\newtheorem{theorem}{Theorem}[section]
\newtheorem{lemma}[theorem]{Lemma} 
\newtheorem{proposition}[theorem]{Proposition}
\newlength{\noteWidth}
\long\def\notes#1{\ifinner
	{\footnotesize #1}
	\else 
	\marginpar{\parbox[t]{\noteWidth}{\raggedright\tiny#1}}  
	\fi\typeout{#1}}
\def\Bss{\textrm{g}}
\def\Sens{\mathcal{S}}
\Crefname{corollary}{Corollary}{Corollaries}
\Crefname{eqnarray}{eq.}{eqs.}
\Crefname{equation}{eq.}{eqs.}
\Crefname{figure}{Fig.}{Figs.}
\Crefname{tabular}{Tab.}{Tabs.}
\Crefname{table}{Tab.}{Tabs.}
\Crefname{proposition}{Prop.}{Propositions}
\Crefname{theorem}{Thm.}{Thms.}
\Crefname{definition}{Def.}{Defs.} 
\Crefname{section}{Section}{Sections}
\Crefname{lemma}{Lemma}{Lemmas}
\Crefname{assumption}{Assumption}{Assumptions}
\def\urls#1{{\footnotesize\url{#1}}}
 \def\thetaopt{\theta^{\text{\tiny\sf opt}}}
\def\mindex#1{\index{#1}}
\def\baru{\bar{u}}
\DeclareFontFamily{U}{mathx}{\hyphenchar\font45}
\DeclareFontShape{U}{mathx}{m}{n}{<-> mathx10}{}
\DeclareSymbolFont{mathx}{U}{mathx}{m}{n}
\DeclareMathAccent{\widebar}{0}{mathx}{"73}
\def\barUpupsilon{\widebar{\Upupsilon}}
\def\tilnabla{{\widetilde{\nabla}\!}}
 \def\haUpupsilon{\widehat{\Upupsilon}}
\newcommand{\qsaprobe}{{\scalebox{1.1}{$\upxi$}}}  
\newcommand{\bfqsaprobe}{{\scalebox{1.1}{$\bm{\upxi}$}}}  
\def\Obj{\Upgamma}  
\def\LambdaF{\Lambda^{\text{\tiny\sf F}}}
\def\ODEstate{\Uptheta} 
\def\bfODEstate{\bm{\Uptheta}}
\def\bfLambda{\bm{\Lambda}}
\def\haODEstate{\widehat{\Uptheta}}
\def\haDelta{\widehat{\Delta}}
\def\odestate{\upvartheta}
\newcommand{\bbblot}{\raise1pt\hbox{\vrule height .4ex width .4ex depth .05ex}}
\long\def\defbox#1{\framebox[.9\hsize][c]{\parbox{.85\hsize}{%
	\parindent=0pt
	\baselineskip=12pt plus .1pt      
	\parskip=6pt plus 1.5pt minus 1pt 
	#1}}}
\long\def\beginbox#1\endbox{\subsection*{}%
\hbox{\hspace{.05\hsize}\defbox{\medskip#1\bigskip}}%
\subsection*{}}
\def\endbox{}
\def\archival#1{} 
\def\FRAC#1#2#3{\genfrac{}{}{}{#1}{#2}{#3}}
\def\ddt{{\mathchoice{\FRAC{1}{d}{dt}}%
{\FRAC{1}{d}{dt}}%
{\FRAC{3}{d}{dt}}%
{\FRAC{3}{d}{dt}}}}
\def\ddr{{\mathchoice{\FRAC{1}{d}{dr}}%
{\FRAC{1}{d}{dr}}%
{\FRAC{3}{d}{dr}}%
{\FRAC{3}{d}{dr}}}}
\def\ddtp{{\mathchoice{\FRAC{1}{d^{\hbox to 2pt{\rm\tiny +\hss}}}{dt}}%
{\FRAC{1}{d^{\hbox to 2pt{\rm\tiny +\hss}}}{dt}}%
{\FRAC{3}{d^{\hbox to 2pt{\rm\tiny +\hss}}}{dt}}%
{\FRAC{3}{d^{\hbox to 2pt{\rm\tiny +\hss}}}{dt}}}}
\def\ddyp{{\mathchoice{\FRAC{1}{d^{\hbox to 2pt{\rm\tiny +\hss}}}{dy}}%
{\FRAC{1}{d^{\hbox to 2pt{\rm\tiny +\hss}}}{dy}}%
{\FRAC{3}{d^{\hbox to 2pt{\rm\tiny +\hss}}}{dy}}%
{\FRAC{3}{d^{\hbox to 2pt{\rm\tiny +\hss}}}{dy}}}}
\def\half{{\mathchoice{\FRAC{1}{1}{2}}%
{\FRAC{1}{1}{2}}%
{\FRAC{3}{1}{2}}%
{\FRAC{3}{1}{2}}}}
\def\limsup{\mathop{\rm lim{\,}sup}}
\def\argmin{\mathop{\rm arg{\,}min}}
\def\bfX{{\bf X}}
\def\bfmath#1{{\mathchoice{\mbox{\boldmath$#1$}}%
{\mbox{\boldmath$#1$}}%
{\mbox{\boldmath$\scriptstyle#1$}}%
{\mbox{\boldmath$\scriptscriptstyle#1$}}}}
\def\bfPhi{\bfmath{\Phi}}
\def\bfPsi{\bfmath{\Psi}}
\def\haodestate{\widehat{\upvartheta}} 
\def\haqsaprobe{\widehat{\qsaprobe}}
\def\bfmX{\bfmath{X}}
\def\bfmY{\bfmath{Y}}
\def\bfmhhaY{\bfmath{\hhaY}} 
\def\bfmhhaY{\hbox to 0pt{$\widehat{\bfmY}$\hss}\widehat{\phantom{\raise 1.25pt\hbox{$\bfmY$}}}}
\def\haf{{\hat f}}
\def\hag{{\hat g}}
\def\hah{{\hat h}}
\def\hau{{\hat u}}
\def\haA{\widehat A}
\def\tilg{\tilde g}
\def\tilu{\tilde u}
\def\tilF{\widetilde{F}}
\def\clE{{\cal E}}
\def\clL{{\cal L}}
\def\clW{{\cal W}}
\def\clY{{\cal Y}}
\def\clL{{\cal L}}
\def\eqdef{\mathbin{:=}}
\def\Expect{{\sf E}}
\def\epsy{\varepsilon}
\def\formtmp#1#2{{\vskip12pt\noindent\fboxsep=0pt\colorbox{#1}{\vbox{\vskip3pt\hbox to \textwidth{\hskip3pt\vbox{\raggedright\noindent\textbf{#2\vphantom{Qy}}}\hfill}\vspace*{3pt}}}\par\vskip2pt%
\noindent\kern0pt}}
\def\barf{{\widebar{f}}}
\def\barg{{\widebar{g}}}
\def\barh{{\overline {h}}}
\def\barA{{\bar{A}}}
\def\barF{{\bar{F}}}
\def\ass(#1:#2){(#1\ref{#1:#2})}
\def\ritem#1{
\item[{\sf \ass(\current_model:#1)}]
}
\newenvironment{recall-ass}[1]{%
\begin{description}
\def\current_model{#1}}{
\end{description}
}
\def\sq{\hbox{\rlap{$\sqcap$}$\sqcup$}}
\def\qed{\ifmmode\sq\else{\unskip\nobreak\hfil
\penalty50\hskip1em\null\nobreak\hfil\sq
\parfillskip=0pt\finalhyphendemerits=0\endgraf}\fi}
\newcommand{\blot}{\vrule height 1.1ex width .9ex depth -.1ex }
\def\qedb{\ifmmode\blot\else{\vspace{-.2cm}\unskip\nobreak\hfil
\penalty50\hskip1em\null\nobreak\hfil\blot
\parfillskip=0pt\finalhyphendemerits=0\endgraf}\fi}
\newcounter{rmnum}
\newcounter{anum}
\newcommand{\field}[1]{\mathbb{#1}}
\def\Re{\field{R}}
\def\Co{\field{C}}
\def\Expect{{\sf E}}
\def\barAss{\bar{A}_{\sf ss}}
\def\barAsf{\bar{A}_{\sf sf}}
\def\barAfs{\bar{A}_{\sf fs}}
\def\barAff{\bar{A}_{\sf ff}}
\def\Upupsilonss{\Upupsilon^{\sf ss}}
\def\Upupsilonsf{\Upupsilon^{\sf sf}}
\def\Upupsilonfs{\Upupsilon^{\sf fs}}
\def\Upupsilonff{\Upupsilon^{\sf ff}}
\def\barUpupsilonff{\barUpupsilon^{\sf ff}}
 \def\haUpupsilonff{\widehat{\Upupsilon}^{\sf ff}}
\def\clD{\mathcal{D}}
\def\Dh{\clD^h}
\def\Dg{\clD^g}
\def\transpose{{\intercal}}
\def\Real{\text{\rm Re\,}}
\def\argmin{\mathop{\rm arg\, min}}
\def\epsy{\varepsilon}
\def\haF{\widehat{F}}
\def\haY{\widehat{Y}}
\def\hhaY{\hbox to 0pt{$\haY$\hss}\widehat{\phantom{\raise 1.25pt\hbox{Y}}}}
\def\haA{\widehat A}
\def\haY{\widehat Y}
\def\bfPhi{\bfmath{\Phi}}
\def\hahaf{\hat{\hat{f}}}
\def\hahah{\hat{\hat{h}}}
\def\barUpupsilon{\widebar{\Upupsilon}}
\def\tilXi{\widetilde{\Xi}}
\def\prstate{\Upomega}
\def\tilnabla{{\widetilde{\nabla}\!}}
\newsavebox{\junk}
\savebox{\junk}[1.6mm]{\hbox{$|\!|\!|$}}
\def\limsup{\mathop{\rm lim\ sup}}
\def\argmin{\mathop{\rm arg\, min}}
\def\clD{{\cal D}}
\def\clE{{\cal E}}
\def\clL{{\cal L}}
\def\clT{{\cal T}}
\def\clW{{\cal W}}
\def\clY{{\cal Y}}
\newcommand\gobblepars{%
\@ifnextchar\par%
{\expandafter\gobblepars\@gobble}%
{}}
\def\whamrm#1{\smallbreak\pagebreak[3]%
\noindent\text{\rm#1}\ \ \gobblepars}
\def\whamit#1{\smallbreak\pagebreak[3]%
\noindent\textit{#1}\ \ \gobblepars}
\def\wham#1{\smallbreak\pagebreak[3]%
\noindent\textbf{#1}\ \ \gobblepars}
\def\bdd#1{b^{\text{\rm\tiny\ref{#1}}}}
\def\bdde#1{\varrho^{\text{\rm\tiny\ref{#1}}}}
\def\fasttarg{\uplambda^{\!*}} 
\def\Lipf{L_f} 
\def\Lipfasttarg{L_\uplambda} 
\def\bivstate{\textup{\textsf{Y}}}
\def\BIGstate{X}
\def\imp{\text{\rm g}}
\def\clEF{\clE^{\text{\tiny\sf F}}}
\def\clWF{\clW^{\text{\tiny\sf F}}}
\def\tilLambdaF{{\widetilde\Lambda}^{\text{\tiny\sf F}}}
\def\clYn{\mathcal{Y}^{\text{\tiny\sf n}}}  
\def\ctr{\text{\tiny{\sf ctr}}}
\def\barAss{\bar{A}^{\sf s}}
\def\barAsf{\bar{A}^{\sf sf}}
\def\barAfs{\bar{A}^{\sf fs}}
\def\barAff{\bar{A}^{\sf f}}
\def\barAsstar{\bar{A}^{{\sf s}*}}
\def\clYn{\mathcal{Y}^{\text{\tiny\sf n}}}  
\def\ctr{\text{\tiny{\sf ctr}}}
\def\rF{\text{\rm F}}
\def\rG{\text{\rm G}}
\def\rH{\text{\rm H}}
\def\rJ{\text{\rm J}}
\def\rM{\text{\rm M}}
\def\chclYn{\check{\mathcal{Y}}^{\text{\tiny\sf n}}}
\def\cqsaprobe{\check{\qsaprobe}}
\def\witem{\wham{\small$\triangle$}}
\def\whamit#1{\smallbreak\pagebreak[3]%
	\noindent\textit{#1}\ \ \gobblepars}
\def\wham#1{\smallbreak\pagebreak[3]%
	\noindent\textbf{#1}\ \ \gobblepars}
\def\whamrm#1{\smallbreak\pagebreak[3]%
	\noindent{{\upshape\rm#1}}\ \ \gobblepars}
\def\witem{\wham{\small$\triangle$}}
\title{Markovian Foundations for 
	\\
	Quasi-Stochastic Approximation in Two Timescales
	\\[0.7em]
	\Large Extended Version}
\author{Caio Kalil Lauand and Sean Meyn
	\thanks{Financial support from ARO award W911NF2010055     and NSF award    CCF~2306023   
   is gratefully acknowledged.} 
	\thanks{Caio Kalil Lauand and Sean Meyn are with the Department of Electrical and Computer Engineering, University of Florida, Gainesville, FL, USA.
Emails: {\tt\small caio.kalillauand@ufl.edu} and {\tt\small meyn@ece.ufl.edu}}%
}
\begin{document}

\maketitle

\begin{abstract}
Many machine learning and optimization algorithms can be cast as instances of stochastic approximation (SA). The convergence rate of these algorithms is known to be slow, with the optimal mean squared error (MSE) of order $O(n^{-1})$. In prior work it was shown that MSE bounds approaching $O(n^{-4})$ can be achieved through the framework of quasi-stochastic approximation (QSA); essentially SA with careful choice of deterministic exploration. These results are extended to two time-scale algorithms, as found in policy gradient methods of reinforcement learning and extremum seeking control.  The extensions are made possible in part by a new approach to analysis,   allowing for the interpretation of two timescale algorithms as instances of single timescale QSA,
made possible by the theory of negative Lyapunov exponents for QSA.
The general theory is illustrated with applications to extremum seeking control (ESC).

\end{abstract}

\clearpage

\tableofcontents

\clearpage

\section{Introduction}

Stochastic approximation (SA) was born in the pioneering work of Robbins and Monro in the 1950s \cite{robmon51a} and remains a significant topic for research, particularly in the machine learning and optimization communities. Early application to reinforcement learning may be found in \cite{konbor99,tsi94a,royThesis98};
see \cite{bacmou11,moujunwaibarjor20,orvkerprobacluc22}  for more recent advances for applications to machine learning.   

Any SA algorithm is designed to solve a root-finding problem $\barf(\theta^*) = 0$,  in which $\barf: \Re^d \to \Re^d$ may be expressed $\barf(\theta) =   \Expect [f(\theta,\Phi)] $ for $\theta\in\Re^d$ with $\Phi$ a random variable.   
The algorithm of Robbins and Monro obtains estimates   through the recursion,
\begin{equation}
	\theta_{n+1} = \theta_n + \alpha_{n+1} f(\theta_n, \Phi_{n+1}) 
	\, , \quad
	n \geq 0
	\label{e:SAGen}
\end{equation}
where $\{\Phi_{n}\}$ is a sequence of random vectors converging to $\Phi$ (in distribution) and $\{\alpha_n\}$ is a step-size sequence.

The recursion is designed to mimic the  \textit{mean flow}, defined as the ODE $\dot{\odestate}_t = \barf(\odestate_t)$.  Analysis of \eqref{e:SAGen} proceeds by comparing parameter estimates with solutions to the mean flow \cite{bor20a}.   
Common choices for   $\{\alpha_n\}$ include (i) vanishing step-size sequences of the form $\alpha_n = n^{-\rho}, \rho \in (1/2,1]$; and (ii) constant step-sizes, in which $\alpha_n \equiv \alpha>0$.   Under general conditions on  $\barf$ and $\bfPhi$, the choice (i) leads to almost sure convergence of $\{\theta_n\}$ to $\theta^*$ for each initial condition $\theta_0 \in \Re^d$.   For (ii), there is little hope for convergence,  though convergence typically holds for the averaged parameters \cite{bacmou11,bormey00a,laumey23a,bor20a}. 

Without averaging,  either choice of step-size typically results in MSE bounds of  order $\Expect[\| \theta_n - \theta^*\|^2]= O(\alpha_n)$,  and this can be improved to  $O(n^{-1})$ for vanishing step-sizes with $\rho<1$, and even for constant step-size algorithms in special cases  \cite{bacmou11,moujunwaibarjor20,laumey23a}.

Recent results have established much better MSE bounds when $\bfPhi$ is deterministic. In particular, it has been shown in \cite{laumey22e} that MSE rates of order $O(\alpha_n^2)$ can be achieved when $\bfPhi$ is a mixture of sinusoids with carefully selected frequencies. With averaging, these rates are sped up to $O(\alpha_n^4)$. 
Numerical experiments illustrating this substantial acceleration  can be found in \cite[\S 3.2]{laumey22e}.   

This deterministic analogue of SA, known as quasi-stochastic approximation (QSA), is largely motivated by applications such as reinforcement learning and gradient-free optimization, in which  the algorithm designer also designs $\bfPhi$ for the purpose of ``exploration''.

The goal of this paper is to extend the theory of QSA to algorithms in two timescales,   for which the algorithm objective is to solve a pair of root finding problems $ \barg (\theta^*,\lambda^*) = \barh(\theta^*,\lambda^*) =0$,   for a pair of functions  $\barg,\barh: \Re^{2d} \to \Re^d$.   In the singular perturbation approach to analysis,  it is assumed that $\lambda^* =  \fasttarg(\theta^*)$  for a function $\fasttarg \colon\Re^d\to\Re^d$.

\begin{subequations}

	It is assumed that  $\barg$ and $\barh$ are defined as expectations, expressed here in sample path form
	\begin{align}
\barg(\theta, \lambda) &\eqdef \lim_{T \to \infty} \frac{1}{T} \int^{T}_0 g(\theta, \lambda,\qsaprobe_t) \, dt
\label{e:barg_def}
\\
\barh(\theta, \lambda) &\eqdef \lim_{T \to \infty} \frac{1}{T} \int^{T}_0  h(\theta, \lambda,\qsaprobe_t) \, dt
\label{e:barh_def}
	\end{align}
	The \textit{probing signal} $\{\qsaprobe_t\} \subseteq \Re^m$  is  of the form $\qsaprobe_t = G(\Phi_t)$ in which $\bfPhi$ is the state process for a dynamical system, interpreted as a deterministic   Markov process.   
	Justification for the above limits may be found in \cite{CSRL,laumey22d} under the assumptions of the paper.

	\label{e:bargbarh_def}
\end{subequations}

\begin{subequations}

	The  QSA algorithms considered in this paper are expressed in continuous time,  
	\begin{align}
\ddt \ODEstate_t 
&= a_t g(\ODEstate_t,\Lambda_t, \qsaprobe_t)
\label{e:ODEstate_QSA_Gen}
\\
\ddt \Lambda_t 
& = b_t h(\ODEstate_t,\Lambda_t, \qsaprobe_t)
\label{e:Lambda_QSA_Gen}
	\end{align}
	in which  the  gain processes $\{a_t, b_t\}$ are  non-negative.
	
	\label{e:QSA2_Gen} 
\end{subequations}

Three gain choice settings might be considered:
\wham{1. Constant gain:}       $a_t \equiv \alpha$ and $b_t \equiv \beta$,  with $0<\alpha\ll \beta$.
The pair of ODEs   \eqref{e:meanflowTwoTime} is known as a  singular perturbation model \cite{kokorekha99}.     
The analytical approach of 
\cite{laumey22d}
based on the \textit{perturbative mean flow} extends easily to this setting.

\wham{2. Vanishing gain:}   
Both tend to zero,  and  $b_t/a_t\to\infty$ as $t\to\infty$.  This is favored in actor-critic methods and in some applications to optimization \cite{konbor99,kontsi03a,bhafumarwan03,bhafumarfar01,bhafumarbha01}.

\wham{3. Mixed case:}     $a_t$ is vanishing, but $b_t \equiv \beta>0$ is held fixed.   
One example is extremum seeking control, in which the fast ODE emerges as the state of a high pass filter \cite{liukrs12}.   Another example is policy gradient methods for reinforcement learning, in which $\lambda_t$ is the state process of a dynamical system to be controlled \cite{CSRL}.

For any choice of gain, the pair \eqref{e:QSA2_Gen} has the \textit{two-time scale} property: $a_t$ is small compared to $b_t$.    

\begin{subequations}
	
	Analysis for the first two cases is based on a family of mean flow equations:
	\begin{align}
\ddt \odestate_t  & = \barg(\odestate_t,  \fasttarg(\odestate_t) )
\label{e:meanflowSlow}
\\
\ddt \lambda_t^\theta & = \barf(\theta, \lambda_t^\theta  )
\label{e:meanflowFast}
	\end{align}
	The two ODEs do not interact:
	$\theta\in\Re^d$ is held fixed in the \textit{fast} ODE \eqref{e:meanflowFast},  and 
	the \textit{slow} ODE \eqref{e:meanflowSlow} is autonomous, since $\lambda_t$ is replaced by $ \fasttarg(\odestate_t) $.   
	
	\label{e:meanflowTwoTime}
\end{subequations}

The present paper focuses on the mixed-gain setting because the applications are most compelling in current research, and because the analysis is most  interesting.

As in most papers on two time-scale algorithms, we introduce for the purposes of analysis the  
 family of QSA ODEs parameterized by  $\theta\in\Re^d$:
\begin{equation}
	\ddt \Lambda_t^\theta 
	=  \beta h(\theta ,\Lambda_t^\theta, \qsaprobe_t)
	\label{e:Lambda_QSA_frozen}
\end{equation}
Under the assumptions imposed we establish the existence and uniqueness of a steady-state distribution $\upmu_\theta$ for $(\Lambda_t^\theta, \Phi_t)$.

Analysis of the full QSA ODE combines elements of two classical approaches:

\whamit{1. Singular perturbations.}
A family of models is considered, parameterized by small $\beta>0$.   In this case the goal is to establish  
$\Lambda_t \approx \fasttarg(\ODEstate_t)$ for large $t$, along with error bounds.

\whamit{2. Parameter dependent noise.}
\eqref{e:ODEstate_QSA_Gen} is regarded as a single timescale algorithm, in which the driving noise $(\bfLambda,\bfPhi)$  is parameter dependent.  
Its mean flow is defined by
\begin{equation}
	\ddt \odestate_t = \barg_0(\odestate_t)  \, , \quad 
	\barg_0(\theta) = \int g(\theta,\lambda,G(z))  \upmu_\theta(d\lambda, dz)  
	\label{e:barg0}
\end{equation}

\wham{Contributions} The main contributions of the paper are summarized herein.

\whamrm{(i)} 

\Cref{t:PMF} establishes the
\textit{perturbative mean flow} (p-mean flow) representation for the ``fast'' QSA ODE \eqref{e:Lambda_QSA_Gen}:
\begin{equation}
	\begin{aligned}
\ddt \Lambda_t    
&=  
\beta[ \barh (\ODEstate_t,\Lambda_t)  
-  \beta \barUpupsilonff(\ODEstate_t,\Lambda_t) +  \clW_t]
\\
\clW_t  
&=
\sum^2_{i = 0} \beta^{2-i} \frac{d^i}{dt^i} \clW_t^i 
\label{e:Pmeanflow_lambda}
	\end{aligned}
\end{equation}
in which $\{\clW_t^i ,  \barUpupsilonff_t: i = 0,1,2\}$ are smooth functions of time identified in the theorem.
Moreover, conditions are identified under which $\barUpupsilonff$ is identically zero, which has valuable implications to algorithm design.

The representation in \eqref{e:Pmeanflow_lambda}
invites filtering techniques for error attenuation:    the terms $\{\clW_t^i : i =1,2\}$ are zero-mean and can be attenuated through a second order low-pass filter.

\whamrm{(ii)}  
Estimation error bounds are obtained in \Cref{t:ROC}: for a vector $\theta^\beta \in \Re^d$ satisfying   $\| \theta^\beta - \theta^*  \| = O(\beta) $,
\[
\begin{aligned}
	 \|   \ODEstate_t - \theta^\beta  \| = O(a_t)
	\\
	\limsup_{t \to \infty} \|   \Lambda_t - \fasttarg(\theta^*)  \| = O(\beta)
\end{aligned}
\]

\whamrm{(iii)} 
The introduction of filtering in \Cref{t:ROCaveraging} yields attenuation of the estimation errors:  for a vector $\theta^\beta \in \Re^d$ satisfying   $\| \theta^\beta - \theta^*  \| = O(\beta^2) $ and a sequence $\{ \LambdaF_t\}$ obtained from passing $\{\Lambda_t\}$ through a low-pass filter,
\[
\begin{aligned}
	\|   \ODEstate_t - \theta^\beta  \| = O(a_t)
	\\
	\limsup_{t \to \infty} \|   \LambdaF_t - \fasttarg(\theta^*)  \| = O(\beta^2)
\end{aligned}
\]

\whamrm{(iv)}   Portions of the results given by \Cref{t:ROC} and \Cref{t:ROCaveraging} are based on the justification of the autonomous ODE \eqref{e:barg0} as an approximation to \eqref{e:ODEstate_QSA_Gen}.   
Theory is based on Lyapunov exponents to establish the existence of unique invariant measures $\{ \upmu_\theta : \theta\in\Re^d\}$, and solutions to Poisson's equation for  $\bfPsi^\theta =(\bfLambda^\theta,\bfPhi)$.  Criteria and consequences of a negative Lyapunov exponent are contained in   \Cref{t:fish0}.

\whamrm{(v)}  Examples in \Cref{s:ex} illustrate application of the general theory in (i)--(iv).

Extremum seeking control (ESC) is given as an example of mixed-gain two timescale QSA, for which the mean vector fields $\barg$ and $\barg_0$ are identified. The standard ESC algorithm and the 1SPSA  algorithm of Spall \cite{spa03} are not globally stable in general,  even when gradient descent is stable, because $\barg_0$    is not Lipschitz continuous;  it is pointed out in  \cite{laumey22d}  that we can expect finite escape time when the objective $\Obj$ is a coercive quadratic.  It is shown in  \cite{laumey22d} that Lipschitz continuity and hence global stability can be assured through  the introduction of a state dependent ``exploration gain''.    Extension to the two timescale setting is presented in \Cref{s:ESC}.

Note that in this paper as well as in much of the RL literature, the goal is to estimate a near optimal policy based on training data.   Hence we are not considering the online optimization approach to control, which requires   non-vanishing $\{a_t\}$  (see     
\cite{coldalber20,haubolhugdor21} and their references).

\wham{Literature Survey}

Research in singular perturbation theory  was extremely active within the control systems community in the 1970s, later serving as a foundation for adaptive control. See \cite{smi85} for its century long history and \cite{kokmalsan76,kha02,kokorekha99,sanver07} for  more comprehensive literature surveys on the topic.

Almost sure convergence of $\{\theta_n\}$ to $\theta^*$ for two timescale SA  was established in \cite{bor97a} under the assumption that the sequence of estimates is uniformly bounded almost surely. Bounds on the MSE appeared soon after for the special case of linear SA \cite{kontsi04}. Extensions to non-linear recursions were presented in \cite{mokkpel06}, while criteria for boundedness of estimates 
	 appeared in \cite{lakbha17}. To the best  of our knowledge, the first appearance of two timescale SA with $\bfPhi$ deterministic is the gradient free optimization algorithm in \cite{bhafumarfar01}.

Gradient free optimization methods concern the estimation of $\thetaopt \in \argmin \Obj(\theta)$ based solely on evaluations of the function $\Obj\colon\Re^d\to\Re$, without access to its gradient. A solution based  on stochastic approximation  (SA)  was proposed in the early 1950s by Kiefer and Wolfowitz and refinements  followed over the years \cite{kiewol52,spa03}.
ESC theory followed a parallel development, and in fact was born far before the  introduction of  SA    \cite{EShistory2010,liukrs12}.

The  introduction of tools from the Markov processes literature to QSA  began in \cite[Ch.~4]{CSRL},   and matured significantly in  \cite{laumey22e,laumey22d} following the discovery of conditions to ensure existence of well behaved solutions to Poisson's equation.   This led to the p-mean flow in \cite{laumey22d},  which is a refinement of the noise decomposition of \cite{metpri84,metpri87} based  upon Poisson's equation for Markov chains.

A function analogous to $\barUpupsilonff$ also appears in the p-mean flow for single timescale QSA. It is shown in \cite{laumey22d,laumey23a,laumey23b} that this term is not only a major source of estimation error

in constant gain algorithms, but also may slow down convergence rates when the gain is vanishing.

\wham{Organization} This paper is organized into three additional sections.
\Cref{s:main} covers the notational conventions and assumptions imposed throughout the paper, while also presenting contributions (i)--(iv). \Cref{s:ex} contains examples based upon the mixed gain algorithms that are the focus of this paper. 
Conclusions and directions for future research are contained in \Cref{s:conc}. Proofs and sketches of proofs for some of the main results are given in the Appendix.

	\section{Two Timescale Quasi-Stochastic Approximation}
	\label{s:main}

	\subsection{Assumptions}

	The  $m$-dimensional probing signal is assumed to be of the form 
	$\qsaprobe_t  = G_0(\qsaprobe _t^0  )$
	in which $G_0\colon\Re^K \to\Re^m$ is continuous,  and  
	\begin{equation}
\qsaprobe _t^0  =  [  \cos (2\pi [\,  \omega_1 t  +  \phi_1 ] ) ;\,  \dots  ;\,    \cos (2\pi [\,  \omega_K t  +  \phi_K ] )  ]
\label{e:qsaprobe0}
	\end{equation} 
	Theory requires an alternative representation, and stronger assumptions:  throughout the paper we take 
	$\qsaprobe_t = G(\Phi_t)$, in which $\bfPhi$ is the $K$-dimensional clock process that evolves in a compact set of the Euclidean space denoted $\prstate \subset \Co^K$. 
	It has entries $\Phi_t^i =   \exp(2\pi j[\omega_i t + \phi_i])$ for each $i$ and $t$ and is   the state process for a dynamical system $\ddt\Phi_t = W \Phi_t $, where $W \eqdef  2\pi j\text{\rm diag} (\omega_i)  $.

	The following compact notation is adopted for   \eqref{e:QSA2_Gen}:  
	\begin{align}
\ddt \BIGstate_t &= \Bss_t f(\BIGstate_t,\qsaprobe_t) 
\, , \quad 
\Bss_t = \begin{bmatrix}
	a_t I & 0
	\\
	0 & \beta  I
\end{bmatrix}
\label{e:QSAODE_gen_f}
\\
\barf(x) &\eqdef \lim_{T \to \infty} \frac{1}{T} \int^T_0 f(x,\qsaprobe_t) \, dt
\label{e:barfdef} 
	\end{align}
	We assume that   $\barf(x^*) = 0$ with $x^* = (\theta^*;\fasttarg(\theta^*))$.

	The assumptions on $\bfqsaprobe$ and other assumptions are summarized as follows:

	\wham{(A0i)}     
	$\qsaprobe_t  = G_0(\qsaprobe _t^0  )$ for all $t$,   with $\qsaprobe _t^0$ defined in \eqref{e:qsaprobe0},  and the function 
	$G_0\colon\Re^K \to\Re^m$ is   analytic.

	\wham{(A0ii)}   
	The frequencies $\{\omega_1\,,\dots\,,  \omega_K\}$ are distinct, 
	of the form
	$\omega_i   = \log(a_i/b_i) > 0$ and with   $\{a_i,b_i\}$   distinct positive integers.

	\wham{(A1)}
	$a_t = (1+t)^{-\rho}$ with $\half < \rho < 1$ and $b_t \equiv \beta>0$.  
	\wham{(A2)} 
	The functions $f$ and $\barf$ are Lipschitz continuous:  for a constant $\Lipf <\infty$ and all $x', \, x  \in\Re^{2d}\,,  \qsaprobe, \qsaprobe' \in\Re^m $,
	\begin{align*} 
\|f(x',\qsaprobe) - f(x,\qsaprobe)\|
&\le
\Lipf \|x' - x\| 
\\
\|f(x,\qsaprobe') - f(x,\qsaprobe)\| 
&\le
\Lipf \| \qsaprobe'-\qsaprobe\|
\\
\|\barf(x') - \barf(x)\| 
&\le
\Lipf \|x' - x\|
	\end{align*}

	\wham{(A3)} For each $\theta \in \Re^d$, the ODE $\ddt\lambda^\theta_t  =  \barh(\theta,\lambda_t^\theta)$ has a unique globally asymptotically stable equilibrium $\fasttarg(\theta)$, where $\fasttarg: \Re^d \to \Re^d$ satisfies, for a constant $\Lipfasttarg< \infty$,
	\[
	\| \fasttarg(\theta) -\fasttarg(\theta') \| 
	\leq
	\Lipfasttarg \| \theta -\theta' \|
	\, , \quad \theta,\theta' \in \Re^d 
	\]  
	Moreover, the ODE $\ddt\odestate_t  =  \barg(\odestate_t,\fasttarg(\odestate_t))$ has a unique globally asymptotically stable equilibrium $\theta^*$.

\wham{(A4)}  $\limsup_{t\to\infty} \|\BIGstate_t \|\leq b^\bullet < \infty$.

\wham{(A5)}	The vector fields $f$ and $\barf$ are each twice continuously differentiable.  
Moreover, the matrices    $\barAsstar \eqdef \partial_\theta \barg(\theta^*,\fasttarg(\theta^*))$ and
$\{ \barAff(\theta) \eqdef \partial_{\lambda}\barh(\theta, \fasttarg(\theta)) : \theta \in \Re^d\}$ are assumed Hurwitz.

\wham{(A6)}	There exist functions $V^\circ\colon \Re^d \to \Re_+$ and $V^\bullet\colon \Re^d \to \Re_+$ with bounded gradients satisfying the following bounds for each $\theta, \lambda \in \Re^d$:
\[
\begin{aligned}
	\partial_\lambda V^\bullet(\theta,\lambda) \cdot \barh(\theta,\lambda) 
	&\leq - \| \lambda -  \fasttarg (\theta) \|^2
	\\
	\partial_\theta V^\circ(\theta) \cdot \barg(\theta,\fasttarg(\theta))   &\leq -  \| \theta - \theta^*\|^2
\end{aligned}
\]

Assumptions (A1)--(A6) are variations on standard assumptions in the stochastic approximation literature \cite{bor20a,laumey23a,chedevborkonmey21}. The ultimate boundedness assumption (A4) will follow from the other assumptions.

Assumption (A0) is far from standard.  It is required to obtain error bounds in the main results of the paper.

\wham{Markovian foundations}

The theory in this paper rests on recognition that  $\bfPhi$ is a Markov process.   Some key tools from the theory of Markov processes are firstly ergodicity:    $\bfPhi$ admits a unique invariant measure $\uppi$, 
the uniform distribution on $\prstate$.    These observations justify the law of large numbers (LLN) \eqref{e:bargbarh_def}---see  \cite{CSRL,laumey22d}.

Many of the results of this paper rest on solutions to  Poisson's equation, whose definition depends upon context.   Here we recall the formulation from  \cite{laumey22d}:  Let $\bivstate \eqdef \Re^{2d} \times \prstate$ denote the state space for $(\ODEstate,\Lambda,\Phi)$.  
We denote $\baru(x)  \eqdef \int_\prstate u(x,z) \, \uppi(dz)$  and   $\tilu(x,z) \eqdef u(x,z) - \baru(x)$  for any $(x,z) \in \bivstate$ and  continuous vector-valued function $u $  on $ \bivstate$.      
We say that  $\hau$ is the \textit{solution} to  Poisson's equation 
with \textit{forcing function} $u$ if  
\begin{equation}
	\int^T_0 \tilu(x,\Phi_t) \, dt =    \hau(x , \Phi_0)  -  \hau(x,\Phi_T) 
	\label{e:fish}
\end{equation}
for each $ T\geq 0 $  and $x \in \Re^{2d} $.

Throughout the remaining of the paper, we write $u_t$ instead of $u(\BIGstate_t,\Phi_t)$ for  functions of the larger state process $(\BIGstate_t,\Phi_t)$.

Solutions are not unique, since we may always add a constant to obtain another solution.  Throughout the paper it is assumed that the solution is normalized so that $\int \hau(x,z)\, \uppi(dz) =0$ for each $x$.

\smallskip

If $u$ is smooth in its first variable, then
the directional derivatives  in the directions $g$ and $h$ are denoted
\begin{equation}
	\begin{aligned}
		[ \Dg u ] (x,z )  &= \partial_\theta u\,  (x,z )   \cdot g (x, G(z))
		\\
		[ \Dh u ] (x,z )  &= \partial_\lambda u\,  (x,z )   \cdot h (x, G(z))
		\, , \quad (x,z) \in \bivstate
	\end{aligned}
	\label{e:Dir_deriv}
\end{equation}

\whamit{Assumption A0 and Poisson's equation}  

Assumption (A0) may seem overly restrictive,  but to-date this is the only known condition under which we can establish solutions to Poisson's equation, and bounds on these solutions~\cite{laumey22e,laumey22d}.

A second implication of (A0) concerns the nuisance term $\barUpupsilonff$ appearing in 
\eqref{e:Pmeanflow_lambda}.  Results in \cite{laumey22e,laumey22d} may be extended to the setting of this paper to conclude that  $\barUpupsilonff(x) \equiv 0$ for each $x$ under (A0),  with (A0ii) of particular importance (as counter-examples in this prior work show).

The function $\barUpupsilonff$ is constructed based on the solution to Poisson's equation with forcing function $u\equiv f$.   The solution $\haf$  shares the same smoothness properties as $f$;  its  Jacobian with respect to $x$,  denoted  $\haA(x ,z)  \eqdef  \partial_x \haf\, (x, z)$, is  a uniformly bounded function on $\bivstate$.

Denote $\Upupsilon(x,z) \eqdef   
- \haA   (x,z )    f (x, G(z))$.   This is a function $\Upupsilon\colon \bivstate \to  \colon\Re^{2d}$ 
which admits the representation,
\begin{equation}
	\begin{aligned}
		\Upupsilon(x,z) 
		& =
		\begin{bmatrix}
			\Upupsilonss(x,z) + \Upupsilonfs(x,z)
			\\
			\Upupsilonsf(x,z) + \Upupsilonff(x,z)
		\end{bmatrix} 
		\\
		\text{where }\quad \Upupsilonss&= - [\Dg \hag] \, , \quad \Upupsilonsf= - [\Dg \hah]
		\\
		\Upupsilonfs&= - [\Dh \hag] \, ,  \quad \Upupsilonff= - [\Dh \hah]
	\end{aligned}	
	\label{e:Up_newnotation}
\end{equation}
Denoting $\barUpupsilon (x)  = \int \Upupsilon(x,z)  \, \uppi(dz)$ for $x\in\Re^{2d}$,   which has a similar decomposition,    the 
term $\barUpupsilonff$ appearing in 
\eqref{e:Pmeanflow_lambda}  is precisely $\barUpupsilonff  = \int \Upupsilonff(x,z)  \, \uppi(dz)$.

\subsection{Main Results}

The p-mean flow representation for the fast QSA ODE is described first, in which the terms are defined based on $\haf$ and $\Upupsilon$.   The functions $\Upupsilon$ and $\haf$ are themselves treated as forcing functions in Poisson's equation, with solutions  denoted $\hahaf$ and $\haUpupsilon$, respectively.

\begin{subequations} 
	\begin{theorem}[Perturbative Mean Flow]
		\label[theorem]{t:PMF}
		Suppose that (A0)-(A2) hold.    
		Then, 	the p-mean flow representation \eqref{e:Pmeanflow_lambda} holds with
		$\barUpupsilonff\equiv 0$,  $ \clW_t^2   \eqdef   \hahah_t  $,   
		\begin{align}
			\clW_t^0    
			& \eqdef  -  [\Dh \haUpupsilonff ]_t  
			+
			\frac{a_t}{\beta^2} \Bigl[ r_t  [\Dg \hahah ]_t 
			-  \Upupsilonsf_t  -  
			\beta [\Dg \haUpupsilonff ]_t \Bigr]		
			\label{e:AllTheNoise0}   
			\\
			\clW_t^1    
			& \eqdef  -[\Dh \hahah ]_t   +    \haUpupsilonff_t    
			- \frac{a_t}{\beta}   [\Dg \hahah ]_t   
			\label{e:AllTheNoise1}
		\end{align}  
		\label{e:BigGlobalODE}%
		where $r_t \eqdef \rho/(1+t)$.  
	\end{theorem}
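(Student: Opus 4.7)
The plan is to derive the representation by applying Poisson's equation iteratively, decomposing the noise in $\beta h_t$ into residuals that carry either higher powers of $\beta$, a factor of the vanishing gain $a_t$, or a total time derivative that can later be absorbed by filtering. The starting identity is the chain-rule differentiation of $\hah_t = \hah(\BIGstate_t,\Phi_t)$: since $\partial_z \hah \cdot W\Phi_t = -\tilh$ by Poisson's equation, one obtains
\[
\tilh_t \,=\, -\ddt\hah_t + a_t[\Dg\hah]_t + \beta[\Dh\hah]_t.
\]
Multiplying by $\beta$ and using $\ddt\Lambda_t = \beta\barh_t + \beta\tilh_t$ yields a first decomposition exhibiting $\beta\tilh_t$ as a time-derivative part, a cross term of order $a_t\beta$, and a term of order $\beta^2$.

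I would next apply a second round of Poisson's equation to the two nontrivial pieces. Writing $[\Dh\hah]_t = -\Upupsilonff_t = -\barUpupsilonff_t - \tilUpupsilon^{\sf ff}_t$ and applying Poisson to the zero-mean part $\tilUpupsilon^{\sf ff}$ produces $\haUpupsilonff$ together with the identity $-\beta\tilUpupsilon^{\sf ff}_t = \beta\ddt\haUpupsilonff_t - \beta a_t[\Dg\haUpupsilonff]_t - \beta^2[\Dh\haUpupsilonff]_t$. In parallel, because $\hah$ is mean-zero by normalization, Poisson's equation for $\hah$ yields $\hahah$ with $\hah_t = -\ddt\hahah_t + a_t[\Dg\hahah]_t + \beta[\Dh\hahah]_t$. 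Differentiating once more in $t$ and using $\dot a_t = -r_t a_t$ gives
\[
-\ddt\hah_t \,=\, \tfrac{d^2}{dt^2}\hahah_t + r_t a_t[\Dg\hahah]_t - a_t\ddt[\Dg\hahah]_t - \beta\ddt[\Dh\hahah]_t,
\]
which supplies the anticipated second-order term. Substituting both refinements into the first identity and regrouping by the factor $\beta^{2-i}\tfrac{d^i}{dt^i}$ identifies $\clW_t^0$, $\clW_t^1$, and $\clW_t^2$ with the stated expressions; the $-\tfrac{a_t}{\beta}[\Dg\hahah]_t$ term in $\clW_t^1$ is chosen precisely so that $\beta\ddt\clW_t^1$ absorbs $a_t\ddt[\Dg\hahah]_t$ cleanly.

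The assertion $\barUpupsilonff\equiv 0$ under (A0) will be shown as an extension of the Fourier-analytic vanishing result established for single-timescale QSA in \cite{laumey22e,laumey22d}. Since $\bfPhi$ evolves linearly on a torus and $h$, $\hah$ are analytic in $z$, both admit Fourier expansions indexed by $k\in\intgr^K$, with the Fourier coefficients of $\hah$ obtained from those of $h$ by division by $2\pi j\langle k,\omega\rangle$. Hypothesis (A0ii), combined with the arithmetic choice $\omega_i = \log(a_i/b_i)$ for distinct positive integers $a_i, b_i$, ensures $\langle k,\omega\rangle\neq 0$ for every nonzero $k\in\intgr^K$, so every non-constant Fourier mode is invertible. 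Extracting the zero-mode of $[\Dh\hah]$ then reduces $\barUpupsilonff(\theta,\lambda)$ to a sum of products of Fourier coefficients weighted by inverse frequencies that cancel in conjugate pairs.

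The principal obstacle I anticipate is the careful bookkeeping in the iterated substitution: each appearance of $\dot a_t$ produces a cross term in $r_t$, and matching the expanded expression to the compact form $\clW_t = \sum_i \beta^{2-i}\tfrac{d^i}{dt^i}\clW_t^i$ requires distributing the $r_t a_t[\Dg\hahah]_t$ and $a_t\Upupsilonsf_t$ contributions between $\clW_t^0$ and $\clW_t^1$ with precision. A secondary challenge is adapting the Fourier vanishing argument uniformly in $\lambda$, since $\hah$ depends parametrically on both $\theta$ and $\lambda$; here the smoothness from (A5) and Lipschitz regularity from (A2) must be invoked to ensure that the bounds underlying the single-timescale expansions carry over.
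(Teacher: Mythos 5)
Your argument is essentially the paper's own proof: your chain-rule/Poisson identity is \Cref{t:PMFstep0}, your first decomposition is the $\lambda$-component of \Cref{t:PMFstep1}, the second applications of Poisson's equation to $\hah$ and to $\Upupsilonff$ are \Cref{t:PMFstep2,t:PMFstep3}, and your final regrouping (including the role of the $-\tfrac{a_t}{\beta}[\Dg\hahah]_t$ term and $\dot a_t=-r_ta_t$) is exactly the paper's concluding substitution. The only divergence is that for $\barUpupsilonff\equiv 0$ the paper does not re-derive the Fourier cancellation but invokes the extension of \cite{laumey22e,laumey22d}, so your sketch there is extra detail the paper does not attempt---and its assertion that (A0ii) forces $\langle k,\omega\rangle\neq 0$ for every nonzero $k\in\intgr^K$ should be treated with care, since distinct integer ratios need not be multiplicatively independent.
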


\end{subequations}

\Cref{t:PMF} is one of several steps required in establishing convergence of $\bfODEstate$ and the estimation error bounds in \Cref{t:ROC}.  The bounds that follow  are identical to what is obtained for  single timescale QSA, as well as in the averaging literature \cite{laumey22d,kha02}.     
\begin{theorem}[Convergence]
	\label[theorem]{t:ROC}
	Suppose (A0)--(A6) hold. Then, there exist finite constants $\beta^0, \bdd{t:ROC}$ 
	such that for any $0<\beta \le \beta^0$ there is $\theta^\beta \in \Re^d$ satisfying $\| \theta^* - \theta^\beta  \| \leq \bdd{t:ROC} \beta$, and
	\whamrm{(i)} $ \|  \ODEstate_t - \theta^\beta   \|   \le   \bdd{t:ROC} a_t $ for $t \geq 0$.
	\whamrm{(ii)} $\displaystyle \limsup_{t \to \infty}\|  \Lambda_t - \fasttarg(\theta^*)   \|  \le  \bdd{t:ROC} \beta $.
\end{theorem}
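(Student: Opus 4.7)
The plan is to decouple the two timescales and then reduce everything to single-timescale QSA. First I would use the perturbative mean flow of \Cref{t:PMF}, which under (A0) has $\barUpupsilonff \equiv 0$, to show that the fast iterate satisfies $\|\Lambda_t - \fasttarg(\ODEstate_t)\| = O(\beta)$ on any sufficiently long time horizon. Granted that, I would rewrite the slow ODE as a single-timescale QSA recursion driven by a parameter-dependent noise process whose invariant measure $\upmu_\theta$ is the one built in \Cref{t:fish0}. An implicit-function-theorem argument using Hurwitz-ness of $\barAsstar$ then locates an equilibrium $\theta^\beta$ of the resulting mean flow within $O(\beta)$ of $\theta^*$, and the QSA rate machinery from \cite{laumey22d} delivers (i).

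For (ii), I would work with the Lyapunov function $V^\bullet$ from (A6) and compute $\ddt V^\bullet(\ODEstate_t,\Lambda_t)$ along trajectories of \eqref{e:QSA2_Gen}. The slow contribution $a_t\,\partial_\theta V^\bullet \cdot g(\ODEstate_t,\Lambda_t,\qsaprobe_t)$ is $O(a_t)$ by (A4), and hence asymptotically negligible against the fast dissipation. For the fast contribution I would substitute the p-mean flow representation \eqref{e:Pmeanflow_lambda}: the leading term gives $-\beta\|\Lambda_t - \fasttarg(\ODEstate_t)\|^2$ by (A6), while the noise term $\beta\clW_t$, being a sum of time derivatives of uniformly bounded quantities $\clW_t^1, \clW_t^2$, is absorbed by integration by parts against $\partial_\lambda V^\bullet$ (whose gradient is bounded). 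A comparison-type argument, following the template in \cite{laumey22d}, then yields $\limsup_{t\to\infty} \|\Lambda_t - \fasttarg(\ODEstate_t)\| = O(\beta)$, which combined with Lipschitz $\fasttarg$ and (i) delivers (ii).

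For (i), I would treat \eqref{e:ODEstate_QSA_Gen} as a single-timescale QSA in the slow variable by conditioning on $\Lambda_t$. Using the invariant measure $\upmu_\theta$ of $(\bfLambda^\theta,\bfPhi)$ from \Cref{t:fish0}, I define $\barg_0$ as in \eqref{e:barg0}. Since $\upmu_\theta$ concentrates on a neighborhood of $(\fasttarg(\theta),\uppi)$ of radius $O(\beta)$ (by the fast-scale analysis applied to the frozen ODE \eqref{e:Lambda_QSA_frozen}), a Taylor expansion gives $\barg_0(\theta) = \barg(\theta,\fasttarg(\theta)) + O(\beta)$ uniformly on compact sets, with the correction $C^1$ in $\theta$. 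Since $\barAsstar$ is Hurwitz, the implicit function theorem produces $\theta^\beta$ with $\barg_0(\theta^\beta)=0$ and $\|\theta^\beta-\theta^*\|=O(\beta)$. Applying $V^\circ$ from (A6) together with single-timescale QSA bounds from \cite{laumey22d,laumey23a}, where the Poisson-equation solution for the forcing $(x,z) \mapsto g(x,\lambda,G(z)) - \barg_0(x)$ is constructed via \Cref{t:fish0}, yields $\|\ODEstate_t - \theta^\beta\| = O(a_t)$.

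The hard part is coupling: the fast estimate of $\|\Lambda_t - \fasttarg(\ODEstate_t)\|$ requires $\ODEstate_t$ to move slowly, while the slow estimate requires a tractable reduced noise description, which in turn depends on the fast iterate equilibrating. I expect to need a bootstrap: first derive a rough $o(1)$ fast bound using only (A2)--(A6) and ultimate boundedness (A4); then combine with (A0) and $\barUpupsilonff \equiv 0$ to sharpen the fast bound to $O(\beta)$; finally feed this into the slow analysis. The other delicate point is regularity of $\theta \mapsto \upmu_\theta$ and of the associated Poisson solutions, which must be Lipschitz in $\theta$ so that the integration-by-parts argument yields the sharp $O(a_t)$ slow rate rather than a weaker $O(\sqrt{a_t})$; this is exactly what the negative Lyapunov exponent theory of \Cref{t:fish0} is designed to provide.
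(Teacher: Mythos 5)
Your proposal is correct in outline and, for the core of part (i) and the final assembly of part (ii), follows essentially the paper's route: the paper likewise proves a fast-variable estimate $\|\Lambda_t - \fasttarg(\ODEstate_t)\| = O(\beta)$ (its \Cref{t:ROC_lambda}), reinterprets \eqref{e:ODEstate_QSA_Gen} as single-timescale QSA with mean field $\barg_0$ built from the invariant measures and Poisson solutions of \Cref{t:fish0}, invokes extensions of the single-timescale QSA rate theorems to get $\|\ODEstate_t-\theta^\beta\| = O(a_t)$, and closes (ii) by the same triangle inequality with Lipschitz $\fasttarg$. Where you diverge is in two sub-arguments. First, for the fast bound the paper does not run a Lyapunov/integration-by-parts comparison with $V^\bullet$; it changes variables to $Y_t = \Lambda_t - \beta\hah_t$ (absorbing the oscillatory noise via the Poisson correction from \Cref{t:PMFstep1}), passes to the scaled error $Z^n_t = \beta^{-1}(Y_t - \fasttarg(\ODEstate_{T_n}))$ on blocks $[T_n,T_{n+1}]$ of length $T/\beta$ over which $\ODEstate$ is quasi-static (\Cref{t:ODEstate_static_t}), and iterates a small-gain contraction (\Cref{t:Robust}, \Cref{t:Facts_ZTn}); your "bootstrap" concern is resolved there simply by waiting until $a_{T_{n_0}} \le \beta^2$, not by a two-pass sharpening. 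Your Lyapunov route should also work, but note that the non-derivative term $\clW^0_t$ must be tracked (it is harmless only because it enters at order $\beta^3$) and that (A6) alone gives a drift in $\|\lambda-\fasttarg(\theta)\|^2$, so you still need the quadratic comparability/exponential-stability structure the paper invokes before \Cref{t:Robust}. Second, to locate $\theta^\beta$ within $O(\beta)$ of $\theta^*$ the paper does not use the implicit function theorem on $\barg_0$: it proves separately that $\limsup_t\|\ODEstate_t-\theta^*\| = O(\beta)$ by comparing the time-changed slow trajectory with restarted mean flows (\Cref{t:g_vary_coupling}, \Cref{t:Couplesol_timevary_theta}, \Cref{t:limsuptheta}) and then gets $\|\theta^\beta-\theta^*\| = O(\beta)$ by the triangle inequality against part (i). Your IFT route instead requires the uniform expansion $\barg_0(\theta) = \barg(\theta,\fasttarg(\theta)) + O(\beta)$ with a $C^1$ (or at least Lipschitz) correction, i.e.\ quantitative regularity of $\theta\mapsto\upmu_\theta$, which \Cref{t:fish0} does not explicitly supply (it gives existence, uniqueness, and locally Lipschitz Poisson solutions); that is extra work your approach would have to carry, and it is precisely what the paper's triangle-inequality detour buys its way out of. Conversely, your route, if completed, would give slightly more structural information (a genuine root of $\barg_0$ identified perturbatively), whereas the paper only asserts $\barg_0(\theta^\beta)=0$ through the cited single-timescale convergence theory.
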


The estimation error in \Cref{t:ROC}~(ii) may be attenuated via filtering, which in turn implies convergence of $\bfODEstate$ to a value closer to $\theta^*$.   
Next, we consider a second order filter for the fast variable:
\begin{equation}
	\tfrac{d^2}{dt^2}  \LambdaF_t 
	+
	2\gamma\zeta\tfrac{d}{dt}  \LambdaF_t
	+\gamma^2\LambdaF_t  = \gamma^2 \Lambda_t \, , 
	\label{e:2nd_order_filter_FAST}
\end{equation} 
and this is used in the slow dynamics,
\begin{equation} 
	\ddt \ODEstate_t   = a_t g(\ODEstate_t, \LambdaF_t, \qsaprobe_t)
	\label{e:ODEstate_QSA_F}
\end{equation} 
In the following we impose the constraint on the natural frequency, $\gamma = O(\beta)$.

\begin{theorem}[Error Attenuation]
	\label[theorem]{t:ROCaveraging}
	Suppose (A0)--(A6) hold,  and the second-order  filter is chosen subject to the following constraints:  
	the damping ratio $\zeta \in (0,1)$ is independent of $\beta$, 
	and a constant  $\eta>0$ is also fixed to define the natural  frequency, $\gamma = \eta \beta$ for each $\beta$.
	Then, there exists $\beta^0$ such that for any $0<\beta \le \beta^0$ there is $\theta^\beta \in \Re^d$ satisfying $\| \theta^* - \theta^\beta  \| \leq \bdd{t:ROCaveraging} \beta^2$, and
	\whamrm{(i)} $ \|  \ODEstate_t - \theta^\beta   \|   \le   \bdd{t:ROCaveraging} a_t $ for $t \geq 0$.
	\whamrm{(ii)} $\displaystyle \limsup_{t \to \infty}\|  \LambdaF_t - \fasttarg(\theta^*)   \|  \le   \bdd{t:ROCaveraging} \beta^2 $.

\end{theorem}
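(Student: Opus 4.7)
The plan is to view the augmented system $(\ODEstate_t, \Lambda_t, \LambdaF_t, V_t, \Phi_t)$ with $V_t = \tfrac{d}{dt}\LambdaF_t$ as a single-timescale QSA in $\ODEstate_t$ driven by the remaining coordinates. The improvement over \Cref{t:ROC} (bias $O(\beta^2)$ in place of $O(\beta)$) comes entirely from the filter attenuating the oscillatory $O(\beta)$ component of $\Lambda_t - \fasttarg(\ODEstate_t)$, leaving only a $O(\beta^2)$ DC bias arising from the quadratic part of $\barh$. The proof proceeds in three steps: (a) characterize the frozen-$\theta$ fast-filter dynamics; (b) produce $\theta^\beta$ via perturbation of an averaged slow vector field; (c) invoke single-timescale QSA to obtain the $O(a_t)$ rate.

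For step (a), fix $\theta$ and apply \Cref{t:PMF} (using $\barUpupsilonff \equiv 0$ under (A0)) to the fast ODE. Writing $\Delta_t = \Lambda_t^\theta - \fasttarg(\theta)$ and Taylor-expanding $\barh(\theta,\cdot)$ around $\fasttarg(\theta)$ gives
\[
\ddt \Delta_t = \beta \barAff(\theta)\Delta_t + \beta R_\theta(\Delta_t) + \beta^3 \clW_t^0 + \beta^2 \tfrac{d}{dt}\clW_t^1 + \beta \tfrac{d^2}{dt^2}\clW_t^2,
\]
where $R_\theta(\Delta) = O(\|\Delta\|^2)$. Two iterations of integration by parts on the derivative noise terms, controlled uniformly via the Poisson-solution bounds of \Cref{t:fish0}, produce the steady-state decomposition $\Delta_t = \beta \tfrac{d}{dt}\clW^2_t + c_\theta + o(\beta^2)$, in which the DC bias $c_\theta$ is determined by the fixed-point relation $\barAff(\theta) c_\theta = O(\beta^2)$ fed by $R_\theta$, so $c_\theta = O(\beta^2)$. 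The filter $H$ with cutoff $\gamma = \eta\beta$ satisfies $|H(i\omega_k)| = O(\beta^2/\omega_k^2)$, hence attenuates the $O(\beta)$ oscillatory component to $O(\beta^3)$ while passing $c_\theta$ essentially unchanged, giving $\LambdaF_t^\theta = \fasttarg(\theta) + O(\beta^2)$ in the $\limsup$.

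For steps (b)--(c), define $\barg_0^F(\theta) = \lim_T T^{-1}\int_0^T g(\theta,\LambdaF_t^\theta,\qsaprobe_t)\,dt$; Taylor expansion of $g$ in $\lambda$ combined with (a) gives $\barg_0^F(\theta) = \barg(\theta,\fasttarg(\theta)) + O(\beta^2)$. Since $\barAsstar$ is Hurwitz by (A5), the implicit function theorem yields $\theta^\beta$ with $\barg_0^F(\theta^\beta) = 0$ and $\|\theta^\beta - \theta^*\| = O(\beta^2)$. View \eqref{e:ODEstate_QSA_F} as a single-timescale QSA with driving process $(\Lambda_t, \LambdaF_t, V_t, \Phi_t)$. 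By \Cref{t:fish0}, the frozen-$\theta$ driver admits a unique invariant measure and well-behaved Poisson solutions (negative Lyapunov exponent follows from (A3), (A5), and the damping $\zeta \in (0,1)$). Standard single-timescale QSA theory (via the p-mean flow applied now to the $\ODEstate$ variable) then yields $\|\ODEstate_t - \theta^\beta\| \leq \bdd{t:ROCaveraging} a_t$. Claim (ii) follows by applying (a) at $\theta = \ODEstate_t$, combining with (i), and invoking Lipschitz continuity of $\fasttarg$.

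The main obstacle lies in step (a): making the $O(\beta^2)$ bias bookkeeping rigorous and uniform in $\theta$. Careful accounting is also required for the filter's tracking lag on the slowly-varying $\fasttarg(\ODEstate_t)$, of order $O(a_t/\gamma) = O(a_t/\beta)$; since $\beta$ is held in $(0,\beta^0]$ this is absorbed into the $O(a_t)$ transient, but one must verify that $\bdd{t:ROCaveraging}$ may be chosen uniformly for $\beta \in (0,\beta^0]$, by ensuring that the implicit-function-theorem construction and the Poisson-solution bounds from \Cref{t:fish0} remain stable as $\beta \to 0$.
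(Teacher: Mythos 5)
Your proposal is correct and rests on the same core mechanism as the paper's proof: the p-mean flow of \Cref{t:PMF} with $\barUpupsilonff\equiv 0$, linearization of $\barh$ about $\fasttarg(\cdot)$ whose quadratic remainder is $O(\beta^2)$ thanks to the $O(\beta)$ bound of \Cref{t:ROC}, attenuation of the zero-mean oscillatory terms by the second-order filter with $\gamma=\eta\beta$, and the single-timescale QSA view of the slow variable for the $O(a_t)$ rate. The only real difference is organizational: you freeze $\theta$, extract a DC bias $c_\theta$ and construct $\theta^\beta$ via the implicit function theorem for the filtered averaged field, then transfer to $\theta=\ODEstate_t$, whereas the paper works directly with the time-varying trajectory, writing $\ddt\LambdaF_t = \beta[\barAff(\ODEstate_t)\tilLambdaF_t + \epsy_t + \clEF_t + \clWF_t]$ and bounding the filtered error terms, with the $\barAff(\ODEstate_t)-\barAff(\ODEstate_\tau)$ mismatch (your ``tracking lag'') absorbed into the vanishing term $\epsy_t$ --- which handles uniformly in one stroke the transfer step you flag as the main remaining obstacle.
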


\subsection{Lyapunov exponents and Poisson's equation}
\label{s:rates}
A key step in establishing convergence of $\bfODEstate$ consists of justifying the interpretation of \eqref{e:ODEstate_QSA_Gen} as an instance of single timescale QSA with mean vector field \eqref{e:barg0}. This requires a different formulation of Poisson's equation.    

Consider for each $\theta\in\Re^d$ the ODE \eqref{e:Lambda_QSA_frozen}.
The joint process $\{ \Psi_t^\theta = (\Lambda_t^\theta,\Phi_t) :  t\ge 0\}$ is the state process of a dynamical system (hence a Markov process) for which we might hope to solve Poisson's equation.  

For each $\theta \in \Re^d$, we obtain existence and uniqueness of an invariant measure $\upmu_\theta$ for $\bfPsi^\theta$ in \Cref{t:fish0}, by establishing the existence of a negative Lyapunov exponent  for the system \eqref{e:Lambda_QSA_frozen}. This approach is also used in \cite{laumey22b} to show ergodicity of single timescale QSA with constant gain. The Lyapunov exponent $\clL_\Lambda$ is defined as follows:
\begin{subequations}
	\begin{equation}
	\clL_\Lambda \eqdef \lim_{t \to \infty} \frac{1}{t} \log(\|  \Sens_t \|)
	\label{e:Lyap_exp}
\end{equation}
in which $\{\Sens_t\}$  is known the \textit{sensitivity process} and is defined by
\begin{equation}
	\Sens_t \eqdef \frac{ \partial}{\partial \Lambda_0^\theta} \Lambda_t^\theta 
	\label{e:sensi}
\end{equation}
	\label{e:sensitivity}
\end{subequations}

\Cref{t:fish0} also establishes existence of a solution $\hau_0$ to the following version of Poisson's equation: denoting $\baru_0(\theta)  \eqdef \int u(\theta,\lambda,z) \, \upmu_\theta(d\lambda,dz)$ and   $\tilu_0(x,z) \eqdef u(x,z) - \baru_0(\theta)$  for any $(x,z) \in \bivstate$,
\begin{equation}
	\int^T_0 \tilu_0(\theta,\Psi^\theta_t) \, dt =    \hau_0(\theta , \Psi^\theta_0)  -   \hau_0(\theta , \Psi^\theta_T) \,, \quad T\ge 0\, .  
	\label{e:fish0}
\end{equation}
Once again the solution is assumed normalized, with $\int \hau_0(\theta,\lambda,z) \, \upmu_\theta(d\lambda,dz) =0$ for each $\theta$.

\begin{theorem}
	\label[theorem]{t:fish0} 
	\whamrm{(i)}
	If   (A0)--(A6) hold then there is $\beta_0>0$ 
	and   a continuous function $B\colon\bivstate \to\Re_+$ 
	such that 
	for $0<\beta\le \beta_0$, a constant $\delta>0$ and each initial condition $(\theta,\lambda,z)\in\bivstate$, the following holds:   
	\begin{equation}
		\| \Lambda_t^\theta  -   \Lambda_t^{\theta,0}  \|  \le   B(\theta,\lambda,z)  \exp(-\delta \beta t)
		\label{e:LyapunovExponent}
	\end{equation}
	in which $\Lambda_t^{\theta,0} $ is the solution to \eqref{e:Lambda_QSA_frozen} with initial condition $(\theta,0,z)$.

	\whamrm{(ii)}   
	Suppose that (A0)--(A3) hold, along with \eqref{e:LyapunovExponent},   and the solutions to  \eqref{e:Lambda_QSA_frozen} are bounded in $t$ from each initial condition.
	Then there is a unique invariant measure for $\bfPsi^\theta$ with compact support.
	Moreover, there is a solution $\hau_0$ to \eqref{e:fish0} that is zero mean and locally  Lipschitz continuous,   whenever $u$ is locally  Lipschitz continuous.

\end{theorem}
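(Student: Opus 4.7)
The plan is to show (a) both trajectories $\Lambda_t^\theta$ and $\Lambda_t^{\theta,0}$ enter an $O(\beta)$ tube around $\fasttarg(\theta)$ after time $T_0 = O(1/\beta)$, and then (b) inside that tube their difference decays at rate $\Theta(\beta)$ by averaging against the Hurwitz matrix $\barAff(\theta)$. For (a), the frozen fast ODE is \eqref{e:Lambda_QSA_Gen} with $a_t \equiv 0$, so Theorem~\ref{t:PMF} supplies the representation $\ddt \Lambda_t^\theta = \beta[\barh(\theta,\Lambda_t^\theta) + \clW_t]$ (with $\barUpupsilonff \equiv 0$). Applying the Lyapunov function $V^\bullet(\theta,\cdot)$ from (A6) and using $\partial_\lambda V^\bullet \cdot \barh \leq -\|\lambda - \fasttarg(\theta)\|^2$, integration by parts against the time-derivative structure of $\clW_t$ absorbs the fluctuations into boundary contributions, yielding $V^\bullet(\theta, \Lambda_t^\theta) \leq V^\bullet(\theta,\Lambda_0^\theta) e^{-c\beta t} + O(\beta)$, the desired confinement.

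For (b), by (A5) and the mean value theorem, $\Delta_t \eqdef \Lambda_t^\theta - \Lambda_t^{\theta,0}$ satisfies the linear time-varying ODE $\ddt \Delta_t = \beta M_t \Delta_t$ with $M_t = \int_0^1 \partial_\lambda h(\theta,\, s\Lambda_t^\theta + (1-s)\Lambda_t^{\theta,0},\, \qsaprobe_t)\, ds$. On $\{t \geq T_0\}$, decompose $M_t = \barAff(\theta) + E_t^{\mathrm{osc}} + E_t^{\mathrm{loc}}$, where $E_t^{\mathrm{osc}} \eqdef \partial_\lambda h(\theta,\fasttarg(\theta),\qsaprobe_t) - \barAff(\theta)$ is zero-mean in $t$ and $E_t^{\mathrm{loc}} = O(\beta)$ by the confinement plus Lipschitz continuity of $\partial_\lambda h$ in $\lambda$. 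Solve the Lyapunov equation $P\barAff(\theta) + \barAff(\theta)^{\transpose} P = -I$ (possible by (A5)) and set $W(\Delta) = \Delta^{\transpose} P \Delta$. The oscillatory piece $E_t^{\mathrm{osc}}$ is converted to a total-derivative correction via Poisson's equation for $\bfPhi$ alone (available by (A0)) and absorbed into a perturbed Lyapunov function $W(\Delta) + \beta Q_t(\Delta)$, while $E_t^{\mathrm{loc}}$ contributes at worst $O(\beta^2)\|\Delta\|^2$. For $\beta_0$ small enough the net rate is $\ddt W \leq -\tfrac{1}{2}\beta W$, and integration delivers \eqref{e:LyapunovExponent}, with $B(\theta,\lambda,z)$ inheriting continuity from the bounded Lyapunov/Poisson data and from the time needed for the trajectories to reach the tube.

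\textbf{Proof of (ii).} Boundedness of solutions from each initial condition makes the family of empirical occupation measures $\{T^{-1}\!\int_0^T \delta_{\Psi_t^\theta(\lambda,z)}\, dt\}_{T>0}$ tight, so by Krylov--Bogolyubov $\bfPsi^\theta$ admits at least one invariant measure, compactly supported in the orbit closure. Uniqueness is immediate from \eqref{e:LyapunovExponent}: for bounded Lipschitz $f$ and two invariant measures $\mu_1, \mu_2$,
\[
\Bigl|\textstyle\int f\, d\mu_1 - \int f\, d\mu_2\Bigr|
= \Bigl|\textstyle\iint [f(\Psi_t^\theta(x)) - f(\Psi_t^\theta(y))]\, \mu_1(dx)\, \mu_2(dy)\Bigr| \longrightarrow 0.
\]
For Poisson's equation, define
\[
\hau_0(\theta,\lambda,z) \eqdef \int_0^\infty \tilu_0(\theta, \Psi_t^\theta(\lambda,z))\, dt.
\]
Using $\int \tilu_0(\theta,\cdot)\, d\upmu_\theta = 0$, rewrite the integrand as an average of the flow difference $\tilu_0(\theta, \Psi_t^\theta(\lambda,z)) - \tilu_0(\theta, \Psi_t^\theta(\lambda',z'))$ against $\upmu_\theta(d\lambda',dz')$; local Lipschitz continuity of $u$, \eqref{e:LyapunovExponent}, and compact support of $\upmu_\theta$ then yield $|\tilu_0(\theta,\Psi_t^\theta(\lambda,z))| \leq C(\theta,\lambda,z)\, e^{-\delta\beta t}$ with $C$ continuous, so the defining integral converges. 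The identity \eqref{e:fish0} reduces to a semigroup substitution, the normalization $\int \hau_0(\theta,\cdot)\, d\upmu_\theta = 0$ follows from Fubini and invariance, and local Lipschitz continuity of $\hau_0$ in $(\theta,\lambda,z)$ follows from differentiating under the integral using the sensitivity bound implicit in (i).

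\textbf{Main obstacle.} The crux is the contraction stage of Part~(i): naive Gronwall on $\ddt \Delta_t = \beta M_t \Delta_t$ only gives Lipschitz growth, and $\Theta(\beta)$-rate contraction must be extracted by averaging against the Hurwitz $\barAff(\theta)$ while simultaneously handling the probing-driven oscillation via Poisson's equation and the $O(\beta)$ residual drift from the confinement stage. Ensuring that the perturbed Lyapunov function $W + \beta Q_t$ stays positive definite, and that $\beta_0$ and $\delta$ can be chosen uniformly on the compact region containing all relevant trajectories, is the most delicate bookkeeping step.
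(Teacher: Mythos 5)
Your part (i) is a plausible self-contained sketch of the negative-Lyapunov-exponent argument that the paper simply imports from its reference (perturbed Lyapunov function plus averaging of the linearization against the Hurwitz $\barAff(\theta)$), and the quantitative details you flag are not the real issue. The genuine gap is in part (ii), in the construction of the Poisson solution. You define $\hau_0(\theta,\lambda,z)=\int_0^\infty \tilu_0(\theta,\Psi_t^\theta(\lambda,z))\,dt$ and justify convergence by writing $\tilu_0(\theta,\Psi_t^\theta(\lambda,z))=\int[\tilu_0(\theta,\Psi_t^\theta(\lambda,z))-\tilu_0(\theta,\Psi_t^\theta(\lambda',z'))]\,\upmu_\theta(d\lambda',dz')$ and invoking \eqref{e:LyapunovExponent}. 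But \eqref{e:LyapunovExponent} contracts two solutions only when they are driven by the \emph{same} clock phase ($z'=z$): for $z'\neq z$ the clock components are rotations whose mutual distance is constant in time (the Lyapunov exponent of $\bfPhi$ is zero), and the corresponding $\Lambda$-components, driven by out-of-phase probing, do not approach one another either. Hence $\tilu_0(\theta,\Psi_t^\theta)$ does not decay along the trajectory; it is a persistent quasi-periodic oscillation (take $u(\theta,\lambda,z)=\lambda$ in the ESC example, where $\tilu_0(\theta,\Psi_t^\theta)$ is the centered output of a stable filter driven by the probing signal), and your defining integral fails to converge. The same phase mismatch breaks your uniqueness argument, which pairs independent draws from $\mu_1$ and $\mu_2$: those trajectories never couple. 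Uniqueness instead requires noting that every invariant measure projects to $\uppi$ (unique ergodicity of the clock under (A0ii)) and then applying \eqref{e:LyapunovExponent} along a common clock path.

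The missing idea is exactly the paper's two-step construction. Coupling from the past produces a stationary solution $\Lambda_t^{\theta,\infty}=\phi_\infty^\theta(\Phi_t)$ that is driven by the \emph{same} clock path as $\Psi_t^\theta$, so \eqref{e:LyapunovExponent} does apply to the difference, and $\tilu_1\eqdef\tilu_0-\tilu_2$, with $\tilu_2(\Phi_t)\eqdef\tilu_0(\theta,\phi_\infty^\theta(\Phi_t),\Phi_t)$, decays exponentially along the trajectory; $\hau_1=\int_0^\infty\tilu_1\,dt$ then converges. The remainder $\tilu_2$ is a function of the clock alone, centered with respect to $\uppi$, and its Poisson equation must be solved in the form \eqref{e:fish}; existence there rests on (A0) and the frequency structure, not on any exponential decay. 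Your proposal has no counterpart of this $\tilu_2$ component, and it cannot be absorbed into the exponential estimate, so the construction of $\hau_0$ (and its local Lipschitz continuity, which you obtain by differentiating a divergent integral) does not go through as written.
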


\def\flow{\phi}

\begin{proof}
	Part (i) follows from the argument in \cite[Section 2]{laumey22b}, based on showing that a Lyapunov exponent is negative for sufficiently small $\beta>0$.
	
	The proof of (ii) is also based on analysis from \cite{laumey22b}:  subject to \eqref{e:LyapunovExponent}, for each $\theta\in\Re^d$ we can apply a technique known as \textit{coupling from the past}   to construct a stationary realization $\{  (\Lambda_t^{\theta,\infty}, \Phi_t^\infty)  :  -\infty < t < \infty \}$.   Its common marginal is $\upmu_\theta$.

	Solutions to Poisson's equation are obtained by examining the construction of this stationary realization:  $\Lambda_t^{\theta,\infty} =   \flow_\infty^\theta (\Phi_t^\infty)$ for each $t$,  for a smooth function $\flow_\infty^\theta$.
	For any initial condition   $\Phi_0 = z$  the process $ \Lambda_t^{\theta,z} =   \flow_\infty^\theta (\Phi_t)$  is a solution to \eqref{e:Lambda_QSA_frozen}, with initial condition $ \Lambda_0^{\theta,z} =   \flow_\infty^\theta (z)\in\Re^d$. 
	
	The construction of $\hau_0$ now proceeds in two steps:  first consider, for each   $\Psi_0^\theta = (\lambda,z)$,
	\[
	\begin{aligned}
		\hau_1(\theta,\lambda,z)  &= \int_0^\infty  \tilu_1(\theta,\Psi_t^\theta)   \, dt \,,   \qquad  \tilu_1   =  \tilu_0 -   \tilu_2
		\\
		\textit{with}
		\quad
		\tilu_2(\Phi_t) &=
		\tilu_0(\theta, \flow_\infty^\theta (\Phi_t),\Phi_t)  
	\end{aligned} 
	\]
	This satisfies Poisson's equation with forcing function $\tilu_1$:   
	$\ddt \hau_1(\theta,\Psi^\theta_t)   =  -  \tilu_1(\theta,\Psi_t^\theta)$ for all $t\ge0$.
	
	Next, let  
	$ \hau_2(\theta,z)$ denote the solution to Poisson's equation in the form  \eqref{e:fish} with forcing function $    \tilu_2  $.         The desired solution to  \eqref{e:fish0} is then $  \hau_0 = \hau_1 - \hau_2$.
\end{proof}

Upon obtaining existence of solutions $\hau_0$ to \eqref{e:fish0}, the interpretation of \eqref{e:ODEstate_QSA_Gen} as an instance of single timescale QSA is then justified based upon the following steps:
\wham{1.} Fixing $\theta \in \Re^d$ and differentiating $\hag_0$ with respect to time yields    
\[
\ddt \hag_0(\theta , \Lambda^\theta_t , \Phi_t)  = \beta \partial_\lambda \hag_0(\theta, \Lambda^\theta_t,\Phi_t) \cdot  h(\theta, \Lambda^\theta_t,\qsaprobe_t) + \partial_\Phi \hag_0(\theta, \Lambda^\theta_t,\Phi_t) \cdot W \Phi_t = - \tilg_0(\theta, \Lambda^\theta_t,\qsaprobe_t)
\]
\wham{2.} The time-derivative when $\bfODEstate$ is time-varying admits a similar expression:   
\[
\begin{aligned}
\ddt \hag_0(\ODEstate_t , \Lambda_t , \Phi_t) &= a_t \partial_\theta \hag_0(\ODEstate_t, \Lambda_t,\Phi_t) \cdot  g(\ODEstate_t, \Lambda_t,\qsaprobe_t) + \beta \partial_\lambda \hag_0(\ODEstate_t, \Lambda_t,\Phi_t) \cdot  h(\ODEstate_t, \Lambda_t,\qsaprobe_t)
+ \partial_\Phi \hag_0(\theta, \Lambda_t,\Phi_t) \cdot W \Phi_t
\\
& =  -\tilg_0(\ODEstate_t, \Lambda_t,\qsaprobe_t)  - a_t \Upupsilon^0_t  \, , \quad \text{in which } \Upupsilon^0_t \eqdef \partial_\theta \hag_0(\ODEstate_t, \Lambda_t,\Phi_t) \cdot  g(\ODEstate_t, \Lambda_t,\qsaprobe_t)
	\end{aligned}
	\]

	\wham{3.}   Step 2 motivates the representation of  \eqref{e:ODEstate_QSA_Gen} as a single timescale QSA ODE:  
	\begin{equation}
	\ddt \ODEstate_t = a_t [\barg_0(\ODEstate_t) + \tilXi^0_t] 
	\, , \quad 
	 \tilXi^0_t \eqdef g(\ODEstate_t, \Lambda_t,\qsaprobe_t) - \barg_0(\ODEstate_t) = -\ddt \hag_0(\ODEstate_t , \Lambda_t , \Phi_t) -a_t\Upupsilon^0_t
	 \label{e:1timescale}
	\end{equation}
	 in which $\hag_0$ and $\Upupsilon^0$ are smooth functions of $(\ODEstate_t , \Lambda_t,\Phi_t)$.
	 
\wham{4.} 
The proofs of	
 \cite[Thms. 4.15 \& 4.24]{CSRL} can be extended to \eqref{e:1timescale} to establish the bound $\| \ODEstate_t - \theta^\beta \| \leq \bdd{t:ROC} a_t$, in which  $\theta^\beta \in \Re^d$ satisfies $\barg_0(\theta^\beta) = 0$.

\section{Examples}
\label{s:ex}

\subsection{Linear model} 
\label{s:LinearQSA}

A general model takes the form 
\begin{equation}
	f(\BIGstate_t) = [\barA + A^\circ_t] \BIGstate_t + B_t
	\label{e:linearSA}
\end{equation}
in which the $(2d)\times (2d)$ matrix valued process $\{ A^\circ_t \}$ has zero mean.   Letting $b\in\Re^{2d}$ denote the mean of $\{B_t\}$,  and assuming $\barA$ is invertible,  we have
$
(\theta^*; \lambda^*) = - \barA^{-1} b
$. We take $b =0$ without loss of generality  throughout the remainder of this section.

A single time-scale QSA algorithm is appropriate if $\barA$ is Hurwitz.   
If not, consider the decomposition into four $d\times d$ blocks: in Matlab notation,  $\barA = [\barAss, \barAsf;  \barAfs ,  \barAff]$.     We then have $ \fasttarg(\theta) = - [\barAff]^{-1}\barAfs \theta$,  and 
\eqref{e:meanflowSlow} becomes the $d$-dimensional ODE,  $\ddt \odestate   =  A^\bullet \odestate$, with $A^\bullet = \barAss - \barAsf [\barAff]^{-1}\barAfs$.
Success of the two timescale algorithm requires that each of the two matrices
$A^\bullet$, $\barAff$ be Hurwitz.

The vector field \eqref{e:linearSA} differs from  the linear model of \cite{kontsi04} because there is multiplicative noise, which significantly complicates analysis: see discussion in \cite{laumey22d}. 

Consider for example the mixed-gain QSA ODE \eqref{e:QSAODE_gen_f} 
with $\barA = [\alpha,\alpha;-2,-1]$, $B_t = [\sin(\omega_1 t),\sin(\omega_2 t)]^\transpose$ and $A_t =  I B_t$.

Hence, $\barf(x) = \barA x$ and $x^* =0$. The matrix $\barA$ is Hurwitz only for $0<\alpha<1$. When $\alpha>1$,
 the benefits of a two timescale algorithm for stabilization become clear: $\fasttarg(\theta) = -2\theta$, giving stable dynamics for the mean flow:
\[
\ddt \odestate_t   = \barg(\odestate_t,  \fasttarg(\odestate_t) )  = - \alpha  \odestate_t
\]

\def\rF{\text{\rm F}}
\def\rG{\text{\rm G}}
\def\rH{\text{\rm H}}
\def\rJ{\text{\rm J}}
\def\cqsaprobe{\check{\qsaprobe}}

\subsection{Extremum-seeking control}
\label{s:ESC}

This approach to  gradient-free optimization begins with the construction of approximate gradients  
$ \{ \tilnabla_t\Obj : t\ge 0 \}$  based on perturbed observations  of the form $\clY_t \eqdef \clY(\ODEstate_t,\qsaprobe_t)= \Obj(\ODEstate_t + \upepsilon_t \qsaprobe_t)$, in which  $\upepsilon_t \equiv \upepsilon(\ODEstate_t) $ is   known as the \textit{probing gain}.  Two possibilities result in a Lischitz QSA algorithm provided $\nabla \Obj$ is Lipschitz continuous:
\begin{equation}
	\begin{aligned}
		\upepsilon(\theta) & = \epsy\sqrt{1+\Obj(\theta)  }   
		\\
		\upepsilon(\theta) &= \epsy\sqrt{1+\| \theta - \theta^\ctr \|^2/{\sigma_p^2}} 
	\end{aligned}
	\label{e:expGain}
\end{equation}
where in the first choice it is assumed without loss of generality that   $\Obj$ takes on non-negative values, $\theta^\ctr$ is an a-priori estimate of $\thetaopt$ and $\sigma_p$ plays the role of the standard deviation around this prior.

The observations are normalized
\begin{equation}
	\clYn_t \eqdef \clYn(\ODEstate_t,\qsaprobe_t) =   \frac{1}{\upepsilon_t} \Obj(\ODEstate_t + \upepsilon_t \qsaprobe_t)
	\label{e:normalized_Obs}
\end{equation}
To obtain $\{\ODEstate_{t} \}$, the probing signal and $\{\clYn_t\}$ are fed as input to a sequence of filters \cite{arikrs03}.

The first is a high-pass filter with state process $\Lambda$:
\begin{equation}
	\begin{aligned}
		\ddt \Lambda_t   &=  \rF \Lambda_t + \rG  u_t \, 
		\\
		y_t        & =   \rH ^\transpose  \Lambda_t + \rJ  u_t  \,
	\end{aligned}
	\label{e:HPss}
\end{equation}
with  $(\rF,\rG ,\rH ,\rJ )$  of compatible dimension.  In this equation, $u_t$ is the scalar input and  $y_t$ the scalar output.

The output of \eqref{e:HPss} is expressed $y_t = [\rM u]_t$ with transfer function $\rM(s) \eqdef \rH ^\transpose(Is - \rF )^{-1} \rG + \rJ$. 
We define $\chclYn_t = [\rM\clYn]_t $ and $\cqsaprobe^i_t = [\rM \qsaprobe^i]_t$ for $1\leq i \leq m$, where $\qsaprobe^i_t$ denotes the $i^{\text{th}}$ component of the probing signal.

The $m+1$ outputs of \eqref{e:HPss} are then fed into a low pass filter with state process $\ODEstate$:
\begin{equation}
	\ddt \ODEstate_t =   - a_t[ \sigma ( \ODEstate_t  -\theta^\ctr )  +   a_t     \cqsaprobe_t \chclYn_t]
	\label{e:ESC_LP}
\end{equation}
with $\theta^\ctr $ as defined in \eqref{e:expGain}.

We arrive at a two timescale QSA ODE of the form \eqref{e:QSAODE_gen_f}:
\begin{equation}
	\begin{aligned}
		f(X_t,\qsaprobe^\circ_t)  &=  
		\begin{bmatrix}    
			-  \sigma  I   &  -     \cqsaprobe_t \rH ^\transpose
			\\
			0   &  \rF   
		\end{bmatrix}  X_t   
		+
		\begin{bmatrix}    
			-  \rJ  \cqsaprobe_t   
			\\
			\rG 
		\end{bmatrix}   \clYn_t  
		\\
		\barf(x) &= \begin{bmatrix}    
			-  \sigma  I   &      0
			\\
			0   &  \rF   
		\end{bmatrix}  x
		+ 
		\begin{bmatrix}    
			-  \rJ \,  \Expect[\cqsaprobe \clYn(\theta,\qsaprobe)   ]   
			\\
			\rG \, \Expect[\clYn(\theta,\qsaprobe) ] 
		\end{bmatrix}    
	\end{aligned} 
	\label{e:ESC=QSA}
\end{equation}
in which $\qsaprobe^\circ_t \eqdef (\qsaprobe_t , \cqsaprobe_t)$ is a $2m$-dimensional probing signal and $ \Lambda_t $ is state process in \eqref{e:HPss} with input $u_t=\clYn_t$. 
The expectations in  \eqref{e:ESC=QSA} are taken over $\prstate$, upon recalling that $\qsaprobe_t = G(\Phi_t)$.

From  \eqref{e:HPss} and \eqref{e:ESC=QSA}, we conclude that $\beta =1$ always. Moreover, the vector field $\barg_0$ associated with \eqref{e:barg0} can be identified:
\begin{align}
	\barg_0(\theta) &= -  (\sigma I \theta +
	\Expect[\cqsaprobe \, H^\transpose \Lambda^\theta +
	J \cqsaprobe \, \clYn(\theta, \qsaprobe)] )
	\label{e:meanflow_g0}
\end{align}
in which $\qsaprobe_t = G(\Phi_t)$ and the expectation is taken in steady state and with  respect to the measure $\upmu_\theta$ for  $\Psi^\theta =(\Lambda^\theta,\Phi)$.  Once we establish the conditions of \Cref{t:ROC},  we conclude convergence: $\| \ODEstate_t - \theta^1 \| = O(a_t)$, in which $\barg_0(\theta^1) =0$ and $\|  \theta^1 - \theta^*\| =O(1)$.

It is not difficult to establish that the Lyapunov exponent is negative: 
for each $\theta \in \Re^d$, the ODE \eqref{e:Lambda_QSA_frozen} is  linear with additive disturbance:  
\begin{equation}
	h(\theta,\Lambda_t^\theta,\qsaprobe_t) = 
	\rF \Lambda_t^\theta + \rG   \clYn(\theta,\qsaprobe_t)
	\label{e:ESC_lambda_theta}
\end{equation}
The sensitivity process \eqref{e:sensi} solves $\ddt \Sens_t  = \rF \Sens_t$ with $\Sens_0 = I$.
The Lyapunov exponent \eqref{e:Lyap_exp} can be identified $\clL_\Lambda = \Real(\lambda_1)$, where $\lambda_1$ is an eigenvalue of $\rF$ with maximal real part. Provided $\rF$ is Hurwitz, $\Real(\lambda_1)<0$ and \Cref{t:fish0} can be used to conclude that $\bfPsi^\theta$ admits an unique invariant measure for each $\theta$ as well as existence of solutions to \eqref{e:fish0}.

It remains to show that the mean flow with vector field $\barg_0$ is globally asymptotically stable.     This requires additional assumptions on the objective.  Here we assume that $\nabla\Obj$ is Lipschitz continuous and that its norm is coercive, so that the ODE $\ddt x = \nabla\Obj(x)$ is ultimately bounded.   The same conclusion holds for 
$\ddt \odestate = \barg_0(\odestate)$ for sufficiently small $\epsy>0$,  and conditions on the filter.   However, this is only possible when using a state-dependent probing gain.

Analysis of the mean flow begins with a Taylor series approximation of $ \clYn(\theta,\qsaprobe)$ around $\theta$:
\begin{equation}
	\clYn(\theta,\qsaprobe_t) = \frac{1}{\upepsilon(\theta)} \Obj(\theta) + \qsaprobe_t \nabla \Obj(\theta) + O(\upepsilon)
	\label{e:Taylor_noise}
\end{equation}
which applied to \eqref{e:meanflow_g0} gives, with  $ M_0= J\Expect[ \cqsaprobe \qsaprobe] $,   
\begin{equation}
	\barg_0(\theta) = -  (\sigma I \theta +
	\Expect[\cqsaprobe \, H^\transpose \Lambda^\theta] 
	+
	M_0 \nabla\Obj(\theta) + O(\upepsilon)  )
	\label{e:premeanflow}
\end{equation}

It remains to obtain a representation for $\Expect[\cqsaprobe \, H^\transpose \Lambda^\theta] $. In view of \eqref{e:HPss}, we have that for each $\theta$, 
\[
\begin{aligned}
	\rH^\transpose \Lambda^\theta_t &= \int_{-\infty}^t e^{\rF(t-\tau)} \rG \clYn(\theta,\qsaprobe_\tau) \, d\tau
	= \gamma_0 \frac{\Obj(\theta)}{\upepsilon(\theta)}  + \cqsaprobe_t^\transpose \nabla \Obj(\theta)
\end{aligned}
\]
where the last inequality follows from substitution of  \eqref{e:Taylor_noise} and $\gamma_0 = -\rH^\transpose F^{-1} G$ denotes the DC gain of \eqref{e:HPss}. This implies $\Expect[\cqsaprobe \, H^\transpose \Lambda^\theta] = \Sigma_{\tiny \cqsaprobe}  \nabla \Obj(\theta)$ with $\Sigma_{\tiny \cqsaprobe} \eqdef \Expect[\cqsaprobe \cqsaprobe^\transpose]$. 

Together with \eqref{e:premeanflow}, we obtain
\begin{equation}
	\barg_0(\theta) = -  (\sigma I \theta 
	+
	M \nabla\Obj(\theta) + O(\upepsilon)  )
	\, , 
	\quad  
	M = \Sigma_{\tiny \cqsaprobe}  + M_0 
	\label{e:ESCmeanflow}
\end{equation}
Under passivity of \eqref{e:HPss} we have $M + M^\transpose>0$.   Consequently,   the mean flow with vector field \eqref{e:meanflow_g0} is ultimately bounded for sufficiently small $\epsy>0$ in either  of the choices of exploration gain \eqref{e:expGain}.
Heuristic arguments in   \cite{laumey22d} lead to the same approximation as in \eqref{e:ESCmeanflow} and the same stability conclusion provided  the high-pass filter \eqref{e:HPss} is passive.

\section{Conclusions}

\label{s:conc}
This paper extends QSA theory to algorithms with two timescales in the mixed gain setting. The p-mean flow was identified for the fast variable $\Lambda$, inviting filtering techniques for error attenuation.
Also, theory justifying the interpretation of two timescale algorithms as instances of single timescale QSA was introduced. Its implications to establishing rates of convergence for $\{\ODEstate_t\}$ were recognized.

There are several open paths for research:

\witem This paper concerns static root finding problems of the form $\barf(x^*) =0$. It would be exciting to investigate extensions of the p-mean flow to tracking problems, that is, when $\barf$ is a function of time so that the root is time-varying $\{x^*_t\}$.

\witem Establishing rates of convergence for two timescale QSA with vanishing gain is still an open problem. We conjecture that it is simple to extend \Cref{t:ROC} to this case, but can we achieve the $O(b_t^4)$ MSE rates in \cite{laumey22e} for two timescales?

\clearpage

\bibliographystyle{abbrv}
\bibliography{strings,markov,q,QSA,bandits,Twotime} 


\appendix

\clearpage

\centerline{\Large \bf Appendix}

\bigskip

\section{P-mean flow}

The ODE \eqref{e:QSAODE_gen_f} can be expressed in terms its mean vector field:
\begin{equation}
	\begin{aligned}
\ddt \ODEstate_t &= \Bss_t[\barf(\BIGstate_t) + \tilXi_t ]
\, , \quad 
\tilXi_t \eqdef f(\BIGstate_t, \qsaprobe) - \barf(\BIGstate_t)
	\end{aligned}
	\label{e:Apparent}
\end{equation}
where $\tilXi_t = 
\begin{bmatrix}
	\tilXi^\theta_t
	\\
	\tilXi^\lambda_t
\end{bmatrix} = 
\begin{bmatrix}
	g(\BIGstate_t, \qsaprobe) - \barg(\BIGstate_t)
	\\
	h(\BIGstate_t, \qsaprobe) - \barh(\BIGstate_t)
\end{bmatrix} $.

The p-mean flow representation \eqref{e:BigGlobalODE} follows from the next four lemmas.

\begin{lemma}
	\label[lemma]{t:PMFstep0}
	Suppose that for each $(x, \Phi_0) \in \bivstate$ and $t \geq 0$,
\[
\ddt \haF(x,\Phi_t) \eqdef -\tilF(x,\qsaprobe_t) = -  F(x,\qsaprobe_t) + \barF(x)
\]
where $\haF \colon \bivstate \to \Re^d$ is $C^1$. 

Then, on writing $\haF_t = \haF(X_t, \Phi_t)$ , $\tilF_t = \tilF(X_t, \qsaprobe_t)$,
\[
\ddt \haF_t = -\tilF_t + a_t [\Dg \haF](X_t, \Phi_t) + \beta  [\Dh \haF](X_t, \Phi_t)
\]
\end{lemma}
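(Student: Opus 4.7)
The identity is a direct chain-rule computation; the only non-mechanical step is to read off the $\Phi$-directional derivative of $\haF$ from the hypothesis.

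Since $\haF$ is $C^1$ and both $t\mapsto X_t$ and $t\mapsto \Phi_t$ are absolutely continuous, I would first apply the chain rule:
\[
\ddt \haF(X_t, \Phi_t) \;=\; \partial_x \haF(X_t, \Phi_t)\cdot \ddt X_t \;+\; \partial_\Phi \haF(X_t, \Phi_t)\cdot \ddt \Phi_t.
\]
For the $X_t$-contribution I would substitute the QSA ODE \eqref{e:QSAODE_gen_f}, $\ddt X_t = \Bss_t f(X_t,\qsaprobe_t)$, and exploit the block structure $\Bss_t = \operatorname{diag}(a_t I,\beta I)$ together with the block decomposition $f=(g,h)$. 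By the definitions in \eqref{e:Dir_deriv},
\[
\partial_x \haF(X_t, \Phi_t)\cdot \Bss_t f(X_t,\qsaprobe_t)
\;=\; a_t\,\partial_\theta \haF(X_t,\Phi_t)\cdot g(X_t,\qsaprobe_t) \;+\; \beta\,\partial_\lambda \haF(X_t,\Phi_t)\cdot h(X_t,\qsaprobe_t)
\;=\; a_t[\Dg \haF](X_t,\Phi_t) + \beta[\Dh \haF](X_t,\Phi_t).
\]

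For the $\Phi_t$-contribution I would freeze the first argument of $\haF$. Fix $x\in\Re^{2d}$ and an arbitrary initial condition $\Phi_0\in\prstate$. The hypothesis, evaluated at $t=0$, reads
\[
\ddt \haF(x,\Phi_t)\Big|_{t=0} \;=\; -\tilF(x,G(\Phi_0)).
\]
On the other hand, applying the chain rule to $t\mapsto \haF(x,\Phi_t)$ with $x$ held fixed and using $\ddt\Phi_t = W\Phi_t$ gives
\[
\ddt \haF(x,\Phi_t)\Big|_{t=0} \;=\; \partial_\Phi \haF(x,\Phi_0)\cdot W\Phi_0.
\]
Because $\Phi_0$ ranges over all of $\prstate$, equating these two expressions yields the pointwise identity
\[
\partial_\Phi \haF(x,\Phi)\cdot W\Phi \;=\; -\tilF(x,G(\Phi)),\qquad (x,\Phi)\in\bivstate.
\]
Substituting $x = X_t$ and $\Phi = \Phi_t$, and using $\ddt\Phi_t = W\Phi_t$ once more, gives $\partial_\Phi \haF(X_t,\Phi_t)\cdot \ddt\Phi_t = -\tilF_t$. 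Combining this with the expression derived for the $X_t$-contribution completes the proof.

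There is no real obstacle here; the only point that might warrant a second look is the passage from the hypothesis (an orbit-wise statement) to a pointwise identification of $\partial_\Phi \haF\cdot W\Phi$, which is immediate once one notes that the hypothesis is asserted for \emph{every} initial condition $\Phi_0\in\prstate$ and so can be read off at $t=0$.
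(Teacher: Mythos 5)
Your proposal is correct and follows essentially the same route as the paper's proof: a chain-rule decomposition of $\ddt\haF(X_t,\Phi_t)$, with the $X_t$-contribution identified via \eqref{e:QSAODE_gen_f} and \eqref{e:Dir_deriv} and the $\Phi_t$-contribution read off from the hypothesis. Your explicit passage from the orbit-wise hypothesis (valid for every $\Phi_0\in\prstate$) to the pointwise identity $\partial_\Phi\haF(x,\Phi)\cdot W\Phi=-\tilF(x,G(\Phi))$ is a careful spelling-out of a step the paper leaves implicit, not a different argument.
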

\begin{proof}
This follows from the chain rule and the definition of directional derivatives in \eqref{e:Dir_deriv}:
\[
\ddt \haF(X_t,\Phi_t) = 
\tilF(X_t,\qsaprobe_t) + a_t \partial_\theta \haF(X_t, \Phi_t) \cdot g(X_t , \qsaprobe_t) +  \beta \partial_\lambda \haF(X_t, \Phi_t) \cdot h(X_t , \qsaprobe_t) 
\]
\end{proof}

\begin{lemma}
	\label[lemma]{t:PMFstep1}
	Under (A0)-(A2), $\tilXi_t$ admits the representation
	\begin{equation}
\tilXi_t  = -\ddt  \haf_t  -
\begin{bmatrix}
	a_t \Upupsilonss_t + 	\beta \Upupsilonfs_t
	\\
	a_t \Upupsilonsf_t + 	\beta \Upupsilonff_t
\end{bmatrix}
\label{e:step1}
	\end{equation}
in which $\haf$ denotes the solution to Poisson's equation \eqref{e:fish} with forcing function $f$.
\end{lemma}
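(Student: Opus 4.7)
My plan is to invoke \Cref{t:PMFstep0} directly with $F = f$, so that $\haF = \haf$ is the solution to Poisson's equation \eqref{e:fish} with forcing function $f$. For this I first need $\haf$ to be $C^1$ in both arguments; this is guaranteed under (A0)--(A2), as noted in the discussion following \eqref{e:Up_newnotation} (where $\haf$ is said to inherit the smoothness of $f$, and its $x$-Jacobian $\haA$ is shown to be uniformly bounded). Since, by definition, $\tilXi_t = f(X_t,\qsaprobe_t) - \barf(X_t) = \tilf_t$, \Cref{t:PMFstep0} immediately yields
\[
\tilXi_t \;=\; -\ddt \haf_t \;+\; a_t\, [\Dg \haf](X_t,\Phi_t) \;+\; \beta\, [\Dh \haf](X_t,\Phi_t).
\]

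The second step is purely a block-matrix bookkeeping exercise. Write $\haf$ in the block form $\haf = (\hag\,;\,\hah)$ induced by the split $f = (g\,;\,h)$ of the vector field. The directional derivatives $[\Dg \cdot]$ and $[\Dh \cdot]$ in \eqref{e:Dir_deriv} act coordinate-wise, so
\[
[\Dg \haf] \;=\; \begin{bmatrix} [\Dg \hag] \\ [\Dg \hah] \end{bmatrix},
\qquad
[\Dh \haf] \;=\; \begin{bmatrix} [\Dh \hag] \\ [\Dh \hah] \end{bmatrix}.
\]
Comparing against \eqref{e:Up_newnotation}, each of the four block entries is, up to a sign, one of the four components $\Upupsilonss,\Upupsilonsf,\Upupsilonfs,\Upupsilonff$. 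Substituting this identification into the expression above and gathering the $a_t$ and $\beta$ contributions inside a single column vector produces exactly \eqref{e:step1}.

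In short, the argument is a one-line application of the preceding lemma followed by the definition of the $\Upupsilon$-components; there is no real obstacle, and the only thing worth being careful about is the matching of signs (the $\Upupsilon$ terms are defined as the \emph{negatives} of the directional derivatives of $\haf$), which is what turns the $+$ signs in \Cref{t:PMFstep0} into the $-$ signs in the claimed identity.
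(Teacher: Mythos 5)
Your proposal is correct and follows essentially the same route as the paper: apply \Cref{t:PMFstep0} with forcing function $f$ (the paper writes the resulting correction term as $\haA(\BIGstate_t,\Phi_t)\Bss_t f(\BIGstate_t,\qsaprobe_t)$, which equals your $a_t[\Dg\haf]_t+\beta[\Dh\haf]_t$), and then identify the blocks via \eqref{e:Up_newnotation} with the sign flip you note. No gaps.
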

\begin{proof}
	Differentiating $\haf$ with respect to time, we obtain from \Cref{t:PMFstep0},
	\[
	\tilXi_t  =   -\ddt  \haf(\BIGstate_t,  \Phi_t) 
	+    
	\haA( \BIGstate_t,  \Phi_t) \Bss_t f( \BIGstate_t, \qsaprobe_t)  
	\]
	The conclusion \eqref{e:step1} then follows from the definitions in \eqref{e:QSAODE_gen_f} and  \eqref{e:Up_newnotation}.
\end{proof}

\begin{lemma}
	\label[lemma]{t:PMFstep2}
	Suppose that (A0)-(A2) hold. Then,
	\begin{equation}
\ddt \hah_t   =  -r_t a_t   [\Dg \hahah ]_t   
+ a_t \ddt  [\Dg \hahah ]_t 
+ \beta \ddt  [\Dh \hahah ]_t
-  \tfrac{d^2}{dt^2} \hahah_t 
\label{e:step2}
	\end{equation}
	where $r_t = \rho/(t+1)$.
\end{lemma}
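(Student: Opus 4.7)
The plan is to apply \Cref{t:PMFstep0} not to $\hah$ directly, but to the iterated Poisson solution $\hahah$, and then differentiate the resulting identity once more in time. The key observation is that, under the normalization $\int \hau(x,z)\,\uppi(dz)=0$ imposed throughout the paper, we have $\overline{\hah}\equiv 0$. Hence the forcing function in Poisson's equation for $\hahah$ is $\tilde{\hah}=\hah$ itself, and with $x$ fixed the defining relation of $\hahah$ gives
\[
\ddt \hahah(x,\Phi_t) \,=\, -\hah(x,\Phi_t).
\]
This is precisely the hypothesis of \Cref{t:PMFstep0} with $\haF=\hahah$ and $F=\hah$.

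Applying \Cref{t:PMFstep0} then yields
\[
\ddt \hahah_t \,=\, -\hah_t + a_t [\Dg \hahah]_t + \beta [\Dh \hahah]_t,
\]
which I would rearrange to the intermediate representation
\[
\hah_t \,=\, -\ddt \hahah_t + a_t [\Dg \hahah]_t + \beta [\Dh \hahah]_t. \tag{$\ast$}
\]
Differentiating $(\ast)$ once more in $t$ and applying the product rule to the term $a_t [\Dg \hahah]_t$ gives
\[
\ddt \hah_t \,=\, -\tfrac{d^2}{dt^2}\hahah_t + \dot a_t [\Dg \hahah]_t + a_t \ddt [\Dg \hahah]_t + \beta \ddt [\Dh \hahah]_t.
\]
Under (A1), $a_t=(1+t)^{-\rho}$, so $\dot a_t = -\rho(1+t)^{-\rho-1} = -r_t a_t$ with $r_t=\rho/(1+t)$. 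Substituting this into the previous display produces exactly the claimed identity \eqref{e:step2}.

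The main obstacle is the regularity needed to justify two time differentiations of $\hahah_t$ and one time differentiation of the directional derivatives $[\Dg \hahah]_t$ and $[\Dh \hahah]_t$ along the trajectory. This reduces to showing that $\hahah$ is $C^2$ jointly in $(x,z)$, which in turn relies on the smoothness of $h$ from (A2) and (A5) together with the quantitative regularity of Poisson solutions under (A0)--(A2) established in \cite{laumey22e,laumey22d}. Once those regularity properties are in hand, the argument collapses to the two applications of the chain rule above followed by a single application of the product rule.
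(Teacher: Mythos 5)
Your proposal is correct and follows essentially the same route as the paper: apply \Cref{t:PMFstep0} with $\haF=\hahah$ (forcing function $\hah$, which is its own centered version by the normalization $\int\hah(x,z)\,\uppi(dz)=0$), then differentiate the resulting identity once more in $t$ and use $\ddt a_t=-r_t a_t$ under (A1). Your explicit remark on the normalization and on the regularity needed for the second time-differentiation only makes explicit what the paper leaves implicit, so there is nothing to add.
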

\begin{proof}
	Another application of \Cref{t:PMFstep0} with $\haF = \hahah$ gives
	\[
	\begin{aligned}
	\ddt \hahah_t  & =    
	\partial_x \hahah_t \cdot \Bss_t f_t
	-  \hah_t
=   
	a_t [\Dg \hahah ]_t + \beta \ddt [\Dh \hahah ]_t
	-  \hah_t
	\end{aligned}
	\]
	Differentiating both sides with respect to $t$ once more yields \eqref{e:step2}:
	\[
	\begin{aligned}
\tfrac{d^2}{dt^2} \hahah_t  &= 
\ddt\{ a_t [\Dg \hahah ]_t \} + \beta \ddt [\Dh \hahah ]_t  -  \ddt \hah_t   
\\
&= -r_t a_t [\Dg \hahah ]_t+ a_t \ddt [\Dg \hahah ]_t + \beta \ddt [\Dh \hahah ]_t
-  \ddt \hah_t
	\end{aligned}
	\]
\end{proof}

The final lemma is immediate from \Cref{t:PMFstep0}:

\begin{lemma}
	\label[lemma]{t:PMFstep3}
	Suppose that (A0)-(A2) hold. Then,
	\begin{equation}
\Upupsilonff_t = \barUpupsilonff(\BIGstate_t) + a_t [\Dg \haUpupsilonff]_t + \beta [\Dh\haUpupsilonff]_t - \ddt \haUpupsilonff_t
\label{e:step3}
	\end{equation}
\qed
\end{lemma}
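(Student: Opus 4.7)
The plan is to invoke Lemma~\ref{t:PMFstep0} directly, with $F = \Upupsilonff$ and $\haF = \haUpupsilonff$ taken to be the solution of Poisson's equation \eqref{e:fish} whose forcing function is $\Upupsilonff$. Once the hypotheses of that lemma are verified for this choice, the claimed identity \eqref{e:step3} will follow from a single algebraic rearrangement; there is no analytic content beyond the identity already supplied by Lemma~\ref{t:PMFstep0}, which is precisely why the paper labels this step ``immediate''.

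The only non-trivial preparatory work is to confirm that $\haUpupsilonff$ is $C^1$ in its first argument, as required by the hypothesis of Lemma~\ref{t:PMFstep0}. The forcing $\Upupsilonff = -[\Dh \hah]$ is built from $\hah$, the Poisson solution with forcing $h$. Under the smoothness conditions invoked throughout the p-mean flow derivation (in particular (A5), which is what makes $\Upupsilon$ itself continuously differentiable in the definitions preceding \eqref{e:Up_newnotation}), $\hah$ inherits sufficient $x$-regularity that $[\Dh \hah]$ is $C^1$ in $x$. Standard propagation of smoothness through Poisson's equation under (A0) then yields that $\haUpupsilonff$ is $C^1$ in $x$, so the directional derivatives $[\Dg \haUpupsilonff]$ and $[\Dh \haUpupsilonff]$ appearing on the right-hand side of \eqref{e:step3} are well defined.

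With this regularity in hand, the definition of Poisson's equation applied to forcing $\Upupsilonff$ states that, for each fixed $x$,
\[
\ddt \haUpupsilonff(x,\Phi_t) \;=\; -\bigl[\Upupsilonff(x,\Phi_t) - \barUpupsilonff(x)\bigr]
\]
which is exactly the standing hypothesis of Lemma~\ref{t:PMFstep0} with $(F,\haF)=(\Upupsilonff,\haUpupsilonff)$. Evaluating the conclusion of that lemma along the QSA trajectory $(\BIGstate_t,\Phi_t)$ then gives
\[
\ddt \haUpupsilonff_t \;=\; -\bigl[\Upupsilonff_t - \barUpupsilonff(\BIGstate_t)\bigr] + a_t[\Dg \haUpupsilonff]_t + \beta[\Dh \haUpupsilonff]_t \, ,
\]
and isolating $\Upupsilonff_t$ on the right-hand side yields \eqref{e:step3} verbatim. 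The main ``obstacle'' is therefore purely the regularity bookkeeping for $\haUpupsilonff$; once that is settled, the conclusion is a direct transcription of Lemma~\ref{t:PMFstep0}.
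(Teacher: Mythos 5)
Your proof is correct and follows exactly the route the paper intends: the paper states this lemma is immediate from Lemma~\ref{t:PMFstep0}, applied with forcing function $\Upupsilonff$ and Poisson solution $\haUpupsilonff$, followed by the same rearrangement you perform. The only cosmetic remark is that your regularity bookkeeping appeals to (A5), whereas the lemma is stated under (A0)--(A2); the paper obtains the needed smoothness of Poisson solutions from the analyticity in (A0), so your argument is consistent with the paper's (tacit) reasoning.
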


\smallskip

\textit{Proof of \Cref{t:PMF}.} From \eqref{e:step1} we obtain
\[
\tilXi_t^\lambda = - \ddt \hah_t - a_t\Upupsilonsf_t - \beta \Upupsilonff_t
\]
The p-mean flow representation follows from substitution of the results in \Cref{t:PMFstep2,t:PMFstep3}.%
\qed

\section{Estimation Error Bounds}

This section begins with a fact about general ODEs. Consider the system:
	\begin{equation}
\ddt x^\circ_t =  \barf^\circ(x^\circ_t)
\label{e:baseODE}
	\end{equation}
	in which $\barf^\circ: \Re^d \to \Re^d$
	is Lipschitz continuous.

	If \eqref{e:baseODE} is globally exponentially asymptotically stable with equilibrium $x^*$, then there exists a function $V^\circ\colon \Re^d \to \Re_+$ with Lipschitz gradient satisfying, for constants $\delta_1, \delta_2, \delta_3 >0$ and each $x \in \Re^{d}$:
	\begin{subequations}
	\begin{align}
 \delta_1	\| x - x^*\|^2 \leq V^\circ(x) &\leq \delta_2 \| x  - x^* \|^2
 \label{e:ident1}
\\
\nabla V^\circ(x) \cdot \barf^\circ(x) 
&\leq 
-  \delta_3 \|  x  - x^*   \|^2   
	\end{align}
	Taking $V = \sqrt{V^\circ}$, it follows that for a constant $\delta>0, \ddt V(x^\circ_t) \leq -\delta V(x^\circ_t)$. Moreover, $V$ is Lipschitz continuous: for a constant $L_V$ and all $x,x' \in \Re^d$,
	\begin{equation}
|V(x) - V(x')| \leq L_V \|  x - x'  \|
\label{e:LipV}
	\end{equation}
\end{subequations}

The next lemma concerns robustness of perturbed versions of \eqref{e:baseODE} of the following form: 
\begin{equation}
	\ddt x_t = \beta[\barf^\circ(x_t) + w_t] \, , \quad \| w_t \|\leq b^w  +\epsy \|x_t \|
	\label{e:noisyODE}
\end{equation}
with $b^w$ and $\epsy$ positive constants. The following is a variant of the small gain theorem.

\begin{lemma}
	\label[lemma]{t:Robust}
	Suppose that the ODE \eqref{e:baseODE} is globally exponentially asymptotically stable with equilibrium $x^*$.
	Suppose in addition that the bound $ \epsy < \delta $ holds for $\delta$ defined above \eqref{e:LipV} and $\epsy$ in \eqref{e:noisyODE}.
	Then, for each $x_{0} \in \Re^d$, the solution to \eqref{e:noisyODE} satisfies 
	\[
	V(x_t) \leq  \clE_t V(x_0)  + \bdd{t:Robust} (1 - \clE_t) \, , \quad  t>0   
	\]
	in which $\clE_t \eqdef \exp(-  \delta' \beta t)$ and $\bdd{t:Robust} = L_V b^w / \delta'$ with $\delta' = \delta - \epsy$.
\end{lemma}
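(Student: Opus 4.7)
}
The strategy is the classical small-gain / comparison argument: differentiate $V$ along the perturbed trajectory, use the Lyapunov dissipation inherited from the unperturbed ODE together with the Lipschitz bound \eqref{e:LipV} on $V$ to reduce to a scalar linear differential inequality, and then integrate it explicitly. The output of that integration is precisely the claimed bound, with $\delta'=\delta-\epsy$ appearing as the contraction rate and $\bdd{t:Robust}=L_V b^w/\delta'$ as the steady-state offset.

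Concretely, the plan is to proceed in three steps. First, apply the chain rule to $V$ along \eqref{e:noisyODE} to obtain
\begin{equation*}
\ddt V(x_t)=\beta\,\nabla V(x_t)\cdot\barf^\circ(x_t)+\beta\,\nabla V(x_t)\cdot w_t.
\end{equation*}
The hypothesis $\ddt V(x^\circ_t)\le-\delta V(x^\circ_t)$ along base trajectories translates into the pointwise dissipation $\nabla V(x)\cdot\barf^\circ(x)\le-\delta V(x)$, so the drift term contributes $-\beta\delta V(x_t)$. Second, the Lipschitz bound \eqref{e:LipV} yields $\|\nabla V(x)\|\le L_V$ almost everywhere, so the perturbation satisfies $\nabla V(x_t)\cdot w_t\le L_V\|w_t\|\le L_V b^w+L_V\epsy\|x_t\|$; recasting the $\|x_t\|$ term via the coercive lower bound $V\ge\sqrt{\delta_1}\,\|x-x^*\|$ from \eqref{e:ident1} (absorbing the resulting constant into $\epsy$, which is how the statement normalises $\epsy<\delta$) produces the scalar inequality
\begin{equation*}
\ddt V(x_t)\le -\beta\delta' V(x_t)+\beta L_V b^w,\qquad \delta'=\delta-\epsy>0.
\end{equation*}
Third, this is a one-dimensional linear inequality; multiplying by the integrating factor $e^{\beta\delta' t}$ and integrating on $[0,t]$ gives $V(x_t)\le e^{-\beta\delta' t}V(x_0)+\tfrac{L_V b^w}{\delta'}(1-e^{-\beta\delta' t})$, which is exactly the advertised bound with $\clE_t=\exp(-\delta'\beta t)$ and $\bdd{t:Robust}=L_V b^w/\delta'$.

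The main subtlety is a regularity issue: $V=\sqrt{V^\circ}$ is not classically differentiable at the equilibrium $x^*$, so the chain rule used in step one cannot be applied naively there. This is handled by one of the standard devices, any of which suffices: work on the open set $\{x\neq x^*\}$ and extend by continuity using the global Lipschitz property of $V$; invoke the chain rule for Lipschitz functions (Rademacher's theorem plus Clarke subdifferentials) to interpret $\ddt V(x_t)$ as an upper Dini derivative, which is still controlled by the same one-sided estimate; or carry the argument through $V^\circ=V^2$, derive a Grönwall inequality for $V^\circ$, and take a square root at the end. A secondary technical point is the clean absorption of the $\epsy\|x_t\|$ term into the $V$-metric, which is where the assumption $\epsy<\delta$ is essential in keeping $\delta'$ strictly positive and hence keeping the contraction nontrivial.
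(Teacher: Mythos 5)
Your proposal is correct and follows essentially the same route as the paper: chain rule along the perturbed trajectory, the dissipation bound plus Lipschitz continuity of $V$ to reduce to the scalar inequality $\ddt V(x_t)\le -\beta[\delta' V(x_t)-L_V b^w]$, and an integrating factor to conclude. If anything, your absorption of the $\epsy\|x_t\|$ term via the coercive lower bound $V\ge\sqrt{\delta_1}\,\|x-x^*\|$ (with the constant folded into $\epsy$) is more careful than the paper's appeal to $\|x\|\ge V(x)/L_V$, and your remark on the nondifferentiability of $V=\sqrt{V^\circ}$ at $x^*$ addresses a point the paper passes over silently.
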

\begin{proof}
	Assume $x^* = 0$ without loss of generality. 
	In view of \eqref{e:noisyODE} and the bound $\ddt V(x^\circ_t) \leq -\delta V(x^\circ_t)$ below \eqref{e:LipV}, computing the derivative of $V$ with respect to time yields
	\begin{equation}
\begin{aligned}
	\ddt V(x_t) 
	&\leq
	-\beta[ \delta V(x_t) - \nabla V(x_t) (b^w   +  \epsy \|  x_t \|)]
	\\
	& \leq -\beta[  \delta' V(x_t)  - L_V b^w   ]
\end{aligned}
\label{e:ddtV_bound}
	\end{equation}
	where $\delta' \eqdef  \delta -\epsy$. 
	The last inequality follows from Lipschitz continuity of $V$ and and the fact that $V$ is a Lyapunov function for \eqref{e:baseODE}, which implies the upper bound $\| x \| \geq V(x)/L_V$.
	Now let $u_t = \exp(\delta' \beta t)$ for $t>0$.
	The bound \eqref{e:ddtV_bound} implies
	\[
	\ddt(u_t V(x_t)) \leq L_V b^w \beta u_t
	\]
	Integrating both sides of the above inequality from $0$ to $t$, and using the fact that $u_{0} = 1$, results in
	\[
	u_t V(x_t)  - V(x_{0}) \leq 
	L_V b^w  \int^t_{0} \beta  u_r \, dr = \frac{L_V b^w }{\delta'} \int^t_{0} \ddr u_r \, dr
	\]
	which completes the proof with $\bdd{t:Robust} \eqdef L_V b^w / \delta'$.
\end{proof}

\Cref{t:Robust} will be used in establishing the results in \Cref{t:ROC}. Preliminary estimation error bounds on the solutions of \eqref{e:QSAODE_gen_f} are first obtained over finite time intervals of length $T>0$ and later extended to arbitrary $t$.
A sequence of sampling  times $\{T_n\}$ is defined with $T_0=0$ and $T_{n+1} - T_n = T/\beta$ for each $n$. 

All constants in the following depend upon the initial condition $X_0$, but are assumed uniformly bounded on compact sets.

The implications to the fast process $\bfLambda$ are presented first. Denote $\epsy^{\Lambda}_{t} \eqdef \Lambda_t - \fasttarg( \ODEstate_t)$,

\begin{proposition}
	\label[proposition]{t:ROC_lambda}
	Suppose the assumptions of \Cref{t:ROC} hold. Then, there is $\beta^0>0$ such that for any $0<\beta\leq \beta^0$ and $n_0$ satisfying $\beta^2 \geq a_{T_{n_0}}$, the following holds for  each $ \BIGstate_{T_{n_0}} = (\ODEstate_{T_{n_0}};\Lambda_{T_{n_0}}) \in \Re^{2d}$:
	\begin{equation}
\|\epsy^{\Lambda}_{t}  \| \leq  \bdd{t:ROC_lambda} \exp(-\bdde{t:ROC_lambda}  [t- T_{n_0}] ) \| \epsy^{\Lambda}_{T_{n_0}}  \|  + 
\bdd{t:ROC_lambda} \beta \, , \quad t \geq  T_{n_0}
\label{e:exp_contraction}
	\end{equation}
where $\bdd{t:ROC_lambda}$ is a constant depending on $X_0$ but independent of $\beta$. Consequently, there exists a finite time $\clT_0$ depending upon $X_0$ and $\beta$ such that
	\begin{equation} \|  \Lambda_t - \fasttarg(\ODEstate_{t})   \| 
	\leq 
	\bdd{t:ROC_lambda} \beta \, , \quad \text{for all } t\geq \clT_0
	\label{e:preROC_lambda}
\end{equation}
%
\end{proposition}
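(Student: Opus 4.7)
The strategy is to reduce the analysis to an application of \Cref{t:Robust} by combining the p-mean flow of \Cref{t:PMF} with the Lyapunov function $V^\bullet$ from (A6). The key insight is that the derivatives appearing in $\clW_t$ can be absorbed by a change of coordinates, and only then is the remainder in a form where \Cref{t:Robust} applies.

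First I would exploit $\barUpupsilonff\equiv 0$ (furnished by \Cref{t:PMF}) to write the fast dynamics as
\[
\ddt\Lambda_t = \beta\,\barh(\ODEstate_t,\Lambda_t) + \beta\bigl[\beta^2\clW^0_t + \beta\,\ddt\clW^1_t + \tfrac{d^2}{dt^2}\clW^2_t\bigr],
\]
and introduce the auxiliary process $\tilde\Lambda_t \eqdef \Lambda_t - \beta^2\clW^1_t - \beta\,\ddt\clW^2_t$, chosen precisely so that the explicit time derivatives cancel after differentiation. Under the smoothness and boundedness of $\haf$, $\hahah$, $\haUpupsilonff$ guaranteed by (A0), the terms $\clW^0_t, \clW^1_t, \ddt\clW^2_t$ are uniformly bounded along trajectories, so $\|\tilde\Lambda_t - \Lambda_t\| = O(\beta)$ and Lipschitz continuity of $\barh$ from (A2) gives
\[
\ddt\tilde\Lambda_t = \beta\,\barh(\ODEstate_t,\tilde\Lambda_t) + \beta\,e_t,\qquad \|e_t\| \le c_1\beta.
\]

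Next I would run a Lyapunov argument along $(\ODEstate_t,\tilde\Lambda_t)$ using $W_t = \sqrt{V^\bullet(\ODEstate_t,\tilde\Lambda_t)}$. The square root, as in the discussion preceding \Cref{t:Robust}, converts the quadratic dissipation $\partial_\lambda V^\bullet\cdot\barh(\theta,\lambda) \le -\|\lambda - \fasttarg(\theta)\|^2$ from (A6) into exponential contraction at rate $\sim\beta$. Differentiating $W_t$ in time, the dominant contribution is this contraction; the perturbations are $a_t\,\partial_\theta V^\bullet\cdot g(\ODEstate_t,\Lambda_t,\qsaprobe_t) = O(a_t)$ (from the slow drift of $\ODEstate_t$, bounded via (A2), (A4), and the bounded-gradient clause of (A6)) together with $\beta\,\partial_\lambda V^\bullet\cdot e_t = O(\beta^2)$ from the transformed noise. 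Both are $O(\beta^2)$ on $[T_{n_0},\infty)$ by the hypothesis $\beta^2 \ge a_{T_{n_0}}$ and monotonicity of $a_t$. This places the analysis into the exact scope of \Cref{t:Robust}, yielding $W_t \le \exp(-\delta\beta(t-T_{n_0}))W_{T_{n_0}} + \bdd{t:Robust}\beta$; the quadratic sandwich of $V^\bullet$ and the triangle inequality $\|\epsy^\Lambda_t\| \le \|\tilde\Lambda_t - \fasttarg(\ODEstate_t)\| + \|\tilde\Lambda_t - \Lambda_t\|$ then deliver \eqref{e:exp_contraction}. The uniform bound \eqref{e:preROC_lambda} is immediate from \eqref{e:exp_contraction} by choosing $\clT_0$ so that the exponentially decaying transient is dominated by $\beta$.

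The main obstacle I anticipate is verifying that the change-of-coordinates step really leaves only $O(\beta)$ perturbations. The chain-rule contributions that appear when differentiating $\beta^2\clW^1_t$ and $\beta\,\ddt\clW^2_t$ involve products of $a_t$, $\beta$, and derivatives of $\hahah$ and $\haUpupsilonff$ with respect to $(\theta,\lambda,\Phi)$; each such term must be bounded by a constant multiple of $\beta^2$ on $[T_{n_0},\infty)$. This is exactly where the hypothesis $\beta^2 \ge a_{T_{n_0}}$ is used and where (A0) is indispensable, since analyticity of $G_0$ and the Diophantine structure of (A0ii) are what give the bounded, smooth solutions to Poisson's equation whose derivatives this calculation consumes. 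A subsidiary technicality is that the contraction constant in \Cref{t:Robust} depends on the Lipschitz constant of $\sqrt{V^\bullet}$; this is handled by the standard device of replacing $V^\bullet$ by $V^\bullet + \epsy$ for small $\epsy>0$ if strict positivity at the equilibrium is not already built into (A6).
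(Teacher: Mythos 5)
Your opening reduction is sound and is essentially the paper's own first move: the paper sets $Y_t = \Lambda_t - \beta\hah_t$ (via \Cref{t:PMFstep1}) and arrives, exactly as you do, at a perturbed fast ODE $\ddt Y_t = \beta[\barh(\cdot,Y_t) + O(\beta)]$ on the relevant time range, using $\beta^2 \ge a_{T_{n_0}}$ to absorb the $a_t$ terms. The gap is in the contraction step. You extract the exponential decay in \eqref{e:exp_contraction} from $W_t = \sqrt{V^\bullet(\ODEstate_t,\tilde\Lambda_t)}$ and a ``quadratic sandwich'' for $V^\bullet$, but (A6) asserts no such sandwich: it gives only the drift inequality $\partial_\lambda V^\bullet\cdot\barh(\theta,\lambda) \le -\|\lambda-\fasttarg(\theta)\|^2$ together with \emph{bounded} gradients, and a bounded-gradient $V^\bullet$ grows at most linearly, so a global bound $\delta_1\|\lambda-\fasttarg(\theta)\|^2 \le V^\bullet \le \delta_2\|\lambda-\fasttarg(\theta)\|^2$ is impossible; even locally, (A6) does not relate $V^\bullet$ to the error norm. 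Without the two-sided bound you can neither convert the dissipation $-\|\lambda-\fasttarg(\theta)\|^2$ into $-cV^\bullet$ (if, say, $V^\bullet \asymp \|\lambda-\fasttarg(\theta)\|$ near the equilibrium, the drift gives only $O(1/(\beta t))$ decay, not exponential), nor translate $W_{T_{n_0}}$ and $W_t$ back and forth with $\|\epsy^\Lambda_{T_{n_0}}\|$ and $\|\epsy^\Lambda_t\|$. Relatedly, \Cref{t:Robust} presupposes a \emph{globally exponentially} stable, time-invariant nominal field; (A3) gives only global asymptotic stability of $\barh(\theta,\cdot)$, so you cannot invoke it with nominal field $\barh(\ODEstate_t,\cdot)$ as written. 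Adding a constant to $V^\bullet$ does not repair any of this.

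The missing ingredient is (A5), which your argument never uses. The paper obtains the exponential rate by freezing the slow variable on intervals $[T_n,T_{n+1}]$ of length $T/\beta$ (\Cref{t:ODEstate_static_t} shows $\ODEstate$ moves by only $O(Ta_{T_n}/\beta)$ there), forming the scaled error $Z^n_t = \beta^{-1}(Y_t - \fasttarg(\ODEstate_{T_n}))$, and Taylor-expanding $\barh$ about $(\ODEstate_{T_n},\fasttarg(\ODEstate_{T_n}))$ so that the dominant linear part is $\barAff(\ODEstate_{T_n})$, which is Hurwitz by (A5); for this linearized system a norm-equivalent Lyapunov function with the quadratic sandwich does exist, the quadratic remainder enters as a state-dependent perturbation of size $O(\beta^0\min\{\|Z\|,\|Z\|^2\})$ satisfying the small-gain condition of \Cref{t:Robust}, and a per-interval contraction results (\Cref{t:ODE_Z_lambda,t:Facts_ZTn}). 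The hypothesis $\beta^2 \ge a_{T_{n_0}}$ is then consumed a second time, to bound the re-centering error $\|Z^{n+1}_{T_{n+1}} - Z^n_{T_{n+1}}\|$ when the frozen point is updated (\Cref{t:ROC_lambda_Tplusone}); iterating the contraction over $n \ge n_0$ yields \eqref{e:exp_contraction}, and \eqref{e:preROC_lambda} follows as you indicate. To fix your proof you would need either to import this linearization-plus-freezing device (or an equivalent time-varying exponential-stability argument built on (A5)), or to strengthen (A6) to a genuine converse-Lyapunov hypothesis with two-sided norm bounds, which the paper deliberately does not assume.
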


The first step in the proof of \Cref{t:ROC_lambda} is to justify the claim that the slow process $\bfODEstate$ is seen as ``quasi-static'' by $\bfLambda$ over these finite time intervals.

\begin{lemma}
	\label[lemma]{t:ODEstate_static_t}
	Suppose (A1)--(A4) hold. Then, the following holds for a constant $\bdd{t:ODEstate_static_t}$ depending upon $X_0$ but independent of $\beta$:
	\begin{equation}
\| \ODEstate_t - \ODEstate_{T_n} \| \leq \bdd{t:ODEstate_static_t} T \frac{a_{T_n}}{\beta} \, ,
\quad
T_n < t \leq T_{n+1}
\label{e:bound_thetastatic}
	\end{equation}
\end{lemma}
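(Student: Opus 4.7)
The plan is to integrate the slow dynamics \eqref{e:ODEstate_QSA_Gen} on the interval $[T_n,t]$ and exploit three facts: (i) the step-size $a_t$ is non-increasing, (ii) the vector field $g$ evaluated along the trajectory is uniformly bounded, and (iii) the interval length is exactly $T/\beta$.

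First, integrating \eqref{e:ODEstate_QSA_Gen} from $T_n$ to $t$ gives
\[
\ODEstate_t - \ODEstate_{T_n} \;=\; \int_{T_n}^{t} a_s\, g(\ODEstate_s,\Lambda_s,\qsaprobe_s)\,ds,
\]
so by the triangle inequality
\[
\| \ODEstate_t - \ODEstate_{T_n}\| \;\leq\; \int_{T_n}^{t} a_s \,\|g(\ODEstate_s,\Lambda_s,\qsaprobe_s)\|\,ds.
\]
Next, I would bound $\|g(\ODEstate_s,\Lambda_s,\qsaprobe_s)\|$ uniformly in $s$. By (A2), $g$ is Lipschitz in both its state and probing arguments, so it admits the bound $\|g(x,\qsaprobe)\| \leq \|g(0,0)\| + \Lipf(\|x\|+\|\qsaprobe\|)$. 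The probing signal $\qsaprobe_t=G(\Phi_t)$ is bounded because $\Phi_t$ lives on the compact torus $\prstate$ and $G$ is continuous (A0), while by (A4) the full state $\BIGstate_t$ is ultimately bounded, and on any finite initial segment it is bounded by continuous dependence on data. Together this produces a constant $M$, depending on $X_0$ but independent of $\beta$ and $n$, such that $\|g(\ODEstate_s,\Lambda_s,\qsaprobe_s)\|\leq M$ for all $s\geq 0$.

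Finally, since $a_s=(1+s)^{-\rho}$ is monotonically decreasing under (A1), we have $a_s\leq a_{T_n}$ for $s\in[T_n,t]$, and $t-T_n\leq T_{n+1}-T_n = T/\beta$. Combining,
\[
\|\ODEstate_t-\ODEstate_{T_n}\| \;\leq\; M\, a_{T_n}\,(t-T_n) \;\leq\; M\,T\,\frac{a_{T_n}}{\beta},
\]
which yields \eqref{e:bound_thetastatic} with $\bdd{t:ODEstate_static_t}=M$.

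The only subtlety is justifying the uniform-in-$s$ bound on $g$; once (A4) is in hand this is routine. There is no real obstacle here, which is why the lemma is stated as a simple consequence of (A1)--(A4) and sets up the truly substantive arguments (Poisson equation comparisons, p-mean flow inputs) that follow in the proof of \Cref{t:ROC_lambda}.
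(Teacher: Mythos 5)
Your proposal is correct and follows essentially the same route as the paper: integrate the slow ODE over $[T_n,t]$, bound $\|g(\BIGstate_s,\qsaprobe_s)\|$ uniformly on the interval via continuity of $g$ together with (A4) (and compactness of the probing signal), then use monotonicity of $a_t$ and the interval length $T/\beta$. Your added detail on how the uniform bound on $g$ arises (Lipschitz structure from (A2) plus boundedness of $\qsaprobe$) is just a slight elaboration of the paper's one-line continuity argument.
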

\begin{proof}
	By continuity of $g$ and the assumption that $ \limsup_{t \to \infty} \|X_t\| \leq b^\bullet$, it follows that for each $n$ and a constant $\bdd{t:ODEstate_static_t}$ depending upon $X_0$, $ 	\sup_{\nu \in [T_n , T_{n+1}]} \|g(\BIGstate_\nu , \qsaprobe_\nu )\| \leq \bdd{t:ODEstate_static_t}$.
	Integrating both sides of \eqref{e:ODEstate_QSA_Gen} from $T_n$ to $t$ and taking norms gives \eqref{e:bound_thetastatic} since $\{a_t\}$ is positive and decreases monotonically in $t$.
\end{proof}

For each $\ODEstate_{T_n} \in \Re^d$, denote the \textit{scaled error} for $\bfLambda$ as
\begin{equation}
	Z_t^{n} \eqdef \frac{1}{\beta} (Y_t- \fasttarg(\ODEstate_{T_n})) 
	\, , 
	\quad
	\text{with }Y_t \eqdef \Lambda_t - \beta \hah_t
	\label{e:scaled_lambda}
\end{equation}

Establishing that for each $n$ and all $t$, $\| Z^n_t \| \leq b^\diamond$ in which $b^\diamond$ is a constant independent of $\beta$, implies $ \|\Lambda_t - \fasttarg(\ODEstate_{T_n}) \| = O(\beta)$ for $t \in [T_n,T_{n+1}]$.
To obtain that, we exploit the fact that $Z_t^n$ and $Y_t$ are themselves state processes for dynamical systems.

\begin{lemma}
	\label[lemma]{t:ODE_Y_lambda}
	Suppose (A0)--(A6) hold. Then, the following holds for a constant $\bdd{t:ODE_Y_lambda}$ depending upon $X_0$ but independent of $\beta$,
	\begin{equation}
\begin{aligned}
	\ddt Y_t &=   \beta [ \barh(\ODEstate_{T_n},Y_t)  + w^Y_t] \, , \quad T_n < t \leq T_{n+1}
	\\
	\text {where }
	w^Y_t &=   a_t [\Dh \hag]_t  +  \beta [\Dh \hah]_t + \clE^Y_t 
\end{aligned}
\label{e:Y_ODE}
	\end{equation}
	and  $ \|\clE^Y_t  \| \leq \bdd{t:ODE_Y_lambda} (T a_{T_n}
	+ \beta^2 )/\beta$.
\end{lemma}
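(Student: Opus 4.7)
The plan is a direct differentiation of $Y_t = \Lambda_t - \beta \hah_t$ combined with the Poisson-equation identity for $\hah$, followed by a Lipschitz estimate that freezes the slow variable at $T_n$.

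From the fast QSA ODE \eqref{e:Lambda_QSA_Gen} we have $\ddt\Lambda_t = \beta h_t$. Applying \Cref{t:PMFstep0} with $\haF = \hah$ (so that $\tilF = \tilh = h - \barh$) yields
\[
\ddt \hah_t = -\tilh_t + a_t\,[\Dg\hah]_t + \beta\,[\Dh\hah]_t .
\]
Substituting this into $\ddt Y_t = \ddt\Lambda_t - \beta\,\ddt\hah_t$ and using $h_t = \barh(\ODEstate_t,\Lambda_t) + \tilh_t$, the zero-mean fluctuation terms cancel by design---this is precisely the role of the corrector $\beta\hah_t$ in the noise decomposition of \Cref{t:PMFstep1}---leaving an expression of the form
\[
\ddt Y_t = \beta\bigl[\,\barh(\ODEstate_t,\Lambda_t) + a_t\,D^g_t + \beta\,D^h_t\,\bigr],
\]
where $D^g_t$ and $D^h_t$ are first-order directional-derivative terms in $\hah$ that are uniformly bounded along the trajectory.

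The second step freezes the slow variable. Write $\barh(\ODEstate_t,\Lambda_t) = \barh(\ODEstate_{T_n},Y_t) + \Delta_t$ with remainder $\Delta_t := \barh(\ODEstate_t,\Lambda_t) - \barh(\ODEstate_{T_n},Y_t)$, and use Lipschitz continuity of $\barh$ from (A2) to bound $\|\Delta_t\| \le L_f\bigl(\|\ODEstate_t - \ODEstate_{T_n}\| + \|\Lambda_t - Y_t\|\bigr)$. \Cref{t:ODEstate_static_t} bounds the first norm by $\bdd{t:ODEstate_static_t}\, T a_{T_n}/\beta$ on the interval $[T_n,T_{n+1}]$; the second equals $\beta\,\|\hah_t\|$, which is $O(\beta)$ because $\hah$ is continuous and $X_t$ lies in a compact set by (A4). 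Hence $\|\Delta_t\| = O(Ta_{T_n}/\beta + \beta)$. Absorbing $\Delta_t$ together with $D^g_t$ into $\clE^Y_t$, retaining $\beta\,[\Dh\hah]_t$ as the explicit $\beta$-term, and factoring out the leading $\beta$ produces the claimed bound $\|\clE^Y_t\| \le \bdd{t:ODE_Y_lambda}(Ta_{T_n}+\beta^2)/\beta$.

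The main obstacle is the uniform boundedness of the Poisson solution $\hah$ and its directional derivatives $[\Dg\hah]_t$, $[\Dh\hah]_t$ along the trajectory. This depends on the strong regularity of Poisson solutions guaranteed by the harmonic-probing assumption (A0), combined with the twice-differentiability of $f$ in (A5) and the ultimate boundedness (A4); together these confine $X_t$ to a compact set on which $\hah$ and its first-order derivatives are bounded, and all such bounds can be absorbed into the single constant $\bdd{t:ODE_Y_lambda}$. A secondary bookkeeping subtlety is keeping the signs straight when invoking \Cref{t:PMFstep0}, in particular verifying that the $\tilh_t$ fluctuations indeed cancel rather than doubling when the two pieces of $\ddt Y_t$ are combined.
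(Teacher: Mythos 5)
Your argument is essentially the paper's own proof: differentiate $Y_t$, use \Cref{t:PMFstep0}/\Cref{t:PMFstep1} to absorb the zero-mean fluctuation into the corrector, then freeze the slow variable at $T_n$ via Lipschitz continuity of $\barh$ together with \Cref{t:ODEstate_static_t}, and bound $\|\Lambda_t-Y_t\|=\beta\|\hah_t\|=O(\beta)$ using (A0) and (A4), exactly as the paper does. The sign subtlety you flag at the end is real but is shared with the paper: with the corrector as written, $Y_t=\Lambda_t-\beta\hah_t$, and the convention $\ddt\hah(x,\Phi_t)=-\tilh(x,\qsaprobe_t)$, the $\tilh_t$ terms double rather than cancel, so the clean identity actually requires $Y_t=\Lambda_t+\beta\hah_t$ (equivalently, flipping the sign of $\hah$), a typo-level fix that changes neither the structure of $w^Y_t$ nor the stated bound on $\clE^Y_t$.
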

\begin{proof}
	In view of the definition of $Y_t$ in \eqref{e:scaled_lambda}, its derivative with respect to time is given by
	\[
	\begin{aligned}
\ddt Y_t &= \ddt \Lambda_t  - \beta \ddt \hah_t 
\\
&= \beta [\barh(\ODEstate_t,\Lambda_t) + a_t [\Dh \hag]_t  + \beta [\Dh \hah]_t ]
	\end{aligned}
	\]
	in which the last expression follows from \eqref{e:Apparent} and \Cref{t:PMFstep1}.
	
	For $T_n < t \leq T_{n+1}$, Lipschitz continuity of $\barh$ and the triangle inequality gives
	\[
	\begin{aligned}
\barh(\ODEstate_t,\Lambda_t) &=  \barh(\ODEstate_{T_n},Y_t) 
+ \clE^Y_t 
\\
\text{where }
\| \clE^Y_t \| &\leq L_f\|\ODEstate_{t} -  \ODEstate_{T_n}\| 
+ \|   \beta \hah_t     \| \leq
\bdd{t:ODE_Y_lambda} \Big(T \frac{a_{T_n}}{\beta}
+ \beta \Big)
	\end{aligned}
	\]
	in which the last bounds follow from \Cref{t:ODEstate_static_t} along with the assumed boundedness of $\bfmX$ in (A4) and the fact that $\hah$ is continuous in its first variable under (A0). 
\end{proof}

\begin{lemma}
	\label[lemma]{t:ODE_Z_lambda}
	Suppose the assumptions of \Cref{t:ODE_Y_lambda} hold. Then, for any $n \geq n_0$ with $n_0$ satisfying $\beta^2 \geq  a_{T_{n_0}}$, 
	\begin{equation}
\begin{aligned}
	\ddt Z_t^{n} &= \beta [ \barAff(\ODEstate_{T_n})Z_t^{n}  + w^Z_t  ] \, , \quad T_n < t \leq T_{n+1}
	\\
	\text{with } 
	\|w^Z_t \| &\leq \bdd{t:ODE_Z_lambda} + O(\beta^0 \min\{ \| Z_t^n  \| , \|  Z_t^n  \|^2 \})
\end{aligned}
\label{e:Z_ODE}
	\end{equation}
	in which $\barAff$ is defined in (A5) and $\bdd{t:ODE_Z_lambda}$ is a constant depending upon $X_0$ but independent of $\beta$. 
\end{lemma}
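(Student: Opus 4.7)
The plan is to carry out a Taylor expansion of $\barh$ around the manifold $\lambda = \fasttarg(\theta)$ and then reinterpret the result in terms of $Z_t^n$. Differentiating \eqref{e:scaled_lambda} and using Lemma~\ref{t:ODE_Y_lambda}, we obtain on $(T_n, T_{n+1}]$,
\[
\ddt Z_t^n \;=\; \tfrac{1}{\beta} \ddt Y_t \;=\; \barh(\ODEstate_{T_n}, Y_t) + w_t^Y .
\]
Since $Y_t = \fasttarg(\ODEstate_{T_n}) + \beta Z_t^n$, I would Taylor expand $\barh(\ODEstate_{T_n}, \,\cdot\,)$ about $\fasttarg(\ODEstate_{T_n})$. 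By (A3), $\barh(\ODEstate_{T_n}, \fasttarg(\ODEstate_{T_n})) = 0$; by (A5) the first-order term is $\partial_\lambda \barh(\ODEstate_{T_n}, \fasttarg(\ODEstate_{T_n}))(\beta Z_t^n) = \beta \barAff(\ODEstate_{T_n}) Z_t^n$; and the quadratic remainder is bounded in norm by $\tfrac12 M (\beta \|Z_t^n\|)^2$, where $M$ is a local bound on the Hessian of $\barh$. Collecting terms,
\[
\ddt Z_t^n \;=\; \beta \barAff(\ODEstate_{T_n}) Z_t^n + R_t + w_t^Y ,
\qquad \|R_t\| \le \tfrac{M}{2}\beta^2 \|Z_t^n\|^2 .
\]
This identifies $\beta w_t^Z = R_t + w_t^Y$, and it remains to verify the claimed bound on $w_t^Z$.

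Next I would break $w_t^Z$ into two contributions. For the first, $w_t^Y/\beta$, the bound in Lemma~\ref{t:ODE_Y_lambda} gives $\|w_t^Y/\beta\| \le \bdd{t:ODE_Y_lambda}(T a_{T_n} + \beta^2)/\beta^2$. Here is where the hypothesis $\beta^2 \ge a_{T_{n_0}}$ and monotonicity of $\{a_t\}$ enter: for $n \ge n_0$ we have $a_{T_n} \le a_{T_{n_0}} \le \beta^2$, so $T a_{T_n}/\beta^2 \le T$, and hence $\|w_t^Y/\beta\| \le \bdd{t:ODE_Y_lambda}(T+1)$, a $\beta$-independent constant that I fold into $\bdd{t:ODE_Z_lambda}$. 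For the second contribution, the Taylor remainder gives $\|R_t/\beta\| \le \tfrac{M}{2} \beta \|Z_t^n\|^2$; this is the quadratic branch of the $\min$. The linear branch comes from the alternative Lipschitz estimate $\|\barh(\ODEstate_{T_n}, Y_t) - \beta \barAff(\ODEstate_{T_n}) Z_t^n\| \le (\Lipf + \|\barAff(\ODEstate_{T_n})\|)\,\beta\|Z_t^n\|$, which follows from (A2) together with boundedness of $\barAff$ on the compact set where $\ODEstate$ lives by (A4)--(A5). Dividing by $\beta$ and taking the pointwise minimum yields $\|R_t/\beta\| = O(\min\{\|Z_t^n\|, \|Z_t^n\|^2\})$ with constants independent of $\beta$, which matches the $O(\beta^0 \cdot)$ notation in the statement.

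The main technical obstacle is keeping the dependence on $\beta$ clean. In particular, the $1/\beta$ rescaling in the definition of $Z_t^n$ amplifies the $w_t^Y$ error, and it is only the precisely tuned hypothesis $\beta^2 \ge a_{T_{n_0}}$ (together with $n \ge n_0$) that prevents $w_t^Y/\beta$ from blowing up as $\beta \downarrow 0$. One must also check that the Hessian bound $M$ and the Lipschitz constant used for the linear branch are uniform in $n$; this follows from (A4), which confines $\{\BIGstate_t\}$ ultimately to a compact set, together with the $C^2$ assumption (A5), so that the Hessian of $\barh$ is uniformly bounded on the relevant set. Combining the two bounds on $w_t^Z$ completes the proof.
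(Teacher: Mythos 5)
Your proof is correct and follows essentially the same route as the paper: differentiate $Z^n_t$, Taylor-expand $\barh(\ODEstate_{T_n},\,\cdot\,)$ about $\fasttarg(\ODEstate_{T_n})$ to extract $\beta\barAff(\ODEstate_{T_n})Z^n_t$, bound the remainder by the Lipschitz/Hessian minimum, and use $a_{T_n}\le\beta^2$ for $n\ge n_0$ to turn $w^Y_t/\beta$ into a $\beta$-independent constant. The only slight imprecision is that the bound quoted from \Cref{t:ODE_Y_lambda} applies to $\clE^Y_t$ alone, so the terms $a_t[\Dh \hag]_t+\beta[\Dh \hah]_t$ must be bounded separately (as the paper does via smoothness of $\hag,\hah$ under (A0) and the boundedness in (A4)), which is handled by exactly the tools you already invoke.
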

\begin{proof}
	We begin by considering \eqref{e:Y_ODE}. The definition \eqref{e:scaled_lambda} gives $Y_t = \beta Z^n_t + \fasttarg(\ODEstate_{T_n})$. In view of this, a Taylor series approximation of $\barh$ around $(\ODEstate_{T_n}; \fasttarg(\ODEstate_{T_n}))$  yields
	\begin{align}
\ddt Y_t &= \beta^2\Big[\frac{1}{\beta} \barh(\ODEstate_{T_n}, Y_t) + \frac{1}{\beta}w^Y_t \Big] 
\label{e:preY_ODELin}
\\
&= \beta^2 [\barAff(\ODEstate_{T_n})  Z_t^{n} + \frac{1}{\beta} w^Y_t + \epsy^Z_t]
\label{e:Y_ODELin}
	\end{align}
	in which $\epsy^Z_t$ is the error in the second order Taylor approximation of $\barh$. Under Lipschitz continuity of $\barh$ and the fact that for all $\beta$, $\beta \leq \beta^0$, this error admits the bound $
	\epsy^Z_t = O(\beta^0 \min\{ \|Z_t^{n}\|, \| Z_t^{n}\|^2  \} )$.
	
	Since $n\geq n_0$, it follows that $\clE^Y$ in \eqref{e:Y_ODE} admits the upper bound: $\| \clE^Y_t\| \leq \bdd{t:ODE_Y_lambda} (T+1)\beta$. Moreover, assumption (A0) implies that $\hag$ and $\hah$ are smooth functions of $\bfX$. This along with the assumption that $\limsup_{t \to \infty} \| X_t\| \leq b^\bullet$ in (A4) implies that $\| w^Y_t \| \leq \bdd{t:ODE_Z_lambda} \beta$ for a constant $\bdd{t:ODE_Z_lambda}$ depending upon $X_0$.
	
	Differentiating $Z_t^{n}$ in \eqref{e:scaled_lambda} with respect to time gives $\ddt Z_t^{n} = \frac{1}{\beta} \ddt Y_t$, completing the proof. 
\end{proof}

Assumptions (A4), (A5) and (A6) imply that the ODE \eqref{e:Z_ODE} is exponentially asymptotically stable when $w^Z \equiv 0$. This motivates an application of \Cref{t:Robust} to obtain boundedness of $\{Z_t^{n}\}$.

\begin{lemma}
	\label[lemma]{t:Facts_ZTn}
	Suppose the assumptions of \Cref{t:ODE_Z_lambda} hold. Then, there exists $\beta^0$ such that for all $0< \beta \leq \beta^0$ and any $n \geq n_0$ with $n_0$ satisfying $\beta^2 \geq  a_{T_{n_0}}$, such that
	\whamrm{(i)} there exists $T>0$ large enough so that the contraction holds:
	\begin{equation}
\|  Z^n_{T_{n+1}}  \|
\leq 
\half \|  Z^n_{T_{n}}  \| + \bdd{t:Facts_ZTn}
\label{e:main_contraction}
	\end{equation}

	\whamrm{(ii)} the following bound holds for each $n$: for a constant $\bdd{t:Facts_ZTn}$, 
	\[
	\|  Z^n_{t} -  Z^n_{T_{n}}   \|	
	\leq 
	\bdd{t:Facts_ZTn} (\|  Z^n_{T_{n}} \|+ 1 )		
	\, , \quad  T_n <  t< T_{n+1}
	\]	
where $\bdd{t:Facts_ZTn}$ is a constant depending on $X_0$ but independent of $\beta$. 	
\end{lemma}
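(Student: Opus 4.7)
Both parts rest on the ODE representation for $Z_t^n$ from \Cref{t:ODE_Z_lambda}, which I view as a linear system driven by $\barAff(\ODEstate_{T_n})$ with a disturbance $w_t^Z$ whose magnitude is bounded by a constant plus a term sublinear in $\|Z_t^n\|$. By (A4), $\{\ODEstate_{T_n}\}$ takes values in a compact set, and (A5) ensures that $\barAff(\theta)$ is Hurwitz for each $\theta$ in that set. By continuity of $\barAff$ together with compactness, these matrices admit a common quadratic Lyapunov function $V^\circ$ whose square root $V \eqdef \sqrt{V^\circ}$ is equivalent to $\|\cdot\|$ and has bounded gradient outside a neighborhood of the origin, exactly as in the setup preceding \Cref{t:Robust}.

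\textbf{Part (ii)} will follow from Gronwall's inequality. Taking norms in \eqref{e:Z_ODE} yields $\tfrac{d}{dt}\|Z_t^n\| \leq \beta C_1(1 + \|Z_t^n\|)$ for a constant $C_1$ coming from the uniform bound on $\barAff$ and from the constant term in $w_t^Z$; the sublinear $\|Z_t^n\|$-dependent part is absorbed into the same estimate. Integrating over $[T_n, t]$ with $t \leq T_{n+1}$, a window of length $T/\beta$, gives $\|Z_t^n\| \leq (\|Z_{T_n}^n\| + 1)e^{C_1 T} - 1$, from which the triangle inequality produces the claimed bound $\|Z_t^n - Z_{T_n}^n\| \leq \bdd{t:Facts_ZTn}(\|Z_{T_n}^n\| + 1)$.

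\textbf{Part (i)} is obtained by applying \Cref{t:Robust} on the interval $[T_n, T_{n+1}]$ after first using part (ii) to localize. The plan is: part (ii) confines $\|Z_t^n\|$ to a ball of radius proportional to $\|Z_{T_n}^n\| + 1$ throughout the window, and on this ball the sublinear noise in $w_t^Z$ splits into a piece that is bounded uniformly in $\beta$ (contributing to $b^w$) and a piece whose effective coefficient in front of $\|Z_t^n\|$ carries a factor that shrinks as $\beta^0$ is reduced; this is where the quadratic-in-$\|Z\|$ part of the Taylor remainder, paid for by a factor of $\beta$, becomes essential. With this reduction the small-gain hypothesis $\epsy < \delta$ of \Cref{t:Robust} is satisfied, and the conclusion of that lemma, evaluated at $t = T_{n+1}$ over the window of length $T/\beta$, gives $V(Z_{T_{n+1}}^n) \leq e^{-\delta' T} V(Z_{T_n}^n) + \bdd{t:Robust}$. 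Choosing $T$ large enough that $e^{-\delta' T}$ is dominated by $\tfrac{1}{4}$ and translating back to the norm via the equivalence $V \sim \|\cdot\|$ yields \eqref{e:main_contraction}.

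\textbf{The main obstacle} is verifying the small-gain condition $\epsy < \delta$ of \Cref{t:Robust}. The bound on $w_t^Z$ stated in \Cref{t:ODE_Z_lambda} has a linear-in-$\|Z\|$ component arising from the global Lipschitz bound on $\barh$, whose coefficient is \emph{not} small in $\beta$. Overcoming this requires revisiting the Taylor expansion of $\barh$ to isolate the $O(\beta \|Z\|^2)$ remainder (valid on the bounded set supplied by part (ii)) from the globally-valid Lipschitz bound, and combining the two into a small-gain constant that shrinks with $\beta^0$. Once this separation is carried out, the uniform Hurwitz rate $\delta$ of $\{\barAff(\theta) : \|\theta\| \leq b^\bullet\}$ exceeds the effective gain for $0<\beta \leq \beta^0$ with $\beta^0$ chosen small, and the remainder of the argument is routine.
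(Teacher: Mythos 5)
Your overall route is the paper's route: part (ii) is the Bellman--Gr\"onwall bound applied to \eqref{e:Z_ODE} over a window of length $T/\beta$ (the factors of $\beta$ cancel, exactly as you compute), and part (i) is an application of \Cref{t:Robust} with $x_t = Z^n_t$, $w_t = w^Z_t$, followed by evaluating the contraction factor $\exp(-\delta' \beta \cdot T/\beta) = \exp(-\delta' T)$ at $t = T_{n+1}$ and choosing $T$ large after converting between $V$ and the norm. Those parts are fine (one small caveat: a single common quadratic Lyapunov function for the whole family $\{\barAff(\theta)\}$ need not exist; what you actually need, and what continuity of $\barAff$ plus compactness gives, is a family $V_\theta$ with constants $\delta_1,\delta_2,\delta_3,L_V$ uniform over the compact set, which is enough to apply \Cref{t:Robust} window by window with uniform constants).

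Where you diverge from the paper is the ``main obstacle'' paragraph, and there the divergence hurts rather than helps. First, it rests on a misreading of \Cref{t:ODE_Z_lambda}: the bound stated there is $\|w^Z_t\| \leq \bdd{t:ODE_Z_lambda} + O\bigl(\beta^0 \min\{\|Z^n_t\|,\|Z^n_t\|^2\}\bigr)$, so the coefficient multiplying the state-dependent part is already $O(\beta^0)$; using $\min\{\|Z\|,\|Z\|^2\}\leq \|Z\|$, the hypothesis of \Cref{t:Robust} holds with $\epsy = O(\beta^0)$, and the small-gain condition $\epsy < \delta$ is met simply by shrinking $\beta^0$ --- no re-derivation of the Taylor remainder and no localization are required for the lemma as posed. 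Second, the patch you propose in its place does not close: you localize $\|Z^n_t\|$ to a ball of radius proportional to $\|Z^n_{T_n}\|+1$ via part (ii) and then claim the quadratic remainder $O(\beta\|Z\|^2)$ yields an effective linear coefficient that is small. On that ball the effective coefficient is $O\bigl(\beta(\|Z^n_{T_n}\|+1)\bigr)$, and $\|Z^n_{T_n}\|$ is precisely the quantity the contraction is supposed to control; at the start of the scheme it can be of order $1/\beta$ (since $Z = (Y - \fasttarg(\ODEstate_{T_n}))/\beta$ with an $O(1)$ initial error), so the product $\beta(\|Z^n_{T_n}\|+1)$ need not be small and the small-gain hypothesis is not verified in exactly the large-$\|Z\|$ regime where it matters. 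The argument as described is therefore circular; you should instead take the bound of \Cref{t:ODE_Z_lambda} at face value (it is an assumption of the present lemma) and apply \Cref{t:Robust} directly, as the paper does.
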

\begin{proof}
	Part (i) follows from a direct application of \Cref{t:Robust} to the ODE in \eqref{e:Z_ODE} with $x_t = Z^n_{t}$ and $w_t =w^Z_t$. Moreover, part (ii) is obtained  from an application of the Bellman-Gr{\"o}nwall lemma.
\end{proof}

\begin{lemma}
	\label[lemma]{t:ROC_lambda_Tplusone}
	Suppose the assumptions of \Cref{t:ODE_Y_lambda} hold. If in addition $Z_{T_{n}}^{n+1} = Z_{T_{n}}^{n}$, there exists $\beta^0$ such that for all $0< \beta \leq \beta^0$ and any $n \geq n_0$ with $n_0$ satisfying $\beta^2 \geq  a_{T_{n_0}}$, the following holds:
	\begin{equation}
\| Z_{T_{n+1}}^{n+1} -    Z_{T_{n+1}}^{n}  \|
\leq 
T^2 (L_f \bdd{t:ODEstate_static_t} + 2) \frac{a_{T_n}}{\beta^2}
	\end{equation}
	in which $\bdd{t:ODEstate_static_t}$ is the constant obtained from \Cref{t:ODEstate_static_t}.
\end{lemma}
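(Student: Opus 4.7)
The plan is to compare the two linear ODEs satisfied by $Z_t^n$ and $Z_t^{n+1}$ on the interval $[T_n, T_{n+1}]$, set $D_t \eqdef Z_t^{n+1} - Z_t^n$ with $D_{T_n} = 0$ by hypothesis, and propagate $D$ forward using variation of constants. The key observation is that the two instances of \eqref{e:Z_ODE} share the same structural form but differ in the freezing point of $\barAff$ (namely $\barAff(\ODEstate_{T_n})$ versus $\barAff(\ODEstate_{T_{n+1}})$) and in the Taylor-remainder centering hidden inside $w_t^Z$ (which tracks the base point $(\ODEstate_{T_n}, \fasttarg(\ODEstate_{T_n}))$ versus $(\ODEstate_{T_{n+1}}, \fasttarg(\ODEstate_{T_{n+1}}))$). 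Both discrepancies scale with $\|\ODEstate_{T_{n+1}} - \ODEstate_{T_n}\|$, which is already bounded by $\bdd{t:ODEstate_static_t} T a_{T_n}/\beta$ via \Cref{t:ODEstate_static_t}.

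First, subtracting the two instances of \eqref{e:Z_ODE} yields
\[
\ddt D_t = \beta\bigl[\barAff(\ODEstate_{T_{n+1}}) D_t + \Delta_t\bigr],
\qquad
\Delta_t = \bigl(\barAff(\ODEstate_{T_{n+1}}) - \barAff(\ODEstate_{T_n})\bigr) Z_t^n + \bigl(w_t^{Z,n+1} - w_t^{Z,n}\bigr).
\]
Next, bound $\Delta_t$. By smoothness of $\barh$ from (A5), $\barAff$ is Lipschitz, so the first summand is $O(T a_{T_n}/\beta)$ after combining \Cref{t:ODEstate_static_t} with the uniform boundedness of $Z_t^n$ from \Cref{t:Facts_ZTn}. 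Retracing the derivation in \Cref{t:ODE_Y_lambda,t:ODE_Z_lambda}, the $n$-independent directional-derivative terms $a_t[\Dh \hag]_t$ and $\beta[\Dh \hah]_t$ in $w^Y_t$ cancel exactly, so the second summand reduces to $\tfrac{1}{\beta}[\barh(\ODEstate_{T_n}, Y_t) - \barh(\ODEstate_{T_{n+1}}, Y_t)]$ plus a quadratic Taylor-remainder discrepancy; by Lipschitz continuity of $\barh$ with constant $L_f$ from (A2), this contributes $L_f \bdd{t:ODEstate_static_t} T a_{T_n}/\beta^2$, the dominant piece of $\Delta_t$.

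Finally, since $D_{T_n} = 0$ and $\barAff(\ODEstate_{T_{n+1}})$ is Hurwitz by (A5), a variation-of-constants argument on the interval $[T_n, T_{n+1}]$ of length $T/\beta$ gives $\|D_{T_{n+1}}\| \leq \beta \cdot (T/\beta) \cdot \sup_s \|\Delta_s\|$ up to a uniform fundamental-matrix constant. Substituting the bound on $\Delta_t$ yields $\|D_{T_{n+1}}\| \leq L_f \bdd{t:ODEstate_static_t} T^2 a_{T_n}/\beta^2 + \text{(lower order)}$, which matches the claimed form; the additive ``$+2$'' constant absorbs the subdominant $(\barAff(\ODEstate_{T_{n+1}}) - \barAff(\ODEstate_{T_n}))Z_t^n$ contribution together with the quadratic Taylor-remainder discrepancy.

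The main obstacle will be isolating $L_f \bdd{t:ODEstate_static_t}$ cleanly in the dominant piece of $\Delta_t$: this requires tracking how the $1/\beta$ scaling in the definition $Z = Y/\beta$ interacts with the Taylor expansion underlying $w^Z$, and verifying that the directional-derivative pieces in $w^Y$ are truly $n$-independent so that they drop out of $w^{Z,n+1}_t - w^{Z,n}_t$, leaving only the $\barh$-difference and the quadratic Taylor remainder. Once this bookkeeping is in place, the variation-of-constants step is routine and only uses the Hurwitz property of $\barAff$ to keep the fundamental-matrix norm bounded.
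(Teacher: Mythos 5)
Your high-level inputs are the right ones (the key bound $\|\ODEstate_{T_{n+1}}-\ODEstate_{T_n}\|\le \bdd{t:ODEstate_static_t}Ta_{T_n}/\beta$ from \Cref{t:ODEstate_static_t}, Lipschitz continuity of $\barh$, and the length $T/\beta$ of the interval), but your mechanism is not the paper's, and as written it has a genuine gap. The paper never differences the linearized $Z$-equations \eqref{e:Z_ODE}: it integrates the pre-linearized representation \eqref{e:preY_ODELin} for the two freezing points along the common trajectory $Y_t=\Lambda_t-\beta\hah_t$, so the entire discrepancy collapses to $\Delta^n_t=\barh(\ODEstate_{T_{n+1}},Y_t)-\barh(\ODEstate_{T_n},Y_t)$ (the directional-derivative terms in $w^Y_t$ cancel, as you observed) plus a step-size difference term; the bound $\|\Delta^n_t\|\le L_f\bdd{t:ODEstate_static_t}Ta_{T_n}/\beta$ and one integration then give exactly the stated constant, with no Hurwitz property, no fundamental matrix, no Lipschitz bound on $\barAff$, and no bound on $\|Z^n_t\|$ entering anywhere.

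The gap comes from the extra terms your linearization creates. First, you control $\bigl(\barAff(\ODEstate_{T_{n+1}})-\barAff(\ODEstate_{T_n})\bigr)Z^n_t$ by invoking ``uniform boundedness of $Z^n_t$ from \Cref{t:Facts_ZTn}'', but that lemma only gives $\|Z^n_t-Z^n_{T_n}\|\le\bdd{t:Facts_ZTn}(\|Z^n_{T_n}\|+1)$, i.e.\ a bound growing with the initial condition; the uniform bound on $\sup_{n,t}\|Z^n_t\|$ is only obtained in the proof of \Cref{t:ROC_lambda}, which itself uses the present lemma, so appealing to it here is circular, and without it your estimate carries a $\|Z^n_{T_n}\|$-dependent term that the stated (initial-condition-free) bound cannot contain. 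Second, the ``quadratic Taylor-remainder discrepancy'' cannot simply be absorbed into the $+2$: each remainder in \eqref{e:Z_ODE} is of size $O(\beta^0\min\{\|Z\|,\|Z\|^2\})$, the two remainders are centered at different base points and evaluated along different trajectories, and without exhibiting cancellation their contribution after integrating over an interval of length $T/\beta$ is of order $T\sup\min\{\|Z\|,\|Z\|^2\}$ (at best $O(\beta)$ even granting a uniform $Z$-bound), which is not $O(a_{T_n}/\beta^2)$ since $a_{T_n}/\beta^2\to0$ as $n\to\infty$ for fixed $\beta$; making this work would require second-derivative bounds from (A5) plus a Gr\"onwall argument that you have not supplied. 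A lesser issue: the variation-of-constants step introduces the transition-matrix constant $\sup_{s\ge0}\|e^{\barAff(\theta)s}\|\ge1$ (and its uniformity in $\theta$) multiplying the dominant term, so even after the repairs you would land on a constant of the same order rather than the specific $T^2(L_f\bdd{t:ODEstate_static_t}+2)a_{T_n}/\beta^2$ claimed. The cleanest fix is to abandon the linearized form and compare the two representations of \eqref{e:preY_ODELin} directly, as the paper does.
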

\begin{proof}
	From \eqref{e:scaled_lambda}, we have that $\ddt Z_{t}^{n} = \frac{1}{\beta} \ddt Y_t $. In view of this identity, integrating both sides of \eqref{e:preY_ODELin} from $T_n$ to $T_{n+1}$ yields
	\[
	\| Z_{T_{n+1}}^{n+1} -    Z_{T_{n+1}}^{n}  \|
	\leq 
	\int^{T_{n+1}}_{T_n} \Big\| \Delta^n_t  + \frac{T}{\beta}(a_{T_{n+1}} - a_{T_{n}}) \Big\|  \, dt
	\]
	where $\Delta^n_t = \barh(\ODEstate_{T_{n+1}} , Y_t) -  \barh(\ODEstate_{T_{n}} , Y_t)$. 
	Lipschitz continuity of $\barh$ implies the bound
	\[
	\| \Delta^n_t \| \leq L_f \|  \ODEstate_{T_{n+1}} -\ODEstate_{T_{n}}  \| 
	\leq 
	L_f \bdd{t:ODEstate_static_t} T \frac{a_{T_n}}{\beta}
	\]
	where the last inequality follows from \eqref{t:ODEstate_static_t}. Since $\{a_t\}$ decreases monotonically in $t$, 
	we have, via the triangle inequality, 
	\[
	\int^{T_{n+1}}_{T_n} \Big\| \Delta^n_t  + \frac{T}{\beta}(a_{T_{n+1}} - a_{T_{n}}) \Big\|  \, dt
	\leq 
	T^2 (L_f \bdd{t:ODEstate_static_t} + 2) \frac{a_{T_n}}{\beta^2}
	\]
	which completes the proof.
\end{proof}

\smallskip
\textit{Proof of \Cref{t:ROC_lambda}.}
For each $n \geq  n_0$, the contraction in part (i) of \Cref{t:Facts_ZTn} along with the triangle inequality gives
\[
\begin{aligned}
	\| Z^{n+1}_{T_{n+1}} \|
	&\leq 
	\half \| Z^{n+1}_{T_{n}}\| + \bdd{t:Facts_ZTn}
	\\
	&\leq 
	\half \| Z^{n}_{T_{n}}\|+ \| Z^{n+1}_{T_{n}} -  Z^{n}_{T_{n}}   \| + \bdd{t:Facts_ZTn}
\end{aligned}
\]
in which $\| Z^{n+1}_{T_{n}} -  Z^{n}_{T_{n}}   \| \leq T^2 (L_f \bdd{t:ODEstate_static_t} + 2)  $ via  \Cref{t:ROC_lambda_Tplusone} and the fact that $\beta^2 \geq  a_{T_n}$ for all $n \geq  n_0$. 
Repeating this process, we obtain:
\[
\| Z^{n+1}_{T_{n+1}} \| \leq \Big( \frac{1}{2}\Big)^{n - n_0}  \| Z^{n_0}_{T_{n_0}}\| +  b^\triangle  \sum_{k = n_0}^{n}\Big( \frac{1}{2}\Big)^k
\] 
in which $b^\triangle  = T^2 (L_f \bdd{t:ODEstate_static_t} + 2) + \bdd{t:Facts_ZTn}$.

To complete the proof we employ the triangle inequality once more along with the bound in part (ii) of \Cref{t:Facts_ZTn} to conclude
\[
\lim_{n\to\infty}  \sup_{ T_n \leq t \leq T_{n+1} }  \| Z^n_{t} \| 
\leq  
\bdd{t:Facts_ZTn}  (b^\triangle + 1)
\]
which implies \eqref{e:exp_contraction}. 
\qed

We now turn to the implications of \Cref{t:ROC_lambda} to $\bfODEstate$. Assumption (A4) implies the existence of a finite time $\clT_\bullet$ depending upon $X_0$ such that $\| X_t\| \leq b^\bullet$ for all $t \geq \clT_\bullet$. This fact is used in the following:
\begin{lemma}
\label[lemma]{t:ODEstate_rewrite}
Under the assumptions of \Cref{t:ROC}, the ODE \eqref{e:ODEstate_QSA_Gen} can be re-written as:
\begin{equation}
\ddt \ODEstate_t = a_t [g(\ODEstate_t,\fasttarg(\ODEstate_t),\qsaprobe_t ) + \Delta^g_{t}  \beta ]
\, , 
\quad 
t \geq \clT_m
\label{e:ODEstate_QSA_Gen_rewrite}
\end{equation}
in which $ \|\Delta^g_{t}\| \leq \bdd{t:ROC_lambda}$ and $ \clT_m \eqdef \max\{ \clT_0 , \clT_\bullet\} $ with $\clT_0 $ the finite time in \Cref{t:ROC_lambda}.
\qed
\end{lemma}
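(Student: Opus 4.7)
The plan is to add and subtract $g(\ODEstate_t,\fasttarg(\ODEstate_t),\qsaprobe_t)$ inside the original slow dynamics \eqref{e:ODEstate_QSA_Gen}, and then absorb the discrepancy between $\Lambda_t$ and $\fasttarg(\ODEstate_t)$ into an $O(\beta)$ perturbation using \Cref{t:ROC_lambda}. Concretely, I would write
\[
g(\ODEstate_t,\Lambda_t,\qsaprobe_t)
= g(\ODEstate_t,\fasttarg(\ODEstate_t),\qsaprobe_t)
+ \bigl[g(\ODEstate_t,\Lambda_t,\qsaprobe_t) - g(\ODEstate_t,\fasttarg(\ODEstate_t),\qsaprobe_t)\bigr],
\]
so that substitution into \eqref{e:ODEstate_QSA_Gen} yields \eqref{e:ODEstate_QSA_Gen_rewrite} upon defining
\[
\Delta^g_t \eqdef \frac{1}{\beta}\bigl[g(\ODEstate_t,\Lambda_t,\qsaprobe_t) - g(\ODEstate_t,\fasttarg(\ODEstate_t),\qsaprobe_t)\bigr].
\]

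For the bound on $\Delta^g_t$, I would invoke the Lipschitz continuity of $g$ in its first two arguments provided by assumption (A2), yielding
\[
\|\Delta^g_t\| \le \frac{L_f}{\beta}\bigl\|\Lambda_t - \fasttarg(\ODEstate_t)\bigr\|.
\]
Then for $t\ge \clT_0$, where $\clT_0$ is the finite time identified in \Cref{t:ROC_lambda}, the bound \eqref{e:preROC_lambda} gives $\|\Lambda_t - \fasttarg(\ODEstate_t)\| \le \bdd{t:ROC_lambda}\beta$, so $\|\Delta^g_t\|$ is bounded by a constant (independent of $\beta$) of the same order as $\bdd{t:ROC_lambda}$. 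Absorbing $L_f$ into the constant (or relabelling) yields the stated bound.

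The use of $\clT_m = \max\{\clT_0,\clT_\bullet\}$ rather than $\clT_0$ alone is the only subtlety: beyond requiring the fast-variable bound from \Cref{t:ROC_lambda}, we also need $\|X_t\|\le b^\bullet$ (guaranteed for $t\ge \clT_\bullet$ by (A4)) so that we stay in the compact region on which all smoothness and Lipschitz estimates used in subsequent arguments are uniform. I do not anticipate any real obstacle here: the statement is essentially a direct substitution combined with the $O(\beta)$ bound established in \Cref{t:ROC_lambda}, and the only bookkeeping concerns making sure the constant agrees with $\bdd{t:ROC_lambda}$, which is just a matter of relabelling.
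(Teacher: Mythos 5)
Your proposal is correct and follows essentially the same route as the paper: add and subtract the term $g(\ODEstate_t,\fasttarg(\ODEstate_t),\qsaprobe_t)$, define $\Delta^g_t$ as the scaled difference, and bound it via Lipschitz continuity of $g$ together with \eqref{e:preROC_lambda} from \Cref{t:ROC_lambda}. Your explicit tracking of the Lipschitz constant $L_f$ (which the paper silently absorbs) and your remark on why $\clT_m$ involves $\clT_\bullet$ are both consistent with, and slightly more careful than, the paper's argument.
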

\begin{proof}
 Adding and subtracting $a_t\barg(\ODEstate_t,\fasttarg(\ODEstate_t))$ to the right hand side of \eqref{e:ODEstate_QSA_Gen} gives \eqref{e:ODEstate_QSA_Gen_rewrite} with $ \Delta^g_{t} =  \frac{1}{\beta}[g(\ODEstate_t,\Lambda_t,\qsaprobe_t) - g(\ODEstate_t,\fasttarg(\ODEstate_t),\qsaprobe_t)]$. Lipschitz continuity of $g$ and \eqref{e:preROC_lambda} completes the proof: for $t \geq \clT_0$,
\[
\|\Delta^g_t\| = \frac{1}{\beta} \| g(\ODEstate_t,\Lambda_t,\qsaprobe_t) -  g(\ODEstate_t,\fasttarg(\ODEstate_t),\qsaprobe_t)\| \leq \frac{1}{\beta} \|\Lambda_t - \fasttarg(\ODEstate_t) \| \leq \bdd{t:ROC_lambda}
\] 
\end{proof}

Similarly to what is done in standard single timescale SA, ultimate boundedness of $\bfODEstate $ is established by comparison of this process with solutions to the \textit{mean flow} ODE \cite{bor20a,CSRL,kusyin97}. Here we consider the mean flow associated with $\bfODEstate$:
\begin{equation}
	\begin{aligned}
\ddt\odestate_t  =  \barf(\odestate_t,\fasttarg(\odestate_t)) \, , \quad \text{for $\odestate_0 = \theta = \Re^d$}
	\end{aligned}
\label{e:meanflow_}
\end{equation}

For analysis, it is convenient to define a new timescale based upon the gain process $\{a_t\}$,
\begin{equation}
	\tau = s(t) \eqdef \int^t_0 a_r \, dr \, , \quad t\geq \clT_m
	\label{e:timeTrans_fast}
\end{equation}
with $\clT_m$ as defined by \eqref{t:ODEstate_rewrite}.
The QSA ODE \eqref{e:ODEstate_QSA_Gen_rewrite} can then be expressed in terms of this new timescale as
\begin{subequations}
	\begin{align}
\tfrac{d}{d\tau}  \haODEstate_\tau 
&=   [g(\haODEstate_\tau ,\fasttarg(\haODEstate_\tau) , \haqsaprobe_\tau  )
+ \haDelta^g_\tau  \beta] \, , \quad \tau \geq s (\clT_m)
\label{e:haODEstate}
	\end{align}
	where $\haODEstate_\tau \eqdef \ODEstate_t \mid_{t = s^{-1}(\tau)}$, $\haqsaprobe_\tau \eqdef \qsaprobe_t \mid_{t={s^{-1}(\tau)}}$ and $\haDelta^g_\tau \eqdef \haDelta^g_t \mid_{t = s^{-1}(\tau)}$.
	
	For each $\tau>0$, let $\{\haodestate_\gamma^\tau \colon \gamma \geq \tau\}$ be solutions to ``re-started'' versions of the mean flow ODE \eqref{e:meanflow_}:
	\begin{align}
\tfrac{d}{d\gamma}  \haodestate_\gamma^\tau
&=   \barg(\haodestate_\gamma^\tau ,\fasttarg(\haodestate_\gamma^\tau)  ) 
\, , \quad  \haodestate_\tau^\tau = \haODEstate_\tau 
	\label{e:meanflow_theta}
%
	\end{align}

\end{subequations}

One step in the proof of  \Cref{t:ROC}~(i) is based upon establishing that the difference between solutions to \eqref{e:haODEstate} and \eqref{e:meanflow_theta} is at most proportional to $\beta$ over finite time intervals of length $T^\theta>0$ as $\tau \to \infty$.

The next lemma establishes a version of the law of large numbers over these fixed time intervals. Its proof follows from the exact same steps of \cite[Prop. 4.28]{CSRL}.
\begin{lemma}
	\label[lemma]{t:g_vary_coupling}
	Suppose (A0)--(A6) hold.
	Then, for any $T^\theta>0$,
%
\[
	\lim_{\tau \to \infty} \sup_{\nu \in [0,T^\theta]}  \Big\| \int^{\tau+\nu}_\tau g(\haODEstate_\gamma, \fasttarg(\haODEstate_\gamma) , \haqsaprobe_\gamma) - \barg(\haODEstate_\gamma , \fasttarg(\haODEstate_\gamma)) \,  d\gamma \Big\| =0
\]	
	\qed
\end{lemma}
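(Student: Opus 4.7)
The plan is to exploit the two-timescale structure made explicit by the change of variable \eqref{e:timeTrans_fast}: in the $\tau$-scale, the probing signal $\haqsaprobe_\gamma$ oscillates at a rate that grows without bound as $\tau\to\infty$, whereas $\haODEstate_\gamma$ evolves at an $O(1)$ rate. The averaging effect should cancel the oscillatory integral, and Poisson's equation from assumption (A0) is the tool that quantifies this cancellation. First I would define the zero-mean (w.r.t.\ $\uppi$) forcing function $\tilu(\theta,z) \eqdef g(\theta,\fasttarg(\theta),G(z)) - \barg(\theta,\fasttarg(\theta))$ and let $\hau$ be the corresponding solution to Poisson's equation \eqref{e:fish}, which depends only on $(\theta,z)$ since $\tilu$ does not depend on $\lambda$. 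Under (A0) and (A3), $\hau$ inherits smoothness in $\theta$, and (A4) localizes the argument to a compact set so that $\hau$, $\partial_\theta \hau$, and the directional derivative $[\Dg \hau]$ are uniformly bounded along the trajectory.

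Next I would revert to the original $t$-scale using $\tau = s(t)$, so with $t_1 \eqdef s^{-1}(\tau)$ and $t_2 \eqdef s^{-1}(\tau+\nu)$,
\[
\int_{\tau}^{\tau+\nu}\clE_\gamma\,d\gamma = \int_{t_1}^{t_2} a_t\,\tilu(\ODEstate_t,\Phi_t)\,dt.
\]
Differentiating the Poisson identity in $T$ (with $\theta$ fixed) yields $\tilu(\theta,z) = -\partial_z \hau(\theta,z)\cdot W z$, so computing the total derivative of $\hau_t \eqdef \hau(\ODEstate_t,\Phi_t)$ along the full trajectory gives
\[
\ddt \hau_t = a_t\,\partial_\theta \hau_t\cdot g(\ODEstate_t,\Lambda_t,\qsaprobe_t) - \tilu(\ODEstate_t,\Phi_t).
\]
Rearranging and multiplying by $a_t$ leads to the representation
\[
\int_{t_1}^{t_2} a_t \tilu_t\,dt = -\int_{t_1}^{t_2} a_t\,\ddt \hau_t\,dt + \int_{t_1}^{t_2} a_t^2\,\partial_\theta \hau_t\cdot g_t\,dt,
\]
and integration by parts on the first integral produces $a_{t_1}\hau_{t_1} - a_{t_2}\hau_{t_2} + \int_{t_1}^{t_2} \dot a_t \hau_t\,dt$ as the drift-induced remainder.

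Finally I would bound each of the three resulting terms uniformly in $t_2 > t_1$, sending $t_1\to\infty$ (equivalently $\tau\to\infty$). Boundedness of $\hau$ gives $|a_{t_i}\hau_{t_i}| \le C a_{t_i}\to 0$; monotonicity of $\{a_t\}$ yields $\bigl|\int_{t_1}^{t_2}\dot a_t \hau_t\,dt\bigr| \le C(a_{t_1}-a_{t_2}) \le C a_{t_1}\to 0$; and since $\rho > \half$ under (A1) the tail $\int_{t_1}^\infty a_t^2\,dt$ is finite and vanishes, controlling the second integral. Each bound is uniform in the upper limit, hence uniform over $\nu\in[0,T^\theta]$, yielding the claim. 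The main obstacle is establishing the uniform regularity of $\hau$ and its $\theta$-derivative on the reachable set: this rests squarely on the tight control over solutions to Poisson's equation afforded by the quasi-periodic structure of $\bfPhi$ in (A0) together with the ultimate boundedness in (A4), precisely the Markovian foundation developed earlier in the paper.
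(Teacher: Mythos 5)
Your proof is correct and follows essentially the same route as the paper, which gives no details and simply defers to \cite[Prop.~4.28]{CSRL}: solving Poisson's equation \eqref{e:fish} for the frozen-$\theta$ forcing function $g(\theta,\fasttarg(\theta),G(z))-\barg(\theta,\fasttarg(\theta))$, applying the chain rule along the full trajectory, and integrating by parts against the vanishing gain (using monotonicity of $a_t$ and $\int a_t^2\,dt<\infty$ under (A1)) is exactly the standard QSA averaging argument invoked there and used repeatedly elsewhere in the paper. The only detail worth flagging is that writing $\partial_\theta\hau$ tacitly requires differentiability of $\fasttarg$, which indeed holds by (A5) and the implicit function theorem since $\barAff(\theta)$ is Hurwitz, hence invertible.
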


Solidarity between the solutions to  \eqref{e:haODEstate} and \eqref{e:meanflow_theta} then follows from \Cref{t:g_vary_coupling}.
\begin{lemma}
	\label[lemma]{t:Couplesol_timevary_theta}
	Under the assumptions of \Cref{t:ROC}, the following holds for any $T^\theta>0$,
	\[
	\lim_{\tau \to \infty} \sup_{\nu \in [0,T^\theta]}	\|\haODEstate_{\tau+\nu} - \haodestate_{\tau+\nu}^\tau  \| \leq \bdd{t:Couplesol_timevary_theta} \beta
	\]
	where $\haODEstate_{\gamma}$ solves \eqref{e:haODEstate} and $\haodestate_{\gamma}^\tau$ solves \eqref{e:meanflow_theta}. 
\end{lemma}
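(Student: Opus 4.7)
The plan is a standard Gronwall-style comparison argument. Set $e_\gamma \eqdef \haODEstate_\gamma - \haodestate^\tau_\gamma$ for $\gamma \in [\tau, \tau + T^\theta]$, note $e_\tau = 0$ by the initial condition in \eqref{e:meanflow_theta}, and subtract \eqref{e:meanflow_theta} from \eqref{e:haODEstate}. Integrating from $\tau$ to $\tau + \nu$ and inserting $\pm \barg(\haODEstate_\gamma, \fasttarg(\haODEstate_\gamma))$ splits $e_{\tau + \nu}$ into three pieces: a fluctuation term $\clI^{\rm a}_\tau(\nu) \eqdef \int_\tau^{\tau+\nu} [g(\haODEstate_\gamma, \fasttarg(\haODEstate_\gamma), \haqsaprobe_\gamma) - \barg(\haODEstate_\gamma, \fasttarg(\haODEstate_\gamma))] \, d\gamma$; a Lipschitz difference $\clI^{\rm b}_\tau(\nu) \eqdef \int_\tau^{\tau+\nu} [\barg(\haODEstate_\gamma, \fasttarg(\haODEstate_\gamma)) - \barg(\haodestate^\tau_\gamma, \fasttarg(\haodestate^\tau_\gamma))] \, d\gamma$; and a residual $\clI^{\rm c}_\tau(\nu) \eqdef \beta \int_\tau^{\tau+\nu} \haDelta^g_\gamma \, d\gamma$.

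I would bound these terms individually. The fluctuation piece $\clI^{\rm a}_\tau(\nu)$ is handled directly by \Cref{t:g_vary_coupling}: there exists $\epsy_\tau \to 0$ as $\tau \to \infty$ with $\sup_{\nu \in [0, T^\theta]} \|\clI^{\rm a}_\tau(\nu)\| \le \epsy_\tau$. For $\clI^{\rm b}_\tau(\nu)$, Lipschitz continuity of $\barg$ in both arguments via (A2) together with (A3) yields an effective Lipschitz constant $L^\sharp \eqdef \Lipf (1 + \Lipfasttarg)$ and thus $\|\clI^{\rm b}_\tau(\nu)\| \le L^\sharp \int_\tau^{\tau+\nu} \|e_\gamma\| \, d\gamma$. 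For $\clI^{\rm c}_\tau(\nu)$, \Cref{t:ODEstate_rewrite} gives $\|\haDelta^g_\gamma\| \le \bdd{t:ROC_lambda}$ uniformly for large $\tau$, so $\|\clI^{\rm c}_\tau(\nu)\| \le \bdd{t:ROC_lambda} T^\theta \beta$.

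Combining the three bounds yields, for all $\nu \in [0, T^\theta]$ and all sufficiently large $\tau$,
\[
\|e_{\tau+\nu}\| \le \epsy_\tau + \bdd{t:ROC_lambda} T^\theta \beta + L^\sharp \int_\tau^{\tau+\nu} \|e_\gamma\| \, d\gamma.
\]
An application of the Bellman–Gronwall inequality gives $\sup_{\nu \in [0, T^\theta]} \|e_{\tau+\nu}\| \le (\epsy_\tau + \bdd{t:ROC_lambda} T^\theta \beta) \exp(L^\sharp T^\theta)$. Taking $\limsup$ as $\tau \to \infty$ eliminates $\epsy_\tau$ and produces the conclusion with $\bdd{t:Couplesol_timevary_theta} \eqdef \bdd{t:ROC_lambda} T^\theta \exp(L^\sharp T^\theta)$.

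The main obstacle is not algebraic but structural: one must confirm that the LLN-type approximation in \Cref{t:g_vary_coupling} applies even though the integrand is evaluated along the perturbed process $\haODEstate$ rather than a frozen parameter. This is exactly the strength of \Cref{t:g_vary_coupling} as stated (it is formulated for the time-varying process), so no additional work is required. A secondary point is that the argument is only valid for $\tau \ge s(\clT_m)$, which is harmless since the statement only asserts an asymptotic bound as $\tau \to \infty$; boundedness of $\{\haODEstate_\gamma\}$ on this range, needed so that Lipschitz bounds are applied on a compact set, is guaranteed by (A4) composed with the change of time scale \eqref{e:timeTrans_fast}.
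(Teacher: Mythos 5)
Your proposal is correct and follows essentially the same route as the paper's proof: subtract the restarted mean flow from the time-changed QSA ODE, add and subtract $\barg(\haODEstate_\gamma,\fasttarg(\haODEstate_\gamma))$, control the fluctuation term via \Cref{t:g_vary_coupling}, the residual via the bound $\|\haDelta^g_\gamma\|\le \bdd{t:ROC_lambda}$ from \Cref{t:ODEstate_rewrite}, and close with the Bellman--Gr\"onwall inequality before letting $\tau\to\infty$. Your only departures are cosmetic refinements (splitting the non-Lipschitz piece into two explicit integrals and spelling out the composite Lipschitz constant $\Lipf(1+\Lipfasttarg)$), which the paper handles implicitly through $\delta^\tau_1,\delta^\tau_2$.
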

\begin{proof}
	Integrating \eqref{e:haODEstate} and \eqref{e:meanflow_theta} from $\tau$ to $\tau+\nu$, we obtain
	\begin{equation}
\begin{aligned}
	\haODEstate_{\tau+\nu} - \haodestate_{\tau+\nu}^\tau  
	& = 
	\int^{\tau+\nu}_\tau [ g(\haODEstate_\gamma,\fasttarg(\haODEstate_\gamma), \haqsaprobe_\gamma) + \haDelta^g_\gamma \beta - \barg(\haODEstate_\gamma,\fasttarg(\haODEstate_\gamma))] \, d\gamma
	\\
	&+  
	\int^{\tau+\nu}_\tau [ \barg(\haODEstate_\gamma,\fasttarg(\haODEstate_\gamma)) - \barg(\haodestate_\gamma^\tau,\fasttarg(\haodestate_\gamma^\tau)) ] \, d\gamma 
	\label{e:precoupleX}
\end{aligned}
	\end{equation}
	where $\haODEstate_\tau = \haodestate^\tau_\tau$.
	Under the assumed Lipschitz continuity in (A2), the second term is bounded as follows
	\[
	\Big\|\int^{\tau+\nu}_\tau [  \barg(\haODEstate_\gamma,\fasttarg(\haODEstate_\gamma)) - \barg(\haodestate_\gamma^\tau,\fasttarg(\haodestate_\gamma^\tau)) ] \, d\gamma \Big\|
	\le
	L_f \int^{\tau+\nu}_\tau  \| \haODEstate_\gamma  - \haodestate_\tau \| \, d\gamma
	\]
	Taking norms of both sides of \eqref{e:precoupleX} yields
	\[
	\| \haODEstate_{\tau+\nu} - \haodestate_{\tau+\nu}^\tau \|  = 
	\delta^\tau_1 + \beta \delta^\tau_2
	+  
	L_g \int^{\tau+\nu}_\tau  \| \haODEstate_\gamma - \haodestate_{\gamma}^\tau  \| \, d\gamma
	\]
	where 
	\[
	\begin{aligned}
\delta^\tau_1 &\eqdef \sup_{\tau' \geq \tau} \max_{0 \leq \nu \leq T^\theta}	\Big\| \int^{\tau'+\nu}_{\tau'} g(\haODEstate_\gamma,\fasttarg(\haODEstate_\gamma), \haqsaprobe_\gamma) - \barg(\haODEstate_\gamma,\fasttarg(\haODEstate_\gamma)) \,  d\gamma \Big \| 
\\
\delta^\tau_2 &\eqdef  \sup_{\tau' \geq \tau} \max_{0 \leq \nu \leq T^\theta}   \int^{\tau'+\nu}_{\tau'} \| \haDelta^g_\gamma \| 
	\end{aligned}
	\]
	An application of Gr\"onwall's inequality then gives $\| \haODEstate_{\tau+\nu} - \haodestate_{\tau+\nu}^\tau \| \leq e^{L_g \nu} (\delta^\tau_1 + \delta^\tau_2) $.
	
	To complete the proof, it remains to obtain bounds on $\delta^\tau_1, \delta^\tau_2$ as $\tau \to \infty$. An application of \Cref{t:g_vary_coupling} establishes $\lim_{\tau \to \infty}\delta^\tau_1 =0$. The second term is bounded as follows: since $\tau \geq s(T^\theta)$,
	\[
	\begin{aligned}
	\lim_{\tau \to \infty} \delta^\tau_2 = \limsup_{\tau \to \infty} \int^{\tau+ T^\theta}_{\tau} \| \haDelta^g_\gamma \|  d\gamma
	&\leq
	\limsup_{\tau \to \infty}   \bdd{t:ROC_lambda}  \int^{\tau+ T^\theta}_{\tau}    d\gamma \leq 
	 \limsup_{\tau \to \infty}  \bdd{t:ROC_lambda} T^\theta \leq  \bdd{t:Couplesol_timevary_theta}
	\end{aligned}
	\]
\end{proof}

\begin{lemma}
	\label[lemma]{t:limsuptheta}
	Under the assumptions of \Cref{t:ROC}, the following holds:,
	\[
	\limsup_{t \to \infty} 	\|\ODEstate_{t} - \theta^*  \| \leq \bdd{t:limsuptheta} \beta
	\]
\end{lemma}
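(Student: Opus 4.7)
The plan is to work in the accelerated timescale $\tau = s(t)$ defined in \eqref{e:timeTrans_fast}, so that the slow dynamics become the autonomous-like ODE \eqref{e:haODEstate}, and to compare $\haODEstate$ to restarted mean flow trajectories on fixed windows $[\tau,\tau+T^\theta]$ using \Cref{t:Couplesol_timevary_theta}. The key point is that the only $\beta$-dependent perturbation entering \eqref{e:haODEstate} is $\haDelta^g_\tau\beta$, whose magnitude is controlled by $\bdd{t:ROC_lambda}$, independently of $\tau$. This is exactly the setting of a small-gain argument around the Lyapunov function from (A6).

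More concretely, I would set $V(\theta) \eqdef \sqrt{V^\circ(\theta)}$, which by the discussion preceding \eqref{e:LipV} is Lipschitz continuous (constant $L_V$) and satisfies $\tfrac{d}{d\gamma} V(\haodestate_\gamma^\tau) \le -\delta V(\haodestate_\gamma^\tau)$ along the restarted mean flow \eqref{e:meanflow_theta}, for some $\delta>0$ determined by (A3), (A5), (A6). For a fixed window length $T^\theta>0$, \Cref{t:Couplesol_timevary_theta} yields
\begin{equation*}
\sup_{\nu\in[0,T^\theta]}\|\haODEstate_{\tau+\nu} - \haodestate_{\tau+\nu}^\tau\|\le \bdd{t:Couplesol_timevary_theta}\beta
\quad\text{for all } \tau\ge \tau_0(\beta).
\end{equation*}
Combining with Lipschitz continuity of $V$, this gives the one-step contraction
\begin{equation*}
V(\haODEstate_{\tau+T^\theta}) \le e^{-\delta T^\theta} V(\haODEstate_\tau) + L_V \bdd{t:Couplesol_timevary_theta}\beta
\quad\text{for all } \tau\ge \tau_0(\beta).
\end{equation*}

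Iterating this inequality along the sampling times $\tau_k = \tau_0 + kT^\theta$ produces a geometric recursion whose fixed point is bounded by $L_V \bdd{t:Couplesol_timevary_theta}\beta/(1-e^{-\delta T^\theta})$. Hence
$\limsup_{k\to\infty} V(\haODEstate_{\tau_k}) = O(\beta)$, and by the lower bound $V(\theta)\ge \sqrt{\delta_1}\,\|\theta-\theta^*\|$ from \eqref{e:ident1}, we obtain $\limsup_{k\to\infty}\|\haODEstate_{\tau_k}-\theta^*\| = O(\beta)$. Filling in the windows between sampling times using a standard Gr\"onwall bound on \eqref{e:haODEstate} (whose right-hand side is uniformly bounded by (A4) once $X_t$ is in the bounded set) shows the $\limsup$ bound holds without subsampling, and then undoing the time change $\tau = s(t)$, which is a bijection of $[\clT_m,\infty)$ onto a half-line since $\sum a_t = \infty$, yields $\limsup_{t\to\infty}\|\ODEstate_t-\theta^*\|\le \bdd{t:limsuptheta}\beta$.

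The main obstacle I expect is the following subtlety: \Cref{t:Couplesol_timevary_theta} only asserts the mean-flow approximation \emph{in the limit} $\tau\to\infty$, so the one-step contraction above needs to be applied from some $\tau_0(\beta)$ onward whose magnitude may depend on $\beta$ through both the $\delta^\tau_1$ (LLN) and $\delta^\tau_2$ (coupling error) terms. One must therefore choose $T^\theta$ first, large enough that $e^{-\delta T^\theta}<\tfrac12$, and only then take $\beta$ small and $\tau_0$ large; the geometric iteration is a bona fide contraction uniformly in $\beta$. A related technicality is that the Lipschitz bound $L_V$ is only valid on bounded sets, but this is supplied by (A4). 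None of this involves new ideas beyond what has been used for $\bfLambda$ in \Cref{t:ROC_lambda}---essentially the same discretization/contraction scheme is now applied on the slow timescale, using the global Lyapunov function of the mean flow rather than the fast subsystem's Lyapunov structure.
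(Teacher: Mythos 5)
There is a genuine gap in your key contraction step. The inequality $\tfrac{d}{d\gamma}V(\haodestate^\tau_\gamma)\le -\delta V(\haodestate^\tau_\gamma)$ with $V=\sqrt{V^\circ}$ is justified in the paper only under \emph{global exponential} asymptotic stability of the base ODE: that hypothesis is what produces the quadratic sandwich \eqref{e:ident1}, and \eqref{e:ident1} is exactly what converts the drift bound of (A6) into exponential decay of $V$. Under the assumptions of \Cref{t:ROC}, however, the slow mean flow $\ddt\odestate=\barg(\odestate,\fasttarg(\odestate))$ is only assumed GAS (A3) with a Hurwitz linearization at $\theta^*$ (A5), and (A6) supplies a $V^\circ$ with bounded gradient and the drift inequality $\partial_\theta V^\circ\cdot\barg(\theta,\fasttarg(\theta))\le-\|\theta-\theta^*\|^2$ --- no quadratic upper bound on $V^\circ$. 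A function behaving like $\|\theta-\theta^*\|$ near $\theta^*$ is admissible under (A6), and for it $\ddt V^\circ\le-\|\theta-\theta^*\|^2$ does \emph{not} yield $\ddt V\le-\delta V$; one only gets algebraic decay until the local (A5) regime is entered. So your one-step contraction $V(\haODEstate_{\tau+T^\theta})\le e^{-\delta T^\theta}V(\haODEstate_\tau)+L_V\bdd{t:Couplesol_timevary_theta}\beta$, and hence the geometric iteration, is not available as stated. (The paper's use of the exponential-stability converse-Lyapunov construction, via \Cref{t:Robust}, is confined to the \emph{frozen fast} dynamics, which are linear with Hurwitz $\barAff$, so exponential stability is automatic there; it is never invoked for the slow mean flow.) The argument could be repaired by a two-regime variant (GAS to reach a neighborhood of $\theta^*$, then local exponential stability from (A5)), but that is an additional idea your write-up does not contain.

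Moreover, the iteration is unnecessary: because each restarted trajectory $\haodestate^\tau$ in \eqref{e:meanflow_theta} is initialized at the current state $\haODEstate_\tau$, the coupling error from \Cref{t:Couplesol_timevary_theta} does not accumulate across windows. The paper's proof simply fixes $\epsy>0$, uses (A4) to confine $\haODEstate_\tau$ to the ball of radius $b^\bullet$, uses GAS (A3) to obtain a settling time $\clT_\epsy$ uniform over that ball, applies \Cref{t:Couplesol_timevary_theta} with window length $\clT_\epsy$ to get $\limsup_\tau\|\haODEstate_{\tau+\clT_\epsy}-\theta^*\|\le\bdd{t:Couplesol_timevary_theta}\beta+\epsy$, and lets $\epsy\downarrow0$. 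This avoids any Lyapunov contraction, any choice of $T^\theta$ versus $\beta$, and the window-filling/Gr\"onwall step you append at the end.
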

\begin{proof}
	Under (A4), there is $\clT_\bullet$ depending upon $X_0$ such that $\| X_t\| \leq b^\bullet$ for all $t \geq \clT_\bullet$. Since $\ddt \odestate_t  =  \barg(\odestate_t,\fasttarg(\odestate_t))$ is stable with equilibrium $\theta^*$ by (A3), it follows that for each $\epsy>0$, there exists $\clT_\epsy$ such that 
	\[
	\|  \odestate_{\tau+\gamma}^\tau - \theta^*  \| < \epsy \qquad \text{for all $\gamma \geq \clT_\epsy$ when $\|\odestate_\tau^\tau \|\leq b^\bullet$}
	\]
	By \Cref{t:Couplesol_timevary_theta},
	\[
	\begin{aligned}
		\limsup_{\tau \to \infty} \|  \haODEstate_{\tau+\clT_\epsy} - \theta^* \|  
		&
		\leq 
		\limsup_{\tau \to \infty} \|  \haODEstate_{\tau+\clT_\epsy} - \haodestate_{\tau+\clT_\epsy}^\tau \|  
		+  \limsup_{\tau \to \infty} \|  \haodestate_{\tau+\nu}^\tau - \theta^*  \|
		\\
		&<  \bdd{t:Couplesol_timevary_theta}\beta + \epsy
	\end{aligned}
	\]
	which completes the proof since $\epsy$ is arbitrary.

\end{proof}

With the results of \Cref{t:limsuptheta} and \Cref{t:ROC_lambda} in hands, we are ready to prove \Cref{t:ROC} and \Cref{t:ROCaveraging}.

\smallskip

\textit{Proof of \Cref{t:ROC}}
We begin by establishing part (i).
Under the assumptions of the theorem, the ODE \eqref{e:ODEstate_QSA_Gen} can be regarded as an instance of single timescale QSA as explained in \Cref{s:rates}. The algorithm takes the form \eqref{e:1timescale} in which there is $\theta^\beta \in \Re^d$ such that $\barg_0(\theta^\beta)=0$. Then, \cite[Thms. 4.15 \& 4.24]{CSRL} can be extended to the setting of this paper to establish $\| \ODEstate_t - \theta^\beta \| \leq \bdd{t:ROC} a_t$.

Moreover, \Cref{t:ROC}~(i) along with the result in \Cref{t:limsuptheta} implies $\| \theta^\beta - \theta^* \| \leq \bdd{t:ROC} \beta$.

Part (ii) follows directly from part (i), \Cref{t:ROC_lambda} and the assumed Lipschitz continuity of $\fasttarg$ in (A3):
\[
\begin{aligned}
\limsup_{t \to \infty} \| \Lambda_t - \fasttarg(\theta^*)  \| 
&\leq \limsup_{t \to \infty} \|\Lambda_t - \fasttarg(\ODEstate_t) \| + \limsup_{t \to \infty}\| \fasttarg(\ODEstate_t) - \fasttarg(\theta^*)  \| 
\\
& \leq \bdd{t:ROC_lambda} \beta +  \limsup_{t \to \infty}  \Lipfasttarg \|\ODEstate_t - \theta^* \| \leq (\bdd{t:ROC_lambda} + \Lipfasttarg \bdd{t:Couplesol_timevary_theta}) \beta
\end{aligned}
\] 
\qed

\begin{proof}[Proof of \Cref{t:ROCaveraging}]
	The proof extends arguments in \cite{laumey22d}, beginning with the
	the linearization   $   \barh (x)  =     \barAff(\theta) [\lambda -  \fasttarg(\theta) ]  +  \clE(x)$ for any $x=(\theta,\lambda)$,  with $\| \clE(x) \| \le  L_A \min(\|x\|,\|x\|^2)$ for some $L_A<\infty$.   
	The p-mean flow \eqref{e:Pmeanflow_lambda} gives 
	\[	\ddt \Lambda_t    
	=  
	\beta\big[   \barAff(\ODEstate_t) [\Lambda_t -  \fasttarg(\ODEstate_t) ]  +\clE_t     +  \clW_t \big]
	\]
	where $\clE_t = \clE(\BIGstate_t) $ and we have used $\barUpupsilonff\equiv 0$ under (A0).    
	Consequently,
	\[	
	\ddt \LambdaF_t    
	=  
	\beta\big[   \barAff(\ODEstate_t) \tilLambdaF_t    + \epsy_t + \clEF_t     +  \clWF_t \big]
	\]
	in which $\tilLambdaF_t = \LambdaF_t -  \fasttarg(\ODEstate_t)$, and
	letting $\imp$  denote the impulse response for the low pass filter \eqref{e:2nd_order_filter_FAST},
	\[
	\clEF_t  = \int_0^t \imp_{t-\tau} \clE_\tau\, d\tau\,, \quad       
	\clWF_t   = \int_0^t \imp_{t-\tau} \clW_\tau\, d\tau\,, \quad       
	\]
	The error term $\epsy_t$ depends on $X_0$:   it involves the transient response of the filter, and the vanishing term
	\[
	\int_0^t \imp_{t-\tau}  [ \barAff(\ODEstate_t) - \barAff(\ODEstate_\tau)] \tilLambdaF_\tau  \, d\tau
	\]
	Consequently $\epsy_t$  is vanishing as $t\to\infty$.     We also have
	\begin{equation}
		\begin{aligned}
			\limsup_{t \to \infty} \|   \clEF_t   \| \le  
			\limsup_{t \to \infty} \|   \clE_t   \|  &= O(\beta^2)
			\\
			\limsup_{t \to \infty} \|   \clWF_t   \|  &= O(\beta^2)  
		\end{aligned}
		\label{e:limsupsErrorsF}
	\end{equation}
	The upper bound for $\|   \clE_t   \| $ follows from
	\Cref{t:ROC} and the bound  $\| \clE_t \| \le  L_A \|\BIGstate_t\|^2$.
	
	The bandwidth constraint on the low pass filter implies the bound for   $\|  \clWF_t  \|$.  This establishes (ii),  and (i) easily follows as in the proof of \Cref{t:ROC}.
\end{proof}

\end{document}